\documentclass[11pt]{article}

\usepackage{amssymb}
\usepackage{mathrsfs}
\usepackage{latexsym,amsmath,amssymb,amsfonts,epsfig,graphicx,cite,psfrag}
\usepackage{eepic,color,colordvi,amscd}
\usepackage{ebezier}
\usepackage{verbatim}
\usepackage{algorithm}
\usepackage{algpseudocode}
\usepackage{tikz}
\usepackage{mathtools}
\usepackage{enumitem}
\usepackage[normalem]{ulem}
\usepackage{longtable}
\usepackage{booktabs}
\usepackage{multirow}
\usepackage{caption}
\usepackage[labelfont=sf]{subcaption}
\usepackage{comment}
\usepackage{xcolor}
\usepackage{extarrows}
\usepackage{float}
\usepackage{amsthm}
\usepackage{epstopdf}
\usepackage{capt-of}
\usepackage{amsopn}
\usepackage{hyperref}
\usetikzlibrary{positioning,arrows.meta,decorations.pathreplacing}
\definecolor{blue}{rgb}{0,0,0.9}
\definecolor{red}{rgb}{0.9,0,0}
\definecolor{green}{rgb}{0,0.9,0}

\DeclareMathOperator{\diag}{diag}
\DeclareMathOperator{\Diag}{Diag}
\DeclareMathOperator*{\argmin}{arg\,min}

\def\Tr{\mathrm{Tr}}

\def\<{\big\langle}
\def\>{\big\rangle}

\def\D{\mathcal{D}}

\def\A{\mathcal{A}}

\def\B{\mathcal{B}}

\def\K{\mathcal{K}}
\def\F{\mathcal{F}}
\def\Z{\mathcal{Z}}

\def\R{\mathbb{R}}

\def\S{\mathbb{S}}

\def\lin{{\operatorname{lin}}}

\def\tb#1{\textbf{#1}}

\def\Range{{\operatorname{Range}}}

\def\tol{\mathrm{tol}}
\def\timelimit{\mathrm{TimeLimit}}

\def\bQ{\overline{Q}}
\newcommand{\xdownarrow}[1]{{\left\downarrow\vbox to #1{}\right.\kern-\nulldelimiterspace}}

\theoremstyle{plain}
\newtheorem{remark}{Remark}
\newtheorem{example}{Example}[section]

\newtheorem{definition}{Definition}
\newtheorem{proposition}{Proposition}
\newtheorem{theorem}{Theorem}

\newtheorem{lemma}{Lemma}

\newcommand{\authornote}[1]{\parbox[t]{0.96\textwidth}{\raggedright #1}}

\title{Sparsity-Cone SDP Relaxations and Applications to Variable Fixing for Sparse Quadratic Programs}

\author{
Di Hou\thanks{\authornote{Department of Mathematics, National University of Singapore, Singapore 119076 (\texttt{dihou@u.nus.edu}).}}
\quad
Thai P.D. Nguyen\thanks{\authornote{Department of Mathematics, National University of Singapore, Singapore 119076 (\texttt{thai.npd@u.nus.edu}).}}
\quad
Kim-Chuan Toh\thanks{\authornote{Department of Mathematics, and Institute of Operations Research and Analytics, National University of Singapore, Singapore 119076 (\texttt{mattohkc@nus.edu.sg}).}}
\quad
Guanyi Wang\thanks{\authornote{Department of Industrial Systems Engineering and Management, National University of Singapore (\texttt{guanyi.w@nus.edu.sg}).}}
}

\date{\today}

\begin{document}

\maketitle

\begin{abstract}
Quadratic programs (QPs) with sparsity constraint are generally NP-hard, and their efficient global solution depends crucially on tractable tight convex relaxations. In this paper, we propose a sparsity-cone semidefinite programming (SC-SDP) relaxation for sparse (indefinite) QPs. Unlike standard SDP liftings, such as the SDP--RLT relaxation, which involve a $(2n+1)$-dimensional semidefinite matrix, the proposed SC-SDP formulation uses only a $(n+1)$-dimensional matrix together with a single sparsity-cone constraint $\mathcal{K}$ to handle the relaxation of the $\ell_0$-norm constraint. We prove that SC-SDP is equivalent in strength to the SDP--RLT relaxation.
We further study the sparsity cone $\mathcal K$, deriving structural characterizations and showing that projection onto $\mathcal K$ can be computed efficiently via a one-dimensional subproblem. Building on the dual of SC-SDP, we derive explicit presolving mechanisms, including a dual-fixing rule for individual variables, a screening-cut rule for excluding larger support patterns, and a dual-refinement step for improving presolving certificates. To solve the resulting relaxation SC-SDP efficiently, we develop a two-phase Riemannian-based augmented Lagrangian method and exploits the structured projection subproblems. Numerical experiments on several classes of sparse QPs show that SC-SDP preserves the bound quality of SDP--RLT while offering substantial computational advantages and practically effective presolving capabilities.
\end{abstract}

\noindent\textbf{Keywords.} sparse quadratic programming, sparsity-constrained optimization, semidefinite programming, sparsity-cone relaxation, variable fixing and screening

\bigskip

\section{Introduction}

\subsection{Sparsity-constrained quadratic programs}

In this paper, we consider the following sparsity-constrained quadratic program (QP):
\begingroup
\renewcommand{\theequation}{P}
\begin{equation}\label{eq:QP}
v^{\mathrm{P}}
\;\coloneqq\;
\min_{x}
\left\{
 x^\top Q x + 2c^\top x
\;\middle|\;
Ax=b,\; Bx\geq d,\;
\|x\|_0\le k
\right\},\tag{P}
\end{equation}
\endgroup
where \(Q \in \mathbb{S}^n\), \(c \in \mathbb{R}^n\), \(A \in \mathbb{R}^{m \times n}\), \(b \in \mathbb{R}^m\), \(B \in \mathbb{R}^{l \times n}\), \(d \in \mathbb{R}^l\), $k$ is a given integer satisfying $0 < k < n$, and \(\|x\|_0\) denotes the number of nonzero entries of \(x\). We assume that the matrix $A$ has full row rank, the feasible region of \eqref{eq:QP} is nonempty, and the objective function of \eqref{eq:QP} is bounded from below over its feasible set.
Problem \eqref{eq:QP} is more general than many sparse QPs considered in the literature because it includes both equality and inequality constraints in addition to the $\ell_0$ sparsity constraint. 
The objective function contains both quadratic and linear terms, enabling it to model a wide range of nonconvex optimization problems. In particular, the matrix $Q$ is allowed to be indefinite.
The $ \ell_0$-norm sparsity constraint also arises in various applications, such as sparse regression, compressed sensing, portfolio optimization, and feature selection. 

\eqref{eq:QP} is generally nonconvex and NP-hard due to the presence of the intractable sparsity constraint and nonconvex quadratic objective. 
To handle the sparsity constraint more effectively, 
one common approach is to
introduce additional $n$ binary variables to represent the support of $x$ and
reformulate \eqref{eq:QP} as the following mixed-binary QP: 
\begin{equation}\label{eq:MIQP}
\min_{x,z}
\left\{
 x^\top Q x + 2c^\top x
\;\middle|\;
Ax=b,\; Bx\geq d,\;
x\circ (1-z)=0,\;
e^\top z\leq k,\;
z\in \{0,1\}^n
\right\}.
\end{equation}
Problem \eqref{eq:MIQP} can then be solved globally by branch-and-bound methods \cite{land1960automatic,mitten1970branch,boyd2007branch}. At each node of the branch-and-bound tree, one solves a relaxation problem of \eqref{eq:MIQP} to obtain a lower bound. The problem is solved when the gap between the lower bound and the best known upper bound is sufficiently small. 
Consequently, it is important to develop relaxations of \eqref{eq:MIQP} that are both tight and computationally efficient. In this paper, we focus on SDP relaxations, which are known to provide tight lower bounds for \eqref{eq:MIQP} \cite{bomze2025tighter}.

\begin{remark}\label{rmk:generalization}
    The relaxation and algorithm proposed in this paper are not restricted to \eqref{eq:QP}. They also extend to sparse quadratically constrained quadratic programs (QCQPs), by directly incorporating the associated lifted constraints into the SDP relaxation \eqref{eq:intro-Decomposed-SDP} and modifying the definition of $\B(Y)$ accordingly in the subsequent subsection. In this extension, the quadratic constraint matrices may also be indefinite. For simplicity, we restrict our attention in this paper to linear constraints.
\end{remark}

\subsection{SC-SDP relaxation}\label{subsec:SDP-RLT-formulation}

The choice of relaxation is crucial for the efficiency of the branch-and-bound algorithm. A tight relaxation can significantly reduce the size of the search tree and thus improve the computational efficiency. 
Therefore, it is important to develop relaxations of \eqref{eq:MIQP} that are both tight and computationally efficient.

\medskip
\noindent\tb{SDP--RLT relaxation.} 
A commonly used convex relaxation of \eqref{eq:MIQP} is the SDP relaxation \cite{shor1990dual}, which is obtained by replacing the quadratic term with a matrix and introducing a positive semidefinite constraint. The explicit formulation of the SDP relaxation \eqref{eq:Shor} of \eqref{eq:MIQP} is given in Subsection \ref{subsec:relaxation-formulation}. 
Although the SDP relaxation can be tight under certain conditions \cite{burer2020exact,wang2022tightness}, it typically provides weak lower bounds for most instances of \eqref{eq:MIQP}. In some cases, the bound can even be negative infinity.

To strengthen the SDP relaxation and improve the bound quality, we incorporate constraints from the reformulation-linearization technique (RLT) \cite{sherali2007rlt,RiNNALplus} and obtain the SDP--RLT relaxation, which typically provides a significantly tighter bound than the standard SDP relaxation alone. However, for large-scale problems, it becomes computationally prohibitive because it must lift both the continuous variables $x$ and the support variables $z$, leading to a semidefinite matrix indexed by $[1;x;z]$ of dimension $2n+1$, together with a large number of RLT constraints.
{For sparse QPs, the $(2n+1)$-dimensional SDP--RLT relaxation can be reduced to an $(n+1)$-dimensional SDP relaxation with $n$ additional SOCP constraints. To the best of our knowledge, however, these SOCP constraints do not readily yield explicit presolving rules for variable fixing.}

\medskip

\noindent\tb{SC-SDP relaxation.} 
{To reduce the computational burden of the SDP--RLT relaxation of \eqref{eq:MIQP} and eliminate the explicit support variables $z$ and the associated SOCP constraints in its reduced form,} we propose a sparsity-cone SDP relaxation. Standard SDP formulations for \eqref{eq:MIQP} typically employ a lifted matrix involving both the $x$- and $z$-variables, thereby effectively doubling the dimension of the lifted space. Our objective is to construct a new SDP relaxation of comparable strength without this dimensional increase. The compact relaxation keeps only the lifted matrix associated with $[1;x]$
\begin{equation}\label{eq:intro-Y}
    Y
\;\coloneq\; 
\begin{bmatrix}
    Y_{11} &Y_{12}\\Y_{21} & Y_{22}
\end{bmatrix}
\;=\;
\begin{bmatrix}
    1&x^\top \\
    x&X
\end{bmatrix}\succeq 0,
\end{equation}
as its semidefinite variable. Using this matrix, the RLT constraints derived from the original linear constraints can be written compactly as
\begin{equation}\label{eq:intro-RLT}
    \A(Y)\coloneqq\begin{bmatrix}
    -b&A
    \end{bmatrix}
    Y\begin{bmatrix}
        -b^\top\\A^\top
    \end{bmatrix}=0,\quad
    \B(Y)\coloneqq  \begin{bmatrix}
     1&0
        \\-d&B
    \end{bmatrix}
    Y\begin{bmatrix}
        1&0
        \\-d&B
    \end{bmatrix}^\top\geq 0.
\end{equation}
These are exactly the RLT constraints obtained from the original linear constraints in the $[1;x]$ lifting. {To replace the support variables and SOCP constraints in the reduced SDP--RLT form while preserving the sparsity information}, we introduce the sparsity-cone
\begin{equation}\label{eq:intro-K-set}
    \K\coloneq \left\{Y\in\S^{n+1}: \D(Y)\coloneq \begin{bmatrix}
    k Y_{11}&Y_{12}\\Y_{21} & \Diag(Y_{22})
\end{bmatrix}\succeq 0\right\}.
\end{equation}
The cone $\K$ encodes the sparsity structure of the original problem. More precisely, it is exact on normalized rank-one lifts: for $Y=[1;x][1;x]^\top$, the membership condition $Y\in\K$ is equivalent to the cardinality bound $\|x\|_0\le k$; see Theorem~\ref{thm:equivalence-K}.
Combining the RLT constraints \eqref{eq:intro-RLT} with the cone constraint, we obtain the following sparsity-cone semidefinite (SC-SDP) relaxation of \eqref{eq:QP}:
\begingroup
\renewcommand{\theequation}{Q}
\begin{equation}
\label{eq:intro-Decomposed-SDP}
\min_{Y}\left\{
\langle \bQ,Y \rangle  \;\middle|\;  
 \A(Y)=0,\; \B(Y)\geq 0,\; 
Y_{11}=1,\;
 Y\in \K\cap \S^{n+1}_+
\right\},\tag{Q}
\end{equation}
\endgroup
where the cost matrix $\bQ\coloneqq [ 0, c^\top; c, Q]$. 
Compared with the SDP--RLT relaxation of \eqref{eq:MIQP}, 
our new relaxation \eqref{eq:intro-Decomposed-SDP} has several key advantages:
\begin{enumerate}[itemsep=2pt, topsep=4pt]
    \item \eqref{eq:intro-Decomposed-SDP} preserves the relaxation strength of the SDP--RLT relaxation of \eqref{eq:MIQP}. Indeed, Theorem~\ref{thm:equivalence} shows that the two relaxations are equivalent, so \eqref{eq:intro-Decomposed-SDP} yields exactly the same lower bound for \eqref{eq:MIQP} without any loss in bound quality.
    \item \eqref{eq:intro-Decomposed-SDP} is substantially more compact. It uses a single semidefinite matrix of size $n+1$ together with the cone constraint $Y\in\K$, whereas the SDP--RLT relaxation of \eqref{eq:MIQP} uses a semidefinite matrix of size $2n+1$ together with the RLT constraints in \eqref{eq:intro-RLT}, as well as the 
    additional constraints that encode the sparsity constraint based on the variable $z$. 
    This reduction leads to a cleaner formulation and significantly lowers the lifted dimension. Numerically, SC-SDP also tends to yield much lower-rank solutions than the SDP--RLT model, which further supports its computational advantage.
    \item The cone $\K$ in \eqref{eq:intro-K-set} is not only a modeling device but also computationally tractable. As shown in Section~\ref{sec:cone-geometry}, the projection onto $\K$ admits a reduction to a one-dimensional subproblem. Moreover, the Moreau decomposition allows the algorithm to recover the multiplier update associated with $\K^*$ from this projection. This structure is essential for the first-order method developed in Section~\ref{sec:algorithm} to efficiently solve the SC-SDP 
    relaxation.
    \item Another important advantage of \eqref{eq:intro-Decomposed-SDP}
    is that its dual has a simple structure that allows us to derive explicit presolving rules for the original mixed-binary formulation \eqref{eq:MIQP}. In particular, we show that the dual solution of \eqref{eq:intro-Decomposed-SDP} yields a dual-fixing rule for individual variables, a screening-cut rule for excluding larger support patterns. Moreover, based on the dual solution, we can develop
    a dual-refinement step for improving the fixing and screening certificates. To the best of our knowledge, this provides the first explicit fixing mechanism derived from an SDP relaxation for general sparsity-constrained quadratic programs with linear constraints.
\end{enumerate}

\subsection{Presolving with SC-SDP}

For sparsity-constrained problems such as \eqref{eq:QP} and \eqref{eq:MIQP}, the main combinatorial difficulty is to identify the support of the solution. This makes presolving especially valuable in branch-and-bound methods: fixing variables, or more generally ruling out 
certain support patterns before branching, can substantially reduce the search tree. Accordingly, a useful relaxation such as \eqref{eq:intro-Decomposed-SDP} should do more than provide a strong lower bound; it should also reveal structural information that can be turned into explicit fixing and screening certificates.

\medskip
\noindent\tb{Related work.}
Classical presolving methods based on LP relaxations \cite{achterberg2020presolve,gamrath2015progress} provide a rich toolbox of fixing techniques, but their effectiveness is limited by the weakness of the underlying relaxations. Stronger relaxations have therefore been explored in more specialized settings. In sparse ridge regression, perspective-based methods are substantially stronger and have produced effective fixing and screening rules \cite{atamturk2020safe,tan2025screening}, but they remain restrictive.
First, they typically require strong convexity of the objective function with $Q\succ 0$, and their improvement depends on the strength of that convexity. Second, they essentially apply only to problems without additional constraints. In contrast, our SDP-based relaxations such as \eqref{eq:intro-Decomposed-SDP} apply to more general sparsity-constrained quadratic problems 
with additional linear equality/inequality constraints without
requiring $Q$ to be positive definite, and they
can provide strong lower bounds. 
However, effective fixing rules derived from such SDP relaxations are 
currently still unexplored. Although related questions have been studied in the mixed-integer SDP literature \cite{gally2018framework,matter2023presolving}, presolving remains difficult there as well, and the resulting techniques do not directly extend to the explicit fixing rules developed here.

\medskip

The special structure of the SC-SDP relaxation \eqref{eq:intro-Decomposed-SDP} helps bridge this gap. In Appendix~\ref{app:support-restrictions}, we explain how SC-SDP can be used under support restrictions, with strong branching as a special case. Subsection~\ref{subsec:variable-fixing}
then develops a dual-fixing rule for individual variables, Subsection~\ref{subsec:screening-cuts} derives screening cuts for larger support patterns, and Subsection~\ref{subsec:dual-refinement} introduces a dual-refinement step for strengthening the resulting fixing and screening certificates. 
We briefly explain the fixing and screening rules below. 

\medskip
\noindent\tb{Dual fixing.}
Let $z$ be the binary vector in \eqref{eq:MIQP}, and let $z_p^*$ be the $p$-th component of an optimal solution of \eqref{eq:MIQP}. Given a dual solution of \eqref{eq:dual-Decomposed-SDP}, define the scores $w_i$ as in Theorem~\ref{thm:dual-fixing}, and let $w_{[i]}$ denote the $i$-th largest value in $\{w_1,\dots,w_n\}$. If $v^{\mathrm{UB}}$ is an upper bound on the optimal value and $v^{\mathrm{LB}}$ is the lower bound produced by \eqref{eq:intro-Decomposed-SDP}, then Theorem~\ref{thm:dual-fixing} yields the rule
\[
\begin{aligned}
z_p^*=0
&\quad\text{if }w_{[k]}-w_p>v^{\mathrm{UB}}-v^{\mathrm{LB}},\\
z_p^*=1
&\quad\text{if }w_p-w_{[k+1]}>v^{\mathrm{UB}}-v^{\mathrm{LB}}.
\end{aligned}
\]

\medskip
\noindent\tb{Screening cuts.}
When single-variable fixing is not sufficient, the same dual certificate can be used to rule out larger support patterns. For an SCG-tuple $(S,N,C)$ defined in Definition~\ref{def:scg-tuple}, where $S$ and $N$ are candidate support and non-support sets and $C$ is the associated complement support set, Theorem~\ref{thm:screening-cuts} gives
\begin{equation*}
\sum_{i=1}^k w_{[i]}-\sum_{i\in S}w_i-\sum_{i\in C}w_i
>v^{\mathrm{UB}}-v^{\mathrm{LB}}
\quad \Longrightarrow\quad
\sum_{i\in S} z_i+\sum_{i\in N}(1-z_i)
\le |S|+|N|-1.
\end{equation*}

\medskip
\noindent\tb{Dual refinement.}
Because the optimal dual solution of \eqref{eq:intro-Decomposed-SDP} is generally not unique, different optimal solutions may yield the same lower bound but different fixing and screening performance. We therefore introduce a dual-refinement step, which keeps the SC-SDP bound unchanged while selecting a more effective presolving certificate, and derive the compact $(3n+1)$-variable semidefinite--second-order-cone representable formulation \eqref{eq:dual-refinement-uqp-reduced} by using the arrow representation of $\D^*(W)$.

\medskip

The presolving rules derived from the SC-SDP relaxation are explicit and practical, and they can be applied at the root node or throughout the branch-and-bound tree to substantially reduce the search space. To the best of our knowledge, this provides the first explicit fixing mechanism derived from an SDP relaxation for general sparsity-constrained quadratic programs with linear constraints.

\subsection{Low-rank augmented Lagrangian method}

Although the SC-SDP relaxation \eqref{eq:intro-Decomposed-SDP} is substantially more compact than the SDP--RLT formulation, solving it efficiently at scale remains challenging. The main difficulties are as follows:
\begin{enumerate}[itemsep=2pt, topsep=4pt]
    \item A huge number of $\Omega(m^2)$ equality constraints;
    \item A large number of $\Omega(l^2)$ inequality constraints;
    \item A nontrivial conic constraint $Y\in\K$;
    \item Failure of Slater’s condition and strong duality whenever $A\not=0$.
\end{enumerate}
In this paper, we propose RiNNAL-S, a low-rank augmented Lagrangian method (ALM) for solving the SC-SDP relaxation \eqref{eq:intro-Decomposed-SDP}. Our method builds on the RiNNAL framework and its variants developed in \cite{RiNNAL,RiNNALplus,RiNNALPOP}, but extends them to handle the SC-SDP formulation with the new cone constraint $\K$.
More specifically, we rewrite \eqref{eq:intro-Decomposed-SDP} in the splitting form:
\begin{equation}
\label{eq:intro-splitting}
\min_{Y,Z}\left\{
\langle \bQ,Y\rangle+\delta_{\F}(Y)+\delta_{\K}(Z)
\;\middle|\;
\B(Y)\ge 0,\; Y-Z=0
\right\}
\end{equation}
where 
\[
\F=\{Y\in\S^{n+1}_+:\A(Y)=0,\;Y_{11}=1,\;Y\succeq 0\}.
\]
Let $\Lambda\ge 0$ and $W\in\K^*$ be the multipliers associated with $\B(Y)\ge 0$ and $Y-Z=0$, respectively, and let $\sigma>0$ be the penalty parameter. Then each outer ALM iteration solves a subproblem of the form
\small
\begin{equation}
\label{eq:intro-ALM-subproblem}
\min_{Y}\; \left\{
    L_\sigma(Y,\Lambda,W)
\coloneq \langle \bQ,Y\rangle
+\frac{\sigma}{2}\|(\sigma^{-1}\Lambda-\B(Y))_+\|^2
+\frac{\sigma}{2}\|\Pi_{\K^*}(\sigma^{-1}W-Y)\|^2
\;\middle|\;
Y\in\F
\right\}
\end{equation}
\normalsize
followed by the multiplier updates for $\Lambda$ and $W$. This ALM framework guarantees global convergence of the proposed method, so the main remaining issue is how to solve the ALM subproblem \eqref{eq:intro-ALM-subproblem} efficiently by exploiting the structure of $\F$ and $\K$. As shown in Section~\ref{sec:cone-geometry} and Subsection~\ref{subsec:projection-F}, projections onto $\F$ and $\K$ can be computed efficiently by solving one-dimensional subproblems. To solve \eqref{eq:intro-ALM-subproblem} efficiently, we alternately use two complementary phases.

\medskip
\noindent\tb{Low-rank phase.}
Suppose the ALM subproblem \eqref{eq:intro-ALM-subproblem} admits an optimal solution of
rank $r$. To exploit this potential low-rank structure, we apply the following low-rank factorization
\[
Y=\widehat R\widehat R^\top,
\qquad
\widehat R=\begin{bmatrix}e_1^\top\\ R\end{bmatrix},
\qquad
R\in\R^{n\times r},
\]
where the first row can be fixed as $e_1^\top$ so that $Y_{11}=1$ holds. 
Thus, solving \eqref{eq:intro-ALM-subproblem} is equivalent to solving the following nonconvex problem:
\begin{equation}\label{eq:intro-low-rank-subproblem}
\min_{R}\;\left\{ L_\sigma(\widehat R\widehat R^\top
,\Lambda,W)\mid \widehat R\widehat R^\top\in \F \right\}.
\end{equation}
Directly handling the nonconvex quadratic constraint in \eqref{eq:intro-low-rank-subproblem} is difficult, but the special structure of $\F$ allows us to rewrite this constraint as a simple linear constraint $AR=be_1^\top$. The detailed low-rank model and algorithm are given in Subsection~\ref{subsec:low-rank}.

\medskip
\noindent\tb{Convex lifting phase.}
The low-rank phase reduces the objective value of \eqref{eq:intro-low-rank-subproblem}, but does not guarantee global convergence due to its nonconvexity. To restore global convergence, we lift the low-rank solution back to the original convex space \eqref{eq:intro-ALM-subproblem} and perform a projected gradient step
\[
Y^+=\Pi_{\F}\bigl(Y-\eta\nabla_Y L_\sigma(Y,\Lambda,W)\bigr).
\]
On the one hand, this step guarantees global convergence for the ALM subproblem. On the other hand, it automatically identifies a suitable rank and an initial point for the next low-rank phase. The convex lifting procedure is presented in detail in Subsection~\ref{subsec:convex-lifting}.

\subsection{Summary of our contributions}

The main contributions of this paper can be summarized as follows.

\begin{enumerate}[leftmargin=*, itemsep=2pt, topsep=4pt]
    \item We introduce a reduced SDP--RLT relaxation and propose the sparsity-cone semidefinite relaxation \eqref{eq:intro-Decomposed-SDP} for sparsity-constrained quadratic programs \eqref{eq:MIQP}. These formulations retain the tightness of the classical SDP--RLT relaxation while reducing the main lifted matrix dimension from $2n+1$ to $n+1$, with SC-SDP further replacing the explicit support variables and perspective LMIs by a single structured cone constraint $\K$.

    \item We establish the geometric and algorithmic foundations of the sparsity cone $\K$ and its dual cone $\K^*$. In particular, we derive equivalent descriptions and explicit characterizations of these cones and show that projection onto $\K$ admits an efficient one-dimensional reduction. We also use the Moreau decomposition to recover the $\K^*$ multiplier update from the $\K$ projection.

    \item We derive explicit presolving rules from the dual solution of SC-SDP for the original mixed-binary formulation \eqref{eq:MIQP}. In particular, we show that SC-SDP can provide more informative support-restriction bounds, with strong branching as a special case, establish a dual-fixing rule for individual variables and a screening-cut rule for excluding larger support patterns. We also introduce a dual-refinement step for improving the resulting fixing and screening certificates, thereby reducing the search space before branching.

    \item We develop RiNNAL-S, a two-phase augmented Lagrangian method for solving the SC-SDP relaxation. In the first phase, RiNNAL-S solves a low-rank ALM subproblem in factorized form under the affine constraint induced by $\F$. In the second phase, it performs a convex lifting step through a projected gradient update in the full matrix space. The method exploits the structured projection subproblems associated with $\F$ and $\K$, automatically identifies a suitable rank for the low-rank phase and guarantees global convergence for solving the SC-SDP relaxation.
    
    \item We illustrate the scope of the SC-SDP framework in the unconstrained sparse ridge regression setting. In particular, we show that SC-SDP dominates the standard perspective relaxation and matches the optimal perspective relaxation.
    
    \item We evaluate the performance of our \eqref{eq:intro-Decomposed-SDP} framework by
    conducting extensive computational experiments on several classes of $\ell_0$-constrained quadratic optimization problems. The results show that SC-SDP preserves the bound quality of SDP--RLT while delivering significant computational advantages, and often yields much lower-rank solutions in practice. Moreover, 
    \eqref{eq:intro-Decomposed-SDP} can offer strong variable fixing and 
    screening capability for utilization within a branch-and-bound
    solver. 
\end{enumerate}

\subsection{Organization}

The rest of this paper is organized as follows. Section~\ref{sec:cone-geometry} studies the sparsity cone $\K$, the dual cone $\K^*$, and the associated projection problem onto $\K$. Section~\ref{sec:sc-sdp} presents the reduced SDP--RLT and SC-SDP relaxations and studies the perspective equivalence in an unconstrained sparse ridge regression setting. Section~\ref{sec:presolving} develops the presolving framework derived from the SC-SDP dual structure. Section~\ref{sec:algorithm} develops the algorithmic framework for solving SC-SDP. Section~\ref{sec:numerical} reports numerical experiments on several classes of sparse quadratic optimization problems.

\subsection{Notations}

Let $\langle A,B\rangle := \Tr\left(AB^\top\right)$ denote the matrix inner product and $\|\cdot\|$ be its induced Frobenius norm in $\S^n$. Define $e$ as a column vector of all ones, and $e_1$ as a column vector with $1$ as its first entry and zero otherwise. We use $\circ$ to denote the element-wise multiplication operation between two vectors or matrices of the same size. Let $[n]:=\{1,2,\ldots,n\}$ for any positive integer $n$. For $x\in\R^n$, we write $\operatorname{supp}(x):=\{j\in[n]:x_j\neq 0\}$ for its support. For nonnegative scalars $x,t$, the quotient $x/t$ is understood in the extended-real sense: $x/0:=0$ if $x=0$, and $x/0:=+\infty$ if $x>0$. 

For two matrices $U,V$ having the same number of columns, we use
$[U;V]$ to denote the matrix obtained by appending $V$ to the last row
of $U$.
For a matrix $X\in\S^{n+1}$, we denote its block decomposition as follows:
\begin{equation}
X=
\begin{bmatrix}
X_{11} & X_{12}\\
X_{21} & X_{22}
\end{bmatrix}
\in
\begin{bmatrix}
\R & \R^{1\times n}\\
\R^{n\times 1} & \S^n
\end{bmatrix}.
\end{equation}
We write
\begin{equation}
\label{eq:nonnegative-cone}
\mathcal{N}:=\{Y\in\S^{n+1}:Y\ge 0\}
\end{equation}
for the cone of symmetric entrywise nonnegative matrices of size $n+1$.
For any scalar, vector, or matrix $U$, we write $U_+$ for its entrywise
positive part.
For a matrix $R\in\R^{n\times r}$, we let $R_i\in\R^{1\times r}$ be its $i$-th row, and define $\widehat{R}:=[e_1^\top;R]$, which augments $R$ with the first standard basis row vector.

\section{Sparsity cones}\label{sec:cone-geometry}

This section develops the geometric and algorithmic foundations for the sparsity cones $\K$ and $\K^*$. We first study the geometric properties of $\K$ in Subsection~\ref{subsec:property-K}, then derive efficient projection methods onto $\K$ and its nonnegative variant in Subsection~\ref{subsec:projection-K}. We next analyze the dual cone $\K^*$ in Subsection~\ref{subsec:property-Kstar}.

\subsection{Geometric properties of \texorpdfstring{$\K$}{K}}\label{subsec:property-K}

For a lifted matrix $Y=[1,x^\top; x, X]$, the cone $\K$ only couples the first row and column with the diagonal of $X$. This observation leads to a simple structural description of $\K$. In particular,
\begin{equation*}
\lin(\K)
=\K\cap(-\K)=\{Y\in\S^{n+1}:\; Y_{11}=0,\ Y_{12}=0,\ \Diag(Y_{22})=0\},
\end{equation*}
so $\K$ is not pointed. Nevertheless, $\K$ still admits useful characterizations and efficient projection methods.
The following lemma provides several equivalent characterizations of $\K$ for matrices with $Y_{11}=1$.

\begin{lemma}
\label{lemma:equivalence}
The following equivalence holds for any $Y=[1,x^\top; x, X]\in\S^{n+1}_+$:
\begin{equation}\label{eq:kDiag-xx-general}
   Y\in\K
   \;\; \Longleftrightarrow\;\;
    \D(Y)\succeq 0
\;\; \Longleftrightarrow\;\;
k\Diag(X)\succeq xx^\top 
\;\; \Longleftrightarrow\;\;
    \sum_{i=1}^n \frac{x_i^2}{X_{ii}}\leq k.
\end{equation}
Note that for a nonnegative scalar $\alpha$, we define 
$\alpha/0 = 0$ if $\alpha = 0$, and $\alpha/0 = +\infty$ if $\alpha > 0$.
\end{lemma}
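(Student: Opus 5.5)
The first equivalence is just the definition of $\K$ in \eqref{eq:intro-K-set}. For the second, I would apply a Schur complement to
\[
\D(Y)=\begin{bmatrix} k & x^\top\\ x & \Diag(X)\end{bmatrix},
\]
taking the complement with respect to the scalar block $k>0$. This gives $\D(Y)\succeq 0$ if and only if $\Diag(X)-\tfrac1k xx^\top\succeq 0$, that is, $k\Diag(X)\succeq xx^\top$. Since $k\ge 1$ is strictly positive, the scalar-pivot Schur complement is valid without any generalized-inverse caveat.

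For the third equivalence I would use that $Y\succeq 0$ forces $X_{ii}\ge 0$ for every $i$, and that $X_{ii}=0$ forces $x_i=0$, because the $2\times 2$ principal minor $X_{ii}-x_i^2\ge 0$. Let $I=\{i: X_{ii}>0\}$. Indices outside $I$ have $x_i=0$ and $X_{ii}=0$, so they contribute zero rows and columns to $k\Diag(X)-xx^\top$ and contribute $0/0=0$ to the sum under the stated convention. They can therefore be discarded.

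On $I$, I would set $D=\Diag(X_I)\succ 0$. Then $kD\succeq x_Ix_I^\top$ holds if and only if $k I\succeq D^{-1/2}x_Ix_I^\top D^{-1/2}$. The right-hand side is rank one with largest eigenvalue $\|D^{-1/2}x_I\|^2=\sum_{i\in I}x_i^2/X_{ii}$, so the condition is exactly $\sum_i x_i^2/X_{ii}\le k$. Equivalently, I could apply the Schur complement the other way, giving $k-x_I^\top D^{-1}x_I\ge 0$, which yields the same conclusion.

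The only delicate point is the handling of zero diagonal entries and the $\alpha/0$ convention. Because $Y\succeq 0$ rules out the case $x_i\ne 0$ with $X_{ii}=0$, the value $+\infty$ never actually occurs under the hypotheses. The convention is included only for consistency with uses of the formula outside $\S^{n+1}_+$. The rest of the argument is routine.
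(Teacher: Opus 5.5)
Your proof is correct and follows essentially the same route as the paper. The second equivalence is a scalar-pivot Schur complement, and the last one uses the fact that $Y\succeq 0$ forces $x_i=0$ whenever $X_{ii}=0$, followed by a Schur-complement argument on the diagonal block. The only cosmetic difference is that you discard the indices with $X_{ii}=0$ and work with $D\succ 0$, whereas the paper keeps them and applies the generalized Schur complement with $\Diag(X)^\dagger$ and the range condition $x\in\Range(\Diag(X))$.
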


\begin{proof}\;
The equivalence $Y\in\K \Longleftrightarrow \D(Y)\succeq 0$ is exactly the definition of $\K$ in \eqref{eq:intro-K-set}. For $Y=[1,x^\top; x, X]$, we have $\D(Y)=[k,x^\top; x, \Diag(X)]$. Because $k>0$, applying the Schur complement to this matrix yields
 $\D(Y)\succeq 0 \Longleftrightarrow \Diag(X)-k^{-1}xx^\top\succeq 0 \Longleftrightarrow k\Diag(X)-xx^\top\succeq 0$.
This proves the second equivalence.
It remains to show that $\D(Y)\succeq 0 \Longleftrightarrow \sum_{i=1}^n x_i^2/X_{ii}\le k$.
Since $Y\succeq 0$, each principal $2\times 2$ submatrix $[1,x_i; x_i, X_{ii}]$ is positive semidefinite. Hence $X_{ii}\ge 0$ for every $i$, and $X_{ii}=0$ implies $x_i=0$. Therefore $x\in\Range(\Diag(X))$. Define $\Diag(X)^\dagger=\Diag(d)$ by setting $d_i=1/X_{ii}$ if $X_{ii}>0$ and $d_i=0$ if $X_{ii}=0$. Then
 $x^\top \Diag(X)^\dagger x = \sum_{i=1}^n x_i^2/X_{ii}$,
where the terms with $X_{ii}=0$ are zero because then $x_i=0$.
Now apply the generalized Schur complement to the matrix $[k,x^\top; x, \Diag(X)]$. This shows that
 $\D(Y)\succeq 0 \Longleftrightarrow x\in\Range(\Diag(X)) \text{ and } x^\top \Diag(X)^\dagger x\le k$.
Since we already know that $x\in\Range(\Diag(X))$, this reduces to the third equivalence.
\end{proof}

The next result shows that, on rank-one lifted points, membership in $\K$ exactly captures the underlying $\ell_0$-sparsity property.
\begin{theorem}
\label{thm:equivalence-K}
Let $Y=yy^\top$ with $y=(t,x)\in\R\times\R^n$. Then $Y\in\K$ if and only if $t=0$ or $\|x\|_0\le k$. In particular, for any $x\in\R^n$, we have $[1; x][1; x]^\top\in\K$ if and only if $\|x\|_0\le k$.
\end{theorem}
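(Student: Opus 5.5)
I will compute $\D(Y)$ explicitly for $Y=yy^\top$ and then split into the two cases $t=0$ and $t\neq 0$. With $y=(t,x)$ we have $Y_{11}=t^2$, $Y_{12}=t x^\top$, and $\Diag(Y_{22})=\Diag(x\circ x)$, so
\[
\D(Y)=\begin{bmatrix} k t^2 & t x^\top\\ t x & \Diag(x\circ x)\end{bmatrix}.
\]

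\textbf{Case $t=0$.} Here $\D(Y)=\begin{bmatrix}0&0\\0&\Diag(x\circ x)\end{bmatrix}$, which is positive semidefinite. Hence $Y\in\K$ always, matching the claim.

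\textbf{Case $t\neq 0$.} Since $\K$ is a cone and $Y=t^2\,[1;x/t][1;x/t]^\top$, and scaling $x$ by $1/t$ does not change its support, it suffices to treat $t=1$. Then $Y=[1;x][1;x]^\top\in\S^{n+1}_+$ has $Y_{11}=1$, and Lemma~\ref{lemma:equivalence} applies with $X=xx^\top$, so $X_{ii}=x_i^2$. By the last equivalence in \eqref{eq:kDiag-xx-general}, $Y\in\K$ if and only if $\sum_{i} x_i^2/x_i^2\le k$. Under the paper's convention, the term $x_i^2/x_i^2$ equals $1$ when $x_i\neq0$ and $0/0=0$ when $x_i=0$. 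So the sum is exactly $\|x\|_0$, and the condition becomes $\|x\|_0\le k$. The ``in particular'' statement is the case $t=1$.

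To be safe about the scaling step, I can alternatively verify $\D(Y)\succeq0$ directly for general $t\neq0$. The Schur complement with respect to the positive entry $kt^2$ gives the condition $\Diag(x\circ x)-k^{-1}xx^\top\succeq0$, and $t$ cancels. Both routes then reduce to the same check: $k\Diag(x\circ x)\succeq xx^\top$ holds if and only if $\sum_{i\in\operatorname{supp}(x)}1\le k$.

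The main obstacle is the degenerate bookkeeping for indices with $x_i=0$. These are handled by the convention $0/0=0$ in Lemma~\ref{lemma:equivalence}. Equivalently, one can restrict to the support $S=\operatorname{supp}(x)$, where $\Diag(x_S\circ x_S)\succ0$. There the condition $x_S^\top\Diag(x_S\circ x_S)^{-1}x_S=|S|\le k$ follows from the ordinary Schur complement, and the rows and columns indexed outside $S$ are identically zero.
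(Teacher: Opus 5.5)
Your proposal is correct and follows the paper's proof: the case $t=0$ gives the positive semidefinite matrix $\Diag(0,x_1^2,\ldots,x_n^2)$, and for $t\neq0$ you use the cone property to normalize to $[1;x/t][1;x/t]^\top$ and apply Lemma~\ref{lemma:equivalence} with $X_{ii}=x_i^2$ and the convention $0/0=0$. Your extra check, via the Schur complement restricted to $\operatorname{supp}(x)$, is a valid direct confirmation of the same condition.
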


\begin{proof}\;
If $t=0$, then $\D(Y)=\Diag(0,x_1^2,\ldots,x_n^2)\succeq 0$, so $Y\in\K$. Now assume that $t\neq 0$ and set $u=x/t$. Since $\K$ is a cone,
\[
Y\in\K
\quad\Longleftrightarrow\quad
t^{-2}Y=
\begin{bmatrix}
1 & u^\top\\
u & uu^\top
\end{bmatrix}\in\K.
\]
Applying Lemma~\ref{lemma:equivalence} to the normalized matrix $t^{-2}Y$, we obtain
\[
\begin{bmatrix}
1 & u^\top\\
u & uu^\top
\end{bmatrix}\in\K
\quad\Longleftrightarrow\quad
\sum_{i=1}^n \frac{u_i^2}{u_i^2}\le k.
\]
The right-hand side is exactly $\|u\|_0=\|x\|_0\le k$. This proves both statements.
\end{proof}

We use the following example to illustrate the geometry of $\K\cap\S^{n+1}_+$ and the relationship between the $\K$-constraint and the semidefinite constraint.
\begin{example}[A slice of $\K\cap\S_+^3$]
Consider the case $n=2$ and $k=1$, and fix
\[
Y=
\begin{bmatrix}
1 & x_1 & x_2\\
x_1 & 1 & X_{12}\\
x_2 & X_{12} & 1
\end{bmatrix}.
\]
Then Lemma~\ref{lemma:equivalence} gives
$
Y\in\K
\;\Longleftrightarrow\;
x_1^2+x_2^2\le 1,
$
so the $\K$-constraint is a cylinder in the $(x_1,x_2,X_{12})$-space. On the other hand,
\[
Y\succeq0
\quad\Longleftrightarrow\quad
X_{12}\in\left[
x_1x_2-\sqrt{(1-x_1^2)(1-x_2^2)},\,
x_1x_2+\sqrt{(1-x_1^2)(1-x_2^2)}
\right].
\]
Hence this slice of $\K\cap\S_+^3$ is obtained by intersecting the cylinder induced by $\K$ with the region between the two semidefinite boundary sheets; see Figure~\ref{fig:geometry-k-cap-s-plus}(\subref{fig:geometry-k-cap-s-plus-a}) for the cylinder and semidefinite boundary, and Figure~\ref{fig:geometry-k-cap-s-plus}(\subref{fig:geometry-k-cap-s-plus-b}) for the resulting intersection region.

\begin{figure}[ht!]
    \centering
    \begin{subfigure}[t]{0.35\textwidth}
        \centering
        \includegraphics[width=\textwidth]{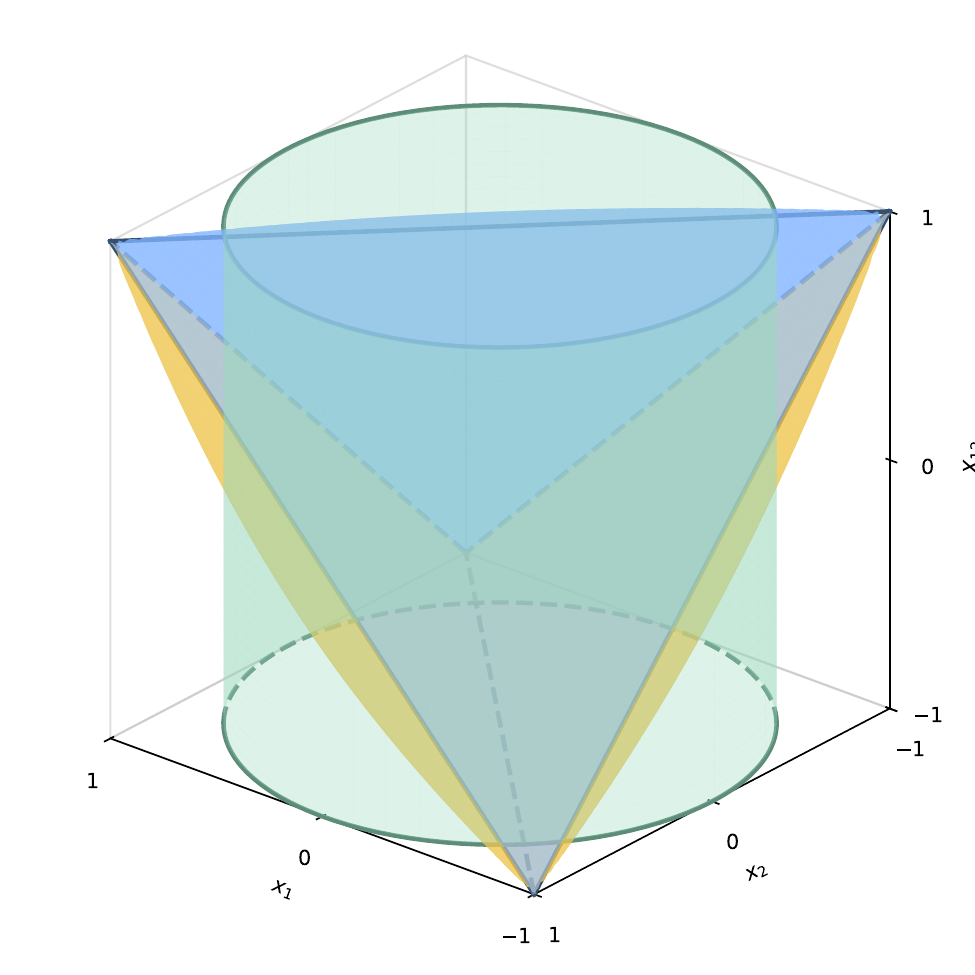}
        \caption{Cylinder and semidefinite boundary.}
        \label{fig:geometry-k-cap-s-plus-a}
    \end{subfigure}
    \qquad\qquad
    \begin{subfigure}[t]{0.35\textwidth}
        \centering
        \includegraphics[width=\textwidth]{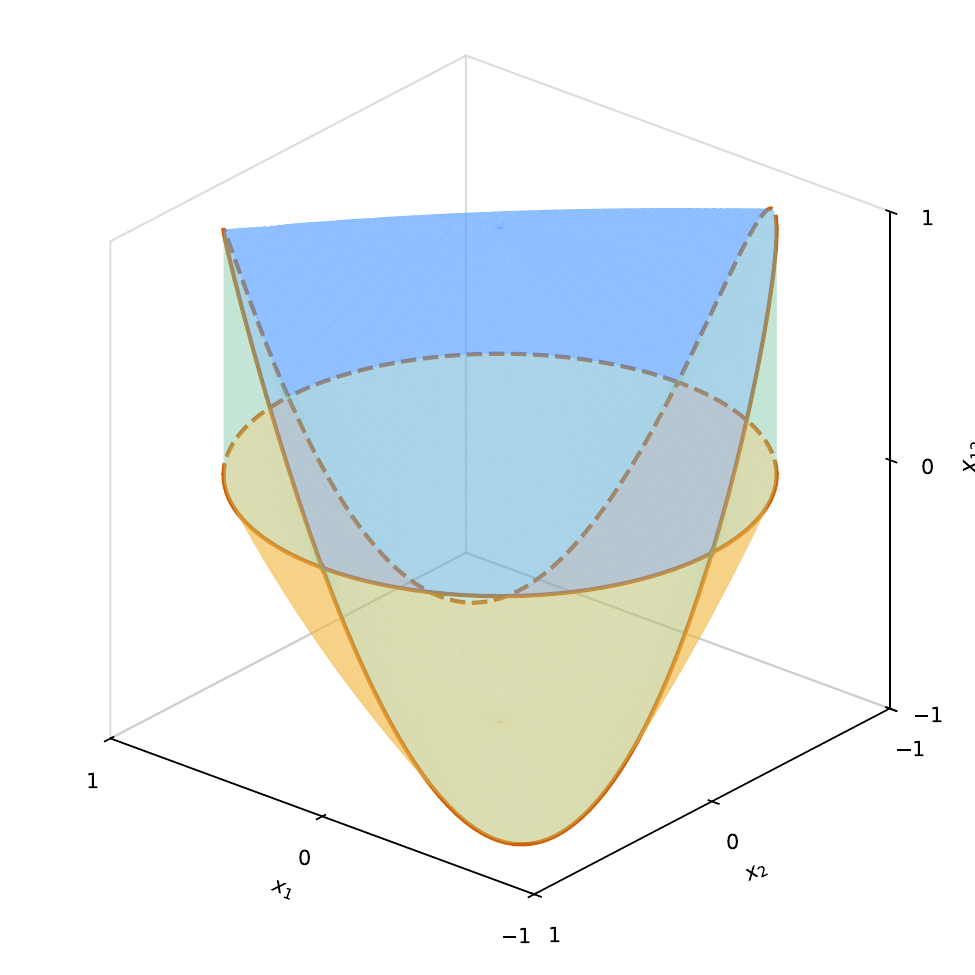}
        \caption{Intersection region.}
        \label{fig:geometry-k-cap-s-plus-b}
    \end{subfigure}
    \caption{Geometry of $\K\cap\S_+^3$ for $n=2$ and $k=1$.}
    \label{fig:geometry-k-cap-s-plus}
\end{figure}
\end{example}

\subsection{Projection onto \texorpdfstring{$\K$}{K}}\label{subsec:projection-K}

Projection onto $\K$ is required repeatedly in the algorithm. Since membership in $\K$ depends only on the first row and column and on the diagonal of the lower-right block, the off-diagonal part of $\bar X$ is preserved by the projection. The next result makes this reduction explicit. For
\[
\bar Y=
\begin{bmatrix}
\bar a & \bar x^\top\\
\bar x & \bar X
\end{bmatrix}\in \S^{n+1},
\qquad
\bar d:=\diag(\bar X),
\]
projection onto $\K$ reduces to a structured convex problem, as shown in 
the next proposition.

\begin{proposition}\label{prop:proj-K-reduced}
The projection of $\bar Y$ onto $\K$ is
\[
\Pi_{\K}(\bar Y)=
\begin{bmatrix}
a^* & (x^*)^\top\\
x^* & \Diag(d^*)+\bigl(\bar X-\Diag(\bar d)\bigr)
\end{bmatrix},
\]
where 
\begin{equation}
\label{eq:proj-K-reduced}
(a^*,x^*,d^*) =
\argmin_{\substack{a\in \R,\ x\in\R^n,\ d \in \R^n}}
\left\{
\frac12(a-\bar a)^2+\|x-\bar x\|^2+\frac12\|d-\bar d\|^2
\;\middle|\;
\begin{bmatrix}
ka & x^\top\\
x & \Diag(d)
\end{bmatrix}\succeq 0
\right\}.
\end{equation}
\end{proposition}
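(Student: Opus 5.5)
The plan is to decompose $\S^{n+1}$ orthogonally into the coordinates that $\D$ sees and the coordinates it ignores, and then solve the projection separately in each part. Write any $Y\in\S^{n+1}$ as $Y=[a,\,x^\top;\,x,\,\Diag(d)+E]$, where $a=Y_{11}$, $x=Y_{21}$, $d=\diag(Y_{22})$, and $E$ is symmetric with zero diagonal. The map $Y\mapsto(a,x,d,E)$ is a linear bijection. In these coordinates
\[
\|Y-\bar Y\|^2=(a-\bar a)^2+2\|x-\bar x\|^2+\|d-\bar d\|^2+\|E-\bar E\|^2,
\qquad \bar E:=\bar X-\Diag(\bar d),
\]
where the factor $2$ comes from the off-diagonal blocks $Y_{12}$ and $Y_{21}$. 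Moreover $\D(Y)=[ka,\,x^\top;\,x,\,\Diag(d)]$ depends only on $(a,x,d)$, so the constraint $Y\in\K$ involves only $(a,x,d)$ and places no restriction on $E$.

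Next I would compute $\Pi_\K(\bar Y)=\argmin\{\tfrac12\|Y-\bar Y\|^2: Y\in\K\}$ in these coordinates. The objective becomes
\[
\tfrac12(a-\bar a)^2+\|x-\bar x\|^2+\tfrac12\|d-\bar d\|^2+\tfrac12\|E-\bar E\|^2 ,
\]
and the constraint is $[ka,\,x^\top;\,x,\,\Diag(d)]\succeq0$. The problem therefore separates. In the $E$-variable it is unconstrained, so the minimizer is $E^*=\bar E$. In the $(a,x,d)$-variables it is exactly problem \eqref{eq:proj-K-reduced}. Reassembling $(a^*,x^*,d^*,E^*)$ gives the block matrix in the statement.

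Finally I would confirm that the $\argmin$ in \eqref{eq:proj-K-reduced} is a single well-defined point. The feasible set is the preimage of $\S^{n+1}_+$ under the linear map $(a,x,d)\mapsto[ka,\,x^\top;\,x,\,\Diag(d)]$, so it is a nonempty closed convex cone (it contains $0$). The objective is strongly convex in $(a,x,d)$. Hence the minimizer exists and is unique, and since $\K$ is closed and convex this also shows that $\Pi_\K(\bar Y)$ is unique.

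The main point needing care is the norm bookkeeping. The factor $2$ on $\|x-\bar x\|^2$ arises because $x$ appears in both off-diagonal blocks, so after the overall $\tfrac12$ it appears with coefficient $1$, while $a$ and $d$ keep coefficient $\tfrac12$; this matches the weights in \eqref{eq:proj-K-reduced}. The other step that must be explicit is that $E$ is orthogonal to the $(a,x,d)$-block, so its contribution to the norm simply adds. Apart from these two points the argument is routine.
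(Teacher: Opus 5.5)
Your proof is correct and follows the same route as the paper. Membership in $\K$ depends only on $(Y_{11},Y_{21},\diag(Y_{22}))$ and the Frobenius norm separates, so the off-diagonal part of $\bar X$ is kept unchanged, the remaining variables solve \eqref{eq:proj-K-reduced}, and uniqueness follows from strong convexity over a closed convex set. Your version is slightly more careful than the paper's: you make the weights explicit (the factor $2$ on $\|x-\bar x\|^2$), and you read the constraint directly off the definition of $\D$ rather than invoking Lemma~\ref{lemma:equivalence}, which is stated only for $Y\succeq 0$ with $Y_{11}=1$.
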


\begin{proof}\;
By \eqref{eq:kDiag-xx-general}, membership in $\K$ depends only on $Y_{11}$, $Y_{12}$, $Y_{21}$, and $\diag(Y_{22})$. Since the Frobenius norm is separable entrywise, the off-diagonal entries of $Y_{22}$ are unchanged by the projection. Hence any minimizer has the displayed form, which yields \eqref{eq:proj-K-reduced}. The feasible set is closed and convex, and the objective is strongly convex, so the solution is unique.
\end{proof}

The reduced problem \eqref{eq:proj-K-reduced} already isolates the essential difficulty. By the Schur complement, it is equivalent to a convex problem with the single nonlinear constraint $\sum_{i=1}^n x_i^2/d_i\le ka$. If the projection onto the orthant $a\ge 0$ and $d\ge 0$ is feasible, then it is optimal. Otherwise, strong duality introduces a single multiplier $\lambda$, and eliminating $x$ reduces the remaining computation to a scalar equation in $\lambda$. The next result gives the resulting one-dimensional reduction.

\begin{proposition}
\label{prop:appendix-proj-K}
Let $(a^*,x^*,d^*)$ be the unique optimal solution of problem~\eqref{eq:proj-K-reduced}.
For any $\lambda >0$, consider for each $i\in[n]$,
\begin{equation} \label{eq:d-lambda}
d_i(\lambda) := \argmin_{d_i\ge 0}\left\{
\frac12(d_i-\bar d_i)^2+\frac{\lambda\bar x_i^2}{d_i+\lambda}
\right\},\qquad i\in[n].
\end{equation}
Then $d_i(\lambda) = 0$ if 
$\bar d_i +{\bar x_i^2}/{\lambda}\leq 0$; otherwise, $d_i(\lambda)>0$ is the unique solution of
\begin{equation}
\label{eq:cubic-K}
(d_i-\bar d_i)(d_i+\lambda)^2=\lambda \bar x_i^2,
\qquad i\in[n].
\end{equation}
We have that either of the following two cases holds:
\begin{itemize}
\item[(i)] If $\sum_{i=1}^n \bar{x}_i^2/(\bar{d}_i)_+ \le k \bar{a}_+$, then
$
(a^*,x^*,d^*)=(\bar{a}_+,\bar{x},\bar{d}_+).
$

\item[(ii)] Otherwise, there exists a multiplier $\lambda^*>0$ such that
\[
a^*=(\bar a+k\lambda^*)_+,
\qquad
x_i^*=\frac{d_i^*}{d_i^*+\lambda^*}\,\bar x_i,\quad i\in[n],
\]
where $d_i^*=d_i(\lambda^*)$.
Such a multiplier $\lambda^*$ satisfies the scalar equation
\begin{equation}
\label{eq:root-K}
\sum_{i=1}^n
\frac{\bar x_i^2\, d_i(\lambda^*)}{(d_i(\lambda^*)+\lambda^*)^2}
=
k(\bar a+k\lambda^*)_+.
\end{equation}
\end{itemize}
\end{proposition}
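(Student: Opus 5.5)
We first establish the claim about the scalar subproblem \eqref{eq:d-lambda}. Fix $\lambda>0$ and set $\phi_i(d_i)=\frac12(d_i-\bar d_i)^2+\lambda\bar x_i^2/(d_i+\lambda)$ on $d_i\ge 0$. The function $\lambda\bar x_i^2/(d_i+\lambda)$ is convex on $d_i>-\lambda$, so $\phi_i$ is strongly convex and has a unique minimizer. The derivative is
\[
\phi_i'(d_i)=(d_i-\bar d_i)-\frac{\lambda\bar x_i^2}{(d_i+\lambda)^2},
\]
which is strictly increasing, with $\phi_i'(0)=-\bar d_i-\bar x_i^2/\lambda$. Hence $d_i(\lambda)=0$ exactly when $\phi_i'(0)\ge0$, that is, when $\bar d_i+\bar x_i^2/\lambda\le 0$. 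Otherwise the minimizer is the unique positive root of $\phi_i'=0$, which is \eqref{eq:cubic-K}.

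For case (i), we use Lemma~\ref{lemma:equivalence}'s reformulation: the constraint in \eqref{eq:proj-K-reduced} is equivalent to $a\ge0$, $d\ge0$, $x_i=0$ whenever $d_i=0$, and $\sum_i x_i^2/d_i\le ka$. This follows from the generalized Schur complement, exactly as in that proof. The point $(\bar a_+,\bar x,\bar d_+)$ minimizes the objective over the relaxed set $\{a\ge0,d\ge0\}$, because the objective is separable and $x$ is unconstrained there. Under the hypothesis of (i), with the extended-real convention, this point is feasible. Therefore it is optimal.

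For case (ii), we rewrite the problem as minimizing over $(a,x,d)$ subject to $g(a,x,d):=\sum_i x_i^2/d_i-ka\le0$ with $a,d\ge0$. Here $g$ is jointly convex, since it is a perspective function. Slater's condition holds, for example at $a$ large, $d=e$, $x=0$. So there exists a KKT multiplier $\lambda\ge0$ for the constraint $g\le 0$. Since (i) fails, the unconstrained-in-$g$ minimizer is infeasible, which forces $\lambda^*>0$ and the constraint to be active. We then carry out the minimization of the Lagrangian in stages.

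1. \emph{Eliminate $x$.} Minimizing $\|x-\bar x\|^2+\lambda x_i^2/d_i$ gives $x_i=d_i\bar x_i/(d_i+\lambda)$ when $d_i>0$. The resulting value contributes $\lambda\bar x_i^2/(d_i+\lambda)$ up to scaling; I will need to check the factor carefully against the $\|x-\bar x\|^2$ weight versus the $\frac12$ weights. If $d_i=0$, then $x_i=0$, which is consistent with the same formula.

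2. \emph{Minimize in $d_i$.} This yields precisely \eqref{eq:d-lambda}.

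3. \emph{Minimize in $a$.} Minimizing $\frac12(a-\bar a)^2-\lambda k a$ over $a\ge0$ gives $a=(\bar a+k\lambda)_+$.

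Because the objective is strongly convex, the Lagrangian minimizer at $\lambda^*$ is the unique primal solution. The active constraint $\sum_i x_i^2/d_i=ka$ then becomes \eqref{eq:root-K} after substituting $x_i^2/d_i=\bar x_i^2 d_i/(d_i+\lambda)^2$.

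The main obstacle is the weight bookkeeping. The objective weights $x$ by $1$ (because off-diagonal entries appear twice) and $a,d$ by $\frac12$, so the multiplier must be scaled consistently for \eqref{eq:d-lambda} and $a^*=(\bar a+k\lambda^*)_+$ to come out with the same $\lambda$. I would handle this by attaching multiplier $\lambda$ to the constraint written as $2\bigl(\sum_i x_i^2/d_i-ka\bigr)\cdot\frac12$ and verifying each stationarity condition directly. The second delicate point is justifying the joint minimization at boundary points $d_i=0$, where the perspective function is extended by closure. I would handle this by working with the closed convex function and checking the $d_i=0$ branch separately.
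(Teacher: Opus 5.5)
Your proposal is correct and follows the paper's proof essentially step for step. Case (i) uses the projection onto the orthant superset $\{a\ge 0,\ d\ge 0\}$. Case (ii) uses Slater-based Lagrangian duality with a single multiplier on $\sum_i x_i^2/d_i\le ka$, gets $\lambda^*>0$ by contradiction with (i), eliminates $x$, minimizes separately in $a$ and each $d_i$, and obtains \eqref{eq:root-K} from complementary slackness.

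The weight bookkeeping you flag is not an obstacle. With the plain multiplier, $\min_{x_i}\{(x_i-\bar x_i)^2+\lambda x_i^2/d_i\}$ equals exactly $\lambda\bar x_i^2/(d_i+\lambda)$; at $d_i=0$ it equals $\bar x_i^2$. Hence \eqref{eq:d-lambda} and $a^*=(\bar a+k\lambda^*)_+$ come out with the same $\lambda^*$.
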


\begin{proof}\;
By the Schur complement, problem~\eqref{eq:proj-K-reduced} is equivalent to
\begin{equation}
\label{eq:proj-K-reduced-2}
\min_{a,x,d}\left\{
\frac12(a-\bar a)^2+\|x-\bar x\|^2+\frac12\|d-\bar d\|^2
\;\middle|\;
a\ge 0,\ d\ge 0,\ \sum_{i=1}^n \frac{x_i^2}{d_i}\le k a
\right\}.
\end{equation}
Note that $(\bar{a}_+,\bar{x},\bar{d}_+)$ is the projection of $(\bar a,\bar x,\bar d)$ onto the set defined by $a\ge 0$ and $d\ge 0$, which is a superset of the feasible set of \eqref{eq:proj-K-reduced-2}.
If
 $\sum_{i=1}^n (\bar{x}_i)^2/(\bar{d}_i)_+\le k \bar{a}_+$,
then $(\bar{a}_+,\bar{x},\bar{d}_+)$ is feasible for \eqref{eq:proj-K-reduced-2}, and hence it is the projection onto the feasible set of \eqref{eq:proj-K-reduced-2}.
Otherwise, consider the dual problem of \eqref{eq:proj-K-reduced-2}:
\begin{equation} \label{eq:Lagrangian}
\max_{\lambda\ge 0}\min_{a\geq 0,d\geq 0,x}\left\{
L(a,x,d;\lambda):=\frac12(a-\bar a)^2+\|x-\bar x\|^2+\frac12\|d-\bar d\|^2
-\lambda k a + \lambda \sum_{i=1}^n \frac{x_i^2}{d_i}
\right\}.
\end{equation}
Strong duality holds by Slater's condition. Let $(a^*,x^*,d^*)$ be optimal for 
\eqref{eq:proj-K-reduced-2}
and $\lambda^*$ be optimal for \eqref{eq:Lagrangian} . If $\lambda^*=0$, then 
$(a^*,x^*,d^*)=(\bar{a}_+,\bar{x},\bar{d}_+)$, which contradicts the assumption that $\sum_{i=1}^n (\bar{x}_i)^2/(\bar{d}_i)_+ > k \bar{a}_+$. Hence $\lambda^*>0$.

Minimizing the Lagrangian $L(a,x,d;\lambda)$ with respect to $x$ first gives $x_i=d_i\bar x_i/(d_i+\lambda)$ for all $i\in[n]$. Thus the inner subproblem in \eqref{eq:Lagrangian} reduces to 
\begin{equation}
\label{eq:reduced}
\big(a(\lambda),d(\lambda)\big) := \argmin_{a\ge 0,d\ge 0}
\left\{
\frac12(a-\bar a)^2-\lambda ka
+\sum_{i=1}^n
\left[
\frac12(d_i-\bar d_i)^2+\frac{\lambda\bar x_i^2}{d_i+\lambda}
\right]
\right\}.
\end{equation}
Thus $a(\lambda)=(\bar a+k\lambda)_+$, and each $d_i(\lambda)$ is the unique minimizer of the stated one-dimensional problem \eqref{eq:d-lambda}. If $\bar d_i +{\bar x_i^2}/{\lambda}\le0$, then the function  
of $d_i$ within the square brackets in \eqref{eq:reduced} is nondecreasing on $[0,\infty)$, and hence $d_i(\lambda)=0$; otherwise, the first-order condition yields \eqref{eq:cubic-K}. Since $\lambda^*>0$, complementary slackness implies that the constraint $\sum_{i=1}^n x_i^2/d_i\le ka$ is active at the optimum. Substituting the formulas for $x_i^* = {d_i(\lambda^*)\bar{x}_i}/(d_i(\lambda^*) + \lambda^*)$ and 
$a^* = a(\lambda^*)$ into this equality gives \eqref{eq:root-K}. 

Any multiplier $\lambda^*>0$ satisfying \eqref{eq:root-K}, together with the formulas above, satisfies the KKT conditions and therefore yields the unique projection $(a^*,x^*,d^*)$.
\end{proof}

\begin{remark}
\label{rem:appendix-proj-K-newton}
To find an optimal multiplier $\lambda^*$, we solve the scalar equation \eqref{eq:root-K}. On the branch where $d_i(\lambda)>0$, implicit differentiation of \eqref{eq:cubic-K} gives
\[
\frac{{\rm d}\, d_i(\lambda)}{{\rm d}\lambda}
=
\frac{(d_i(\lambda)-\lambda)\bar x_i^2}{(d_i(\lambda)+\lambda)^3}
\bigg/
\left(1+\frac{2\lambda \bar x_i^2}{(d_i(\lambda)+\lambda)^3}\right).
\]
Hence \eqref{eq:root-K} can be handled efficiently by a Newton method.
\end{remark}

\begin{remark}
In case {\rm(ii)} of Proposition~\ref{prop:appendix-proj-K}, the multiplier $\lambda^*$ belongs to the interval $[0,U]$, where $U:=(-\bar a+\sqrt{\bar a^2+\|\bar x\|^2})/(2k)$.
Indeed, if
$\phi(\lambda):=\sum_{i=1}^n \bar x_i^2 d_i(\lambda)/(d_i(\lambda)+\lambda)^2-k(\bar a+k\lambda)_+$,
then the elementary bound $d/(d+\lambda)^2\le 1/(4\lambda)$ for $d\ge 0$ implies
$\phi(\lambda)\le \|\bar x\|^2/(4\lambda)-k(\bar a+k\lambda)_+$.
Since case {\rm(ii)} implies $\phi(0^+)>0$ and $\phi(\lambda^*)=0$, the definition of $U$ gives $\phi(U)\le 0$, and hence $\lambda^*\in[0,U]$. This bound is convenient for safeguarded Newton or bisection, although it is generally not tight.
\end{remark}

\begin{remark}
For problems with additional nonnegativity constraints $x\ge 0$, the RLT system \eqref{eq:intro-RLT} naturally induces the lifted constraint $Y\in\mathcal N$, where \(\mathcal N\) is defined in \eqref{eq:nonnegative-cone}. The projection onto $\K\cap\mathcal N$ is then obtained from the projection onto $\K$ with only two changes: the off-diagonal entries of the lower-right block are projected independently onto the nonnegative orthant, and the vector $\bar x$ in the structured subproblem is replaced by $(\bar x)_+$. Consequently, the same one-dimensional reduction as in Proposition~\ref{prop:appendix-proj-K} applies after this replacement.
\end{remark}

\subsection{Geometric properties of \texorpdfstring{$\K^*$}{K*}}\label{subsec:property-Kstar}

The dual cone $\K^*$ is defined by
\[\K^*=\{W\in\S^{n+1}:\langle W,Y\rangle\ge 0\ \forall Y\in \K\}.\]
The dual cone $\K^*$ plays a central role in both the multiplier updates of the augmented Lagrangian method and the dual-fixing analysis. We begin by deriving an explicit representation of $\K^*$. The following completion lemma provides the key ingredient.

\begin{lemma}
\label{lem:arrow-psd-completion}
Let $\alpha\in\R$, $z\in\R^n$, and $d\in\R^n$. Then the following are equivalent:
\begin{enumerate}[label=(\roman*), itemsep=0pt, topsep=2pt]
\item there exists $U\in\S^n$ with $\diag(U)=d$ such that
$
\begin{bmatrix}
\alpha & z^\top\\
z & U
\end{bmatrix}\succeq 0;
$
\item $\alpha\ge 0$, $d\ge 0$, and $z_i^2\le \alpha d_i$ for all $i\in[n]$.
\end{enumerate}
\end{lemma}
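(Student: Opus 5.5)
The direction (i)$\Rightarrow$(ii) is immediate from principal minors. If the block matrix is positive semidefinite, then its $1\times1$ principal minors give $\alpha\ge 0$ and $U_{ii}=d_i\ge 0$. Each $2\times 2$ principal submatrix $\begin{bmatrix}\alpha & z_i\\ z_i & d_i\end{bmatrix}$ is positive semidefinite, so its determinant satisfies $\alpha d_i - z_i^2\ge 0$.

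For (ii)$\Rightarrow$(i) I will give an explicit completion, splitting into two cases according to $\alpha$.

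\emph{Case $\alpha>0$.} Take
\[
U=\alpha^{-1}zz^\top+\Diag\bigl(d-\alpha^{-1}(z\circ z)\bigr).
\]
Its diagonal is exactly $d$. By (ii), the diagonal correction $d_i - z_i^2/\alpha$ is nonnegative. The Schur complement of $\alpha$ in the block matrix is
\[
U-\alpha^{-1}zz^\top=\Diag\bigl(d-\alpha^{-1}(z\circ z)\bigr)\succeq 0,
\]
so the block matrix is positive semidefinite. Equivalently, it is the sum of the rank-one matrix $\alpha^{-1}[\alpha;z][\alpha;z]^\top$ and the positive semidefinite matrix $0\oplus\Diag\bigl(d-\alpha^{-1}(z\circ z)\bigr)$.

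\emph{Case $\alpha=0$.} Condition (ii) forces $z_i^2\le 0$, so $z=0$. Then $U=\Diag(d)$ works, since the block matrix is $0\oplus\Diag(d)\succeq 0$ because $d\ge 0$.

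There is no real obstacle here. The only point requiring care is the degenerate case $\alpha=0$, where the Schur complement formula divides by zero. The case split above handles it, and it relies on (ii) forcing $z=0$.
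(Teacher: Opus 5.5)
Your proposal is correct and follows essentially the same route as the paper: principal minors for (i)$\Rightarrow$(ii), and for (ii)$\Rightarrow$(i) the same case split, with $U=\Diag(d)$ when $\alpha=0$ (since $z=0$) and the same explicit completion $U=\alpha^{-1}zz^\top+\Diag\bigl(d-\alpha^{-1}(z\circ z)\bigr)$ verified by the Schur complement when $\alpha>0$. Your added rank-one-plus-diagonal decomposition is just an equivalent way of seeing the same Schur complement step.
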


\begin{proof}\;
(i) $\Rightarrow$ (ii). If (i) holds, then positive semidefiniteness implies $\alpha\ge 0$, $d_i=U_{ii}\ge 0$, and every principal $2\times 2$ minor gives $z_i^2\le \alpha d_i$.

(ii) $\Rightarrow$ (i). Assume (ii). If $\alpha=0$, then necessarily $z=0$, and choosing $U=\Diag(d)$ gives (i). If $\alpha>0$, define $U:=\alpha^{-1}zz^\top+\Diag(d-\alpha^{-1}(z\circ z))$, where $z\circ z=(z_1^2,\dots,z_n^2)^\top$. Then $\diag(U)=d$, the diagonal correction is nonnegative by (ii), and $U-\alpha^{-1}zz^\top\succeq 0$. Hence the Schur complement implies (i).
\end{proof}

\begin{proposition}
\label{prop:Kstar-explicit}
\label{prop:Kstar-parameterization}
Every $W\in\K^*$ admits a unique representation of the form
\[
W=
\begin{bmatrix}
k\alpha & z^\top\\
z & \Diag(d)
\end{bmatrix},
\]
where
$
\alpha\ge 0,\; d\ge 0,\;
z_i^2\le \alpha d_i,\; i\in[n].
$
\end{proposition}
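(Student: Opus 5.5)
We would prove the statement in two halves. First we show that every $W\in\K^*$ has the displayed arrow structure. Then we use Lemma~\ref{lem:arrow-psd-completion} to get the sign and coupling conditions on $(\alpha,z,d)$.

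\emph{Step 1 (structure).} Since $\K$ contains its lineality space $\lin(\K)=\{Y: Y_{11}=0,\ Y_{12}=0,\ \Diag(Y_{22})=0\}$, any $W\in\K^*$ must satisfy $\langle W,Y\rangle\ge 0$ for both $Y$ and $-Y$ in this subspace. Hence $\langle W,Y\rangle=0$ there. Testing with $Y=E_{ij}+E_{ji}$ for distinct indices $i,j\ge 2$ forces $W_{ij}=0$. So the lower-right block of $W$ is diagonal, say $\Diag(d)$. Writing $W_{11}=k\alpha$ (possible since $k>0$) and $W_{21}=z$ gives the displayed form. Uniqueness is immediate, because $\alpha=W_{11}/k$, $z=W_{21}$ and $d=\diag(W_{22})$ are read off from the entries of $W$.

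\emph{Step 2 (conditions).} Define the linear map $\D$ as in \eqref{eq:intro-K-set}. For $W$ of arrow form and any $Y$ we have $\langle W,Y\rangle = k\alpha Y_{11}+2z^\top Y_{21}+\sum_i d_iY_{ii}$. We compare this with $\langle \widetilde W,\D(Y)\rangle$ for a suitable $\widetilde W$.

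A direct route is to test with rank-one elements of $\K$. By Theorem~\ref{thm:equivalence-K}, $Y=yy^\top$ with $y=(t,x)$ lies in $\K$ whenever $\|x\|_0\le k$, and in particular whenever $x=se_i$ for $i\in[n]$. Taking $y=(t,se_i)$ gives
\[
\langle W,yy^\top\rangle=k\alpha t^2+2z_its+d_is^2\ge 0
\quad\text{for all } t,s .
\]
So the $2\times2$ matrix $[k\alpha,z_i;z_i,d_i]$ is PSD, which gives $\alpha\ge0$, $d_i\ge0$ and $z_i^2\le k\alpha d_i$.

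This is stronger than what was claimed, and this mismatch is the main obstacle. The stated condition $z_i^2\le\alpha d_i$ must come from using \emph{all} of $\K$, including its non-rank-one and non-PSD elements, rather than only the $1$-sparse test points above. To get it, we use the fact that $Y\in\K$ only requires $\D(Y)\succeq0$, and choose
\[
Y=\begin{bmatrix} 1 & s e_i^\top\\ s e_i & \Diag(u)\end{bmatrix}
\]
with $u_i=s^2/k$ and $u_j=0$ for $j\ne i$. By the Schur complement, $\D(Y)\succeq0$ holds exactly when $\Diag(u)\succeq s^2e_ie_i^\top/k$, which is true. This yields
\[
\langle W,Y\rangle=k\alpha+2z_is+d_is^2/k\ge0\quad\text{for all } s,
\]
hence $z_i^2\le \alpha d_i$. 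So the condition in the statement follows from this sharper, non-rank-one test point.

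\emph{Converse.} We check that every $W$ of this form with $\alpha\ge0$, $d\ge0$, $z_i^2\le\alpha d_i$ lies in $\K^*$. The condition is invariant under the scaling $(\alpha,z,d)\mapsto(k\alpha,z,d/k)$, so we may rewrite $W$ as $\D^*$ applied to the arrow matrix $[\alpha,z^\top;z,\Diag(d)]$. By Lemma~\ref{lem:arrow-psd-completion} with parameters $(\alpha,z,d)$, this arrow matrix has a PSD completion $M$ with $\diag(M_{22})=d$. Since $\D(Y)$ has diagonal lower block, $\langle W,Y\rangle=\langle M,\D(Y)\rangle\ge0$ for every $Y\in\K$.

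The careful step is bookkeeping the factor $k$ so that the identity $\langle W,Y\rangle=\langle M,\D(Y)\rangle$ matches entrywise. We would verify it directly, and adjust $M$'s $(1,1)$ entry by a factor of $k$ if needed.
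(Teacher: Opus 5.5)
Your proof is correct, but the necessity direction follows a different route from the paper's. The paper uses conic duality to write $\K^*=\D^*(\S^{n+1}_+)$. It computes $\D^*([\alpha,z^\top;z,U])=[k\alpha,z^\top;z,\Diag(\diag(U))]$ and reads off both directions at once from Lemma~\ref{lem:arrow-psd-completion}.

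You stay in the primal instead. The directions $\pm(E_{ij}+E_{ji})$, $i\neq j\ge2$, are mapped to $0$ by $\D$, so they lie in $\K$, and this forces the arrow structure. The non-PSD test point $[1,se_i^\top;se_i,\Diag(u)]$ with $u=(s^2/k)e_i$ then gives the sharp inequality $z_i^2\le\alpha d_i$. It also gives $\alpha\ge0$ and $d_i\ge0$, by taking $s=0$ and $|s|\to\infty$. After that, the completion lemma is needed only in its easy direction (ii)$\Rightarrow$(i), for the inclusion $\D^*(\S^{n+1}_+)\subseteq\K^*$.

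What your route buys is self-containedness. The identity $\K^*=\D^*(\S^{n+1}_+)$ holds without taking a closure only because of a constraint qualification (e.g.\ $\D(I)=\Diag(k,1,\dots,1)\succ0$), which the paper leaves implicit. Your necessity argument uses no duality theory at all. The paper's route is shorter and directly yields the representation $\K^*=\D^*(\S^{n+1}_+)$ that underlies the dual \eqref{eq:dual-Decomposed-SDP}.

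Your remark that non-PSD elements of $\K$ are genuinely needed is also right. For $k>1$, a PSD arrow matrix with a single nonzero $z_i$ and $\alpha d_i<z_i^2\le k\alpha d_i$ is nonnegative on $\K\cap\S^{n+1}_+$ but does not lie in $\K^*$.

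Two small corrections:
\begin{itemize}
\item The rank-one bound $z_i^2\le k\alpha d_i$ is \emph{weaker}, not stronger, than the claimed one (since $k\ge1$). That is exactly why it does not suffice, and given your second test point the rank-one step is redundant.
\item In the converse, no rescaling or adjustment of the $(1,1)$ entry is needed. With $M=[\alpha,z^\top;z,U]$ and $\diag(U)=d$, one has $\D^*(M)=W$ exactly. So $\langle W,Y\rangle=\langle M,\D(Y)\rangle$ holds as written, and the remark about $(\alpha,z,d)\mapsto(k\alpha,z,d/k)$ can be dropped.
\end{itemize}
The converse is not required by the statement as phrased, though the paper's argument establishes it as well.
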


\begin{proof}\;
Since $\K=\{Y\in\S^{n+1}:\D(Y)\succeq 0\}$, standard conic duality gives
$\K^*=\D^*(\S^{n+1}_+)$. Moreover, for any $T=[\alpha, z^\top; z, U]\in \S^{n+1}$, we have
$\D^*(T)=[k\alpha, z^\top; z, \Diag(U)]$. Hence $W\in\K^*$ if and only if
$W=[k\alpha, z^\top; z, \Diag(d)]$
for some $\alpha\in\R$, $z\in\R^n$, and $d\in\R^n$ such that there exists
$U\in\S^n$ with $\diag(U)=d$ and $[\alpha, z^\top; z, U]\succeq 0$. By Lemma~\ref{lem:arrow-psd-completion}, this is equivalent to
$\alpha\ge 0$, $d\ge 0$, and $z_i^2\le \alpha d_i$ for all $i\in[n]$, which proves the displayed characterization. Uniqueness follows immediately from the entries of $W=[k\alpha, z^\top; z, \Diag(d)]$.
\end{proof}

The multiplier update associated with $\K^*$ can be recovered from $\Pi_\K$ through the Moreau decomposition,
\begin{equation}\label{rem:moreau}
    \bar Y=\Pi_\K(\bar Y)-\Pi_{\K^*}(-\bar Y),
\end{equation}
so the algorithm only needs the projection routine for $\K$.

\subsection{Projection efficiency}\label{subsec:projection-efficiency}

{
In this subsection, we evaluate the efficiency of the projection routine onto $\K$, which is one of the main operations used by our algorithm RiNNAL-S to be developed later in Section~\ref{sec:algorithm} for solving \eqref{eq:intro-Decomposed-SDP}. The nonnegative variant $\K\cap\mathcal{N}$, with \(\mathcal N\) defined in \eqref{eq:nonnegative-cone}, has the same one-dimensional structure after the modifications described in Subsection~\ref{subsec:projection-K}, and its behavior is similar. We therefore report the data generation and numerical results for $\K\cap\mathcal N$ in Appendix~\ref{app:additional-projection}.

For the test instances, we use the same generation protocol across the tested dimensions and report representative values of $n$. For $\K$, we first generate feasible matrices with $a,d_i\in[0.5,1.5]$ and $\sum_i x_i^2/d_i=0.7ka$, where the vector $x$ has random signs, and then add a symmetric perturbation.

We compare our projection scheme based on solving \eqref{eq:root-K} 
in Proposition~\ref{prop:appendix-proj-K}
with MOSEK 11.0 \cite{mosek}
for solving \eqref{eq:proj-K-reduced}.
In Table~\ref{tab:projection-benchmark-direct}, `Speedup' denotes the runtime ratio when both solvers finish, and `RelErr' denotes the relative difference $\|Y_{\mathrm{Ours}}-Y_{\mathrm{MOSEK}}\|_F/(1+\|Y_{\mathrm{MOSEK}}\|_F)$.
If MOSEK exceeds the time limit, the corresponding `Speedup' and `RelErr' entries are reported as ``--''.

\small
\begin{longtable}{lrrrrr}
\caption{Projection results on $\K$.}
\label{tab:projection-benchmark-direct}\\
\toprule
Set & $n$ & Ours (s) & MOSEK (s) & Speedup & RelErr \\
\midrule
\endfirsthead
\caption[]{continued from previous page}\\
\toprule
Set & $n$ & RiNNAL-S (s) & MOSEK (s) & Speedup & RelErr \\
\midrule
\endhead
\midrule
\multicolumn{6}{r}{Continued on next page}\\
\endfoot
\bottomrule
\endlastfoot
$\K$   & 50    & 0.0034 & 2.6      & 754      & 1.28e-06 \\
$\K$   & 200   & 0.0086 & 2448.2   & 283636   & 3.26e-07 \\
$\K$   & 1000  & 0.0520 & $>3600.0$  & --       & -- \\
$\K$   & 10000 & 2.9000 & $>3600.0$  & --       & -- \\
\end{longtable}
\normalsize

The results in Table~\ref{tab:projection-benchmark-direct} show that our projection scheme based on \eqref{eq:root-K} requires substantially less time than MOSEK for projection onto $\K$. The runtime gap widens with the dimension, reaching several orders of magnitude before MOSEK hits the time limit on large instances, while the relative errors remain uniformly small. Together with the supplementary results in Appendix~\ref{app:additional-projection}, this supports the implementation choice of using the projection $\Pi_\K$ as the basic 
operation within the RiNNAL-S algorithm to be developed
later for solving \eqref{eq:intro-Decomposed-SDP}.
}

\section{SC-SDP relaxation}\label{sec:sc-sdp}

In this section, we develop the SC-SDP relaxation and the associated dual objects used later for presolving and algorithm design. Subsection~\ref{subsec:relaxation-formulation} presents the relaxation formulation and its dual counterpart, Subsection~\ref{subsec:tightness-comparison} compares its tightness with the standard SDP and SDP--RLT relaxations. Subsection~\ref{subsec:applications} studies the unconstrained sparse ridge regression setting and its connection with the optimal perspective relaxation.

\subsection{Relaxation formulation}\label{subsec:relaxation-formulation}

\noindent\tb{SDP relaxation.}
The standard SDP relaxation for \eqref{eq:MIQP} is constructed by introducing auxiliary variables to represent all quadratic products of $x$ and $z$, relaxing the binary variables to continuous ones, and linearizing the nonconvex constraints through positive semidefinite constraints. The resulting SDP relaxation can be written as follows:
\begin{equation}\label{eq:Shor}
\min\left\{
\langle Q,X_{xx} \rangle+2c^\top x  \;\middle|\;
\begin{aligned}
&Ax=b,\;Bx\geq d,\; x = \diag(X_{xz}),\; z = \diag(X_{zz})\\
& 
\begin{bmatrix}
    1&x^\top & z^\top\\
    x&X_{xx} & X_{xz}\\
    z&X_{xz}^\top & X_{zz}
\end{bmatrix}\succeq 0,\; z\in \Z
\end{aligned}
\right\},
\end{equation}
where $\mathcal{Z}\coloneqq\{z\in [0,1]^n: e^\top z \le k\}$.
The SDP relaxation provides a valid lower bound for \eqref{eq:MIQP} but is not tight in general.

\medskip
\noindent\tb{SDP--RLT relaxation.}
To further strengthen \eqref{eq:Shor}, we add the RLT constraints in \eqref{eq:intro-RLT} derived from the original linear constraints. This yields the SDP--RLT relaxation
\begin{equation}\label{eq:Shor-RLT}
\min\left\{
\langle Q,X_{xx} \rangle+2c^\top x  \;\middle|\;
\begin{aligned}
&\A(Y)=0,\; \B(Y)\geq 0,\; x = \diag(X_{xz}),\; z = \diag(X_{zz})\\
& 
\begin{bmatrix}
    1&x^\top & z^\top\\
    x&X_{xx} & X_{xz}\\
    z&X_{xz}^\top & X_{zz}
\end{bmatrix}\succeq 0,\; z\in \Z
\end{aligned}
\right\}.
\end{equation}
where $\A(Y)$ and $\B(Y)$ are defined in \eqref{eq:intro-RLT}.
The SDP--RLT relaxation provides a significantly tighter bound than the standard SDP relaxation alone. However, it is computationally prohibitive for large-scale problems due to the large semidefinite matrix with dimension $2n+1$ and the large number of RLT constraints.

\medskip
\noindent\tb{Reduced SDP--RLT relaxation.}
To avoid the dimensional bottleneck of \eqref{eq:Shor-RLT}, we propose a reduced SDP--RLT relaxation that preserves the SDP--RLT bound while reducing the size of the semidefinite matrix. To construct it, we introduce an auxiliary variable $X=xx^\top$ to linearize the quadratic term in the objective function and represent the complementarity constraints in \eqref{eq:MIQP} by the linear matrix inequalities (LMIs)
\[
\begin{bmatrix}
    z_i&x_i\\x_i & X_{ii}
\end{bmatrix}\succeq 0,\qquad i\in [n].
\]
Then, by relaxing the rank-one constraint $X=xx^\top$, we obtain the lifted matrix
\begin{equation}\label{eq:Y}
    Y
\;\coloneq\; 
\begin{bmatrix}
    Y_{11} &Y_{12}\\Y_{21} & Y_{22}
\end{bmatrix}
\;=\;
\begin{bmatrix}
    1&x^\top \\
    x&X
\end{bmatrix}\succeq 0
\end{equation}
and, together with the RLT constraints, the reduced SDP--RLT relaxation of \eqref{eq:MIQP} becomes
\begin{equation}
\label{eq:Decomposed-SDP-z}
\min_{Y,z}\left\{
\langle \bQ,Y \rangle  \;\middle|\;  
\begin{aligned}
&\A(Y)=0,\; \B(Y)\geq 0,\;Y_{11}=1,\; Y\succeq 0\\
& \begin{bmatrix}
z_i & x_i\\
x_i & X_{ii}
\end{bmatrix}\succeq 0,\ i\in [n],\;z\in \mathcal{Z}
\end{aligned}
\right\},
\end{equation}
where the cost matrix $\bQ\coloneqq [ 0, c^\top; c, Q]$.
As we shall show in Theorem~\ref{thm:equivalence}, the reduced SDP--RLT relaxation \eqref{eq:Decomposed-SDP-z} is equivalent to the SDP--RLT relaxation \eqref{eq:Shor-RLT} and has a much smaller semidefinite matrix with dimension $n+1$ instead of $2n+1$.
However, \eqref{eq:Decomposed-SDP-z} still keeps the explicit support vector $z$ and $n$ separate LMIs, which are not conducive for algorithm design and presolving.

\medskip
\noindent\tb{SC-SDP relaxation.}
The SC-SDP relaxation is obtained by 
eliminating the variable $z$ and its associated constraint $z\in \mathcal{Z}$, and 
replacing the $n$ separate LMIs in \eqref{eq:Decomposed-SDP-z} by a single structured constraint that still captures the sparsity information. The explicit formulation \eqref{eq:intro-Decomposed-SDP} is provided in Subsection \ref{subsec:SDP-RLT-formulation}, where the cone $\K$ is defined by \eqref{eq:intro-K-set}.
The dual problem of \eqref{eq:intro-Decomposed-SDP} can be written as
\begingroup
\renewcommand{\theequation}{D}
\begin{equation}\label{eq:dual-Decomposed-SDP}
\max\left\{ \alpha
\;\middle|\;
\begin{aligned}
&\bQ-\A^* (\Gamma) - \B^* (\Lambda)-\D^*(W) - S-\alpha E_{11}=0,\\
&\alpha\in\R,\;\Gamma\in\S^{m},\;\Lambda\in\R^{(l+1)\times(l+1)}_+,\;W\in\S^{n+1}_+,\;S\in\S^{n+1}_+
\end{aligned}
\right\}, \tag{D}
\end{equation}
\endgroup
Here, $E_{11}$ denotes the matrix with 1 in the $(1,1)$-entry and 0 elsewhere. The KKT conditions for \eqref{eq:intro-Decomposed-SDP} and \eqref{eq:dual-Decomposed-SDP} can be written as
\begin{equation}
\label{eq:KKT-Decomposed-SDP}
\begin{aligned}
&\A(Y)=0,\; \B(Y)\geq 0,\; Y_{11}=1,\; Y\in \K\cap \S^{n+1}_+,\\
&\bQ-\A^* (\Gamma) - \B^* (\Lambda)-\D^*(W) - S-\alpha E_{11}=0,\\
&\langle \B(Y),\Lambda\rangle=0,\; \langle \D(Y),W\rangle=0,\; \langle Y,S\rangle=0,\; 
\Lambda\geq 0,\; W\succeq 0,\; S\succeq 0.
\end{aligned}
\end{equation}

The formulations above provide the basis for the remainder of this section. In particular, Subsection~\ref{subsec:tightness-comparison} shows that \eqref{eq:intro-Decomposed-SDP} has the same optimal value as the reduced SDP--RLT relaxation \eqref{eq:Decomposed-SDP-z} and the SDP--RLT relaxation \eqref{eq:Shor-RLT}.  Subsection~\ref{subsec:applications} relates SC-SDP to the optimal perspective relaxation in the special case of unconstrained sparse ridge regression setting. The dual problem 
\eqref{eq:dual-Decomposed-SDP} introduced here will be used again in Section~\ref{sec:presolving} to derive presolving rules and a refined dual certificate.

\subsection{Tightness comparison}\label{subsec:tightness-comparison}

The next result places the SC-SDP relaxation in the usual hierarchy of relaxations for \eqref{eq:MIQP}. In particular, it has the same optimal value as the reduced SDP--RLT and SDP--RLT relaxations, while remaining tighter than the standard SDP relaxation.

\begin{theorem}
\label{thm:equivalence} 
The following inequality chain holds:
\[
\emph{val} \eqref{eq:Shor}
\;\le\;
\emph{val} \eqref{eq:Shor-RLT}
\;=\;
\emph{val} \eqref{eq:Decomposed-SDP-z}
\;=\;
\emph{val} \eqref{eq:intro-Decomposed-SDP}
\;\le\; \emph{val} \eqref{eq:QP}.
\]
\end{theorem}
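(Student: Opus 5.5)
Each link in the chain will be proved by a feasible-point mapping between relaxations that preserves the objective. The first inequality, val\eqref{eq:Shor} $\le$ val\eqref{eq:Shor-RLT}, holds because \eqref{eq:Shor-RLT} only adds constraints to \eqref{eq:Shor}. To see this, note that $\A(Y)=0$ together with $Y\succeq 0$ gives $[-b\; A]Y^{1/2}=0$, hence $Ax=b$. Likewise, the first column of $\B(Y)\ge 0$ contains $Bx-d\ge 0$. The last inequality, val\eqref{eq:intro-Decomposed-SDP} $\le$ val\eqref{eq:QP}, follows by lifting any feasible $x$ of \eqref{eq:QP} to $Y=[1;x][1;x]^\top$. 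Then $\A(Y)=(Ax-b)(Ax-b)^\top=0$ and $\B(Y)=[1;Bx-d][1;Bx-d]^\top\ge 0$. Moreover $Y\in\K$ by Theorem~\ref{thm:equivalence-K}, and $\langle\bQ,Y\rangle=x^\top Qx+2c^\top x$.

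For val\eqref{eq:Shor-RLT} $=$ val\eqref{eq:Decomposed-SDP-z}, I would show both inequalities.

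\emph{Direction $\ge$.} Take a feasible point of \eqref{eq:Shor-RLT}. Its $[1;x]$ principal block $Y$ is PSD and satisfies the RLT constraints, and $z\in\Z$. The principal $2\times2$ submatrix of the big matrix on rows $(z_i,x_i)$ is $\begin{bmatrix}(X_{zz})_{ii}&(X_{xz})_{ii}\\ (X_{xz})_{ii}& (X_{xx})_{ii}\end{bmatrix}=\begin{bmatrix}z_i&x_i\\x_i&X_{ii}\end{bmatrix}\succeq0$, using the diagonal constraints. So $(Y,z)$ is feasible for \eqref{eq:Decomposed-SDP-z} with the same objective.

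\emph{Direction $\le$.} Take $(Y,z)$ feasible for \eqref{eq:Decomposed-SDP-z}; I must build the $z$-blocks. I would choose $X_{zz}=zz^\top+\Diag(z-z\circ z)$, which has diagonal $z$, and $X_{xz}$ with diagonal $x$, such that the $(2n+1)$ matrix is PSD. The natural attempt is to write $Y=\widehat R\widehat R^\top$ with rows $e_1^\top$ and $R_i$, and to realize the $z$-part by Gram vectors $g_i=z_ie_1+u_i$. Here the $u_i$ are new orthogonal directions, each of squared norm $z_i-z_i^2$, so $\|g_i\|^2=z_i$. Then $(X_{xz})_{ii}=\langle R_i,g_i\rangle$ must equal $x_i$. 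Since $x_i=\langle R_i,e_1\rangle$-component, we need $\langle R_i-x_ie_1,u_i\rangle=x_i-z_ix_i$. This requires extending $R_i$ by components along the $u_i$ directions. That extension changes $X_{ii}$ and off-diagonal entries, so it must be done carefully.

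I expect this to be the main obstacle, and what I would try first is a cleaner substitute: rather than matching $Y$ exactly, use that the objective and RLT constraints depend only on $Y$. Decompose $Y_i$-wise as follows. Replace row $R_i$ by $(R_i, s_i)$ with new coordinate $s_i\in\R$, with $g_i=(z_ie_1,\dots,t_i)$ in the same new coordinate. Here $t_i=\sqrt{z_i-z_i^2}$, and $s_i$ is chosen with $s_it_i=x_i(1-z_i)$. This modifies only $X_{ii}$, increasing it by $s_i^2$, which would break the RLT equalities. To avoid that, I would instead absorb the coordinate from the existing slack: the LMI gives $X_{ii}z_i\ge x_i^2$. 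Writing $R_i=x_ie_1+\tilde R_i$ with $\|\tilde R_i\|^2=X_{ii}-x_i^2\ge x_i^2(1-z_i)/z_i$, I take $g_i=z_ie_1+\tau_i\tilde R_i+v_i$, with $\tau_i$ solving $\tau_i\|\tilde R_i\|^2=x_i(1-z_i)$. The vector $v_i$ is orthogonal, fresh, and pads $\|g_i\|^2$ to $z_i$. The slack inequality is exactly what makes $z_i^2+\tau_i^2\|\tilde R_i\|^2\le z_i$ feasible. The zero cases $z_i=0$ or $\tilde R_i=0$ are handled separately, where $x_i=0$ or $z_i=1$ respectively. The Gram matrix then gives a PSD completion leaving $Y$ unchanged.

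For val\eqref{eq:Decomposed-SDP-z} $=$ val\eqref{eq:intro-Decomposed-SDP}, I would show the two feasible $Y$-projections coincide. If $(Y,z)$ is feasible, then $x_i^2/X_{ii}\le z_i$, so $\sum_i x_i^2/X_{ii}\le e^\top z\le k$, and $Y\in\K$ by Lemma~\ref{lemma:equivalence}. Conversely, if $Y\in\K$, set $z_i=x_i^2/X_{ii}$ (with $0/0=0$). These satisfy $z_i\le1$ because $Y\succeq0$, $\sum_i z_i\le k$ by Lemma~\ref{lemma:equivalence}, and each $2\times2$ LMI holds.
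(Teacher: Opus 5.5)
Your proposal is correct and follows the paper's proof essentially step by step. It uses the same principal-submatrix projection and the same choice $z_i=x_i^2/X_{ii}$. It also uses the same Gram-vector completion. Your $z_ie_1+\tau_i\tilde R_i$ is precisely the paper's minimum-norm vector $u_i=P_i^\top S_i^\dagger b_i$ in $\mathrm{span}\{p_0,p_i\}$, after rotating $p_0$ to $e_1$, padded along a fresh coordinate. Your direct check $z_i^2+\tau_i^2\|\tilde R_i\|^2\le z_i \iff z_iX_{ii}\ge x_i^2$ replaces the paper's $3\times 3$ determinant and generalized Schur complement argument. For the first inequality you are more careful than the paper, since you derive $Ax=b$ and $Bx\ge d$ from $\A(Y)=0$ and $\B(Y)\ge 0$.

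One small slip: $\tilde R_i=0$ does not force $z_i=1$, because it can also occur with $x_i=0$ and arbitrary $z_i$. All you need there is $x_i(1-z_i)=0$, which holds in either subcase, so $\tau_i=0$ works. Drop the exploratory choice $X_{zz}=zz^\top+\Diag(z-z\circ z)$, since your final Gram construction does not produce it.
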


\begin{proof}\;
We first compare the SDP--RLT relaxation \eqref{eq:Shor-RLT} with the reduced SDP--RLT relaxation \eqref{eq:Decomposed-SDP-z}. Let a feasible solution of \eqref{eq:Shor-RLT} be given and set $Y=[1, x^\top; x, X_{xx}]$.
Since $Y$ is a principal submatrix of the lifted SDP matrix, $Y\succeq0$. Moreover, each principal submatrix indexed by $(x_i,z_i)$ gives
\[
\begin{bmatrix}
X_{xx,ii}&X_{xz,ii}\\
X_{xz,ii}&X_{zz,ii}
\end{bmatrix}
=
\begin{bmatrix}
X_{xx,ii}&x_i\\
x_i&z_i
\end{bmatrix}
\succeq0,
\]
where $X_{xx,ii}$, $X_{xz,ii}$, and $X_{zz,ii}$ are the $(i,i)$-entries of $X_{xx}$, $X_{xz}$, and $X_{zz}$, respectively.
After permuting the two rows and columns, this is exactly the perspective LMI in \eqref{eq:Decomposed-SDP-z}. The RLT constraints depend only on $Y$, and the objective values agree because $\langle Q,X_{xx}\rangle+2c^\top x=\langle \bQ,Y\rangle$.
Hence every feasible solution of \eqref{eq:Shor-RLT} projects to a feasible solution of \eqref{eq:Decomposed-SDP-z} with the same objective value.

Conversely, let $(Y,z)$ be feasible for \eqref{eq:Decomposed-SDP-z}, where $Y=[1,x^\top;x,X]\succeq0$. Take a square factorization $Y=PP^\top$ with $P\in\R^{(n+1)\times(n+1)}$, and write the rows of $P$ as $p_0,\ldots,p_n\in\R^{n+1}$. For each $i\in[n]$, consider
\[
G_i=
\begin{bmatrix}
1&x_i&z_i\\
x_i&X_{ii}&x_i\\
z_i&x_i&z_i
\end{bmatrix}.
\]
The diagonal entries and the principal $2\times2$ minors of $G_i$ are nonnegative because $Y\succeq0$, $z_i\in[0,1]$, and $[z_i,x_i;x_i,X_{ii}]\succeq0$. Its determinant is $\det(G_i)=(1-z_i)(z_iX_{ii}-x_i^2)\ge0$. Since a symmetric $3\times3$ matrix is positive semidefinite if and only if all of its principal minors are nonnegative, we have $G_i\succeq0$.
To construct the required new row explicitly, let $P_i$ be the two-row matrix with rows $p_0$ and $p_i$, and set
\[
S_i:=P_iP_i^\top=[1,x_i;x_i,X_{ii}],
\qquad
b_i:=(z_i,x_i)^\top .
\]
Since $G_i=[S_i,b_i;b_i^\top,z_i]\succeq0$, the generalized Schur complement gives $b_i\in\Range(S_i)$ and $z_i-b_i^\top S_i^\dagger b_i\ge0$. Define $u_i=P_i^\top S_i^\dagger b_i\in\R^{n+1}$. Then
\[
P_i u_i=S_iS_i^\dagger b_i=b_i,
\qquad
\|u_i\|^2=b_i^\top S_i^\dagger b_i .
\]
Let $\delta_i:=z_i-\|u_i\|^2\ge0$. Embed the factor rows in $\R^{2n+1}$ by replacing each $p_j$ with $(p_j,0)$, and continue to write $P$ for the matrix formed by these zero-padded rows. If $e_i$ is the $i$-th coordinate vector in $\R^n$, set
\[
q_i:=(u_i,\sqrt{\delta_i}\,e_i)\in\R^{2n+1}.
\]
Then $\langle p_0,q_i\rangle=z_i$, $\langle p_i,q_i\rangle=x_i$, and $\langle q_i,q_i\rangle=z_i$. The remaining inner products involving the vectors $q_i$ are not prescribed by \eqref{eq:Decomposed-SDP-z}; define $(X_{xz})_{ji}:=\langle p_j,q_i\rangle$ and $(X_{zz})_{ij}:=\langle q_i,q_j\rangle$.
Let $P_z\coloneq[q_1,\ldots,q_n]^\top$. Then
\[
\begin{bmatrix}
1&x^\top&z^\top\\
x&X&X_{xz}\\
z&X_{xz}^\top&X_{zz}
\end{bmatrix}
=
\begin{bmatrix}
P\\
P_z
\end{bmatrix}
\begin{bmatrix}
P\\
P_z
\end{bmatrix}^{\top}
\succeq0.
\]
Moreover,
\[
\diag(X_{xz})=x,\qquad \diag(X_{zz})=z.
\]
The RLT constraints are unchanged because they only involve $Y$, and the objective value is also unchanged. Hence \eqref{eq:Shor-RLT} and \eqref{eq:Decomposed-SDP-z} have the same optimal value. 

Next, we compare \eqref{eq:Decomposed-SDP-z} with the SC-SDP relaxation \eqref{eq:intro-Decomposed-SDP}. The two formulations have the same objective function and the same constraints involving $\A(Y)$, $\B(Y)$, $Y_{11}=1$, and $Y\succeq0$. Let $(Y,z)$ be feasible for \eqref{eq:Decomposed-SDP-z}. The LMI $[z_i,x_i;x_i,X_{ii}]\succeq0$ gives $z_i\ge0$ and $z_iX_{ii}\ge x_i^2$. Hence, with the convention used in Lemma~\ref{lemma:equivalence},
\[
\sum_{i=1}^n \frac{x_i^2}{X_{ii}}
\le
\sum_{i=1}^n z_i
=e^\top z
\le k .
\]
By Lemma~\ref{lemma:equivalence}, this is equivalent to $Y\in\K$. Thus $Y$ is feasible for \eqref{eq:intro-Decomposed-SDP}.

Conversely, let $Y$ be feasible for \eqref{eq:intro-Decomposed-SDP}. By Lemma~\ref{lemma:equivalence}, $\sum_{i=1}^n x_i^2/X_{ii}\le k$.
Define
\[
z_i:=
\begin{cases}
x_i^2/X_{ii}, & X_{ii}>0,\\
0, & X_{ii}=0.
\end{cases}
\]
Since $Y\succeq0$, we have $X_{ii}\ge x_i^2$, and hence $0\le z_i\le1$. The preceding inequality gives $e^\top z\le k$, so $z\in\mathcal Z$. By construction, $z_iX_{ii}=x_i^2$ for every $i$, and therefore
\[
\begin{bmatrix}
z_i&x_i\\
x_i&X_{ii}
\end{bmatrix}
\succeq0,\qquad i\in[n].
\]
Hence $(Y,z)$ is feasible for \eqref{eq:Decomposed-SDP-z}. Therefore $\emph{val} \eqref{eq:intro-Decomposed-SDP}=\emph{val} \eqref{eq:Decomposed-SDP-z}$.

Moreover, the SDP--RLT relaxation is obtained by adding RLT constraints to the SDP relaxation, so $\emph{val} \eqref{eq:Shor}\le \emph{val} \eqref{eq:Shor-RLT}$. Finally, since \eqref{eq:MIQP} is an exact reformulation of \eqref{eq:QP}, both the SDP and SDP--RLT problems are relaxations of the original problem, and hence $\emph{val} \eqref{eq:Shor-RLT}\le \emph{val} \eqref{eq:QP}$.
\end{proof}

\begin{remark}
    Han et al.~\cite{han2022equivalence} established an elegant related reduction and equivalence result for a perspective-type SDP relaxation. Their analysis focuses on convex quadratic perspective relaxations; here we allow a nonconvex quadratic objective, incorporate the RLT constraints, and link the reduced formulation to the sparsity-cone SC-SDP model used later for computation and presolving.
\end{remark}

\subsection{Equivalence to optimal perspective 
in the unconstrained sparse ridge regression setting}\label{subsec:applications}

In this subsection, we use unconstrained sparse ridge regression to illustrate the behavior of SC-SDP in a setting where the optimal perspective relaxation is well defined. We show that SC-SDP dominates the standard perspective relaxation and is equivalent to the optimal perspective relaxation.
The sparse ridge regression problem can be formulated as the following optimization problem:
\begin{equation}\label{eq:srr-original}
v^{\mathrm{P}}
\;\coloneqq\;
\min_{x}
\left\{
\|Ax-b\|^2+\gamma \|x\|^2
\;\middle|\;
\|x\|_0\le k
\right\},
\end{equation}
where $A\in\R^{m\times n}$, $b\in\R^m$, $\gamma>0$, and $k\in[n]$ are given.
The perspective relaxation of \eqref{eq:srr-original} is
\begin{equation}\label{eq:srr-persp}
    v^{\mathrm{PR}}
\;\coloneqq\;
\min_{x,z}\;
 \left\{
    \|Ax-b\|^2+\gamma \sum_{i=1}^n \frac{x_i^2}{z_i}
    \;\middle|\;  
    z\in \mathcal{Z}
\right\},
\end{equation}
where $\mathcal{Z}=\{z\in [0,1]^n \mid  e^\top z \le k\}$.
The SC-SDP relaxation of \eqref{eq:srr-original} in the form of \eqref{eq:intro-Decomposed-SDP} can be written as
\begin{equation}\label{eq:srr-SDP}
v^{\mathrm{SC}}\;\coloneqq\;\min_{Y}\;
\Big\{
\langle C_\gamma,Y\rangle 
\;\Big|\;
Y_{11}=1,\;
Y\in \K\cap \S_+^{n+1}
\Big\},
\end{equation}
where 
\[
\begin{aligned}
C_\gamma \;\coloneqq\; C+\begin{bmatrix}
0 & 0^\top\\
0 & \gamma I_n
\end{bmatrix},\;\;
C\coloneqq \begin{bmatrix}
    b^\top\\-A^\top
\end{bmatrix}\begin{bmatrix}
b & -A
\end{bmatrix}.
\end{aligned}
\]

We first compare \eqref{eq:srr-SDP} with the standard perspective relaxation \eqref{eq:srr-persp} in the next proposition. 

\begin{proposition}\label{thm:SDP-perspective}
The SC-SDP relaxation \eqref{eq:srr-SDP} is tighter than the perspective relaxation \eqref{eq:srr-persp}, i.e., $v^{\mathrm{SC}} \ge v^{\mathrm{PR}}$.
\end{proposition}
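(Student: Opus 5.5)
Given any $Y=[1,x^\top;x,X]$ feasible for \eqref{eq:srr-SDP}, I will build a point $(x,z)$ feasible for the perspective relaxation \eqref{eq:srr-persp} whose objective value is at most $\langle C_\gamma,Y\rangle$. Minimizing over $Y$ then gives $v^{\mathrm{PR}}\le v^{\mathrm{SC}}$. The construction of $z$ mirrors the second half of the proof of Theorem~\ref{thm:equivalence}: set $z_i:=x_i^2/X_{ii}$ when $X_{ii}>0$ and $z_i:=0$ otherwise. Because $Y\succeq 0$, the $2\times2$ minors give $X_{ii}\ge x_i^2$, so $z_i\in[0,1]$. Lemma~\ref{lemma:equivalence} applied to $Y\in\K$ gives $e^\top z=\sum_i x_i^2/X_{ii}\le k$. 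Hence $z\in\mathcal Z$.

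Next I expand the SC-SDP objective:
\[
\langle C_\gamma,Y\rangle=\|b\|^2-2b^\top Ax+\langle A^\top A,X\rangle+\gamma\sum_{i=1}^n X_{ii}.
\]
I then compare the two objectives term by term.

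For the quadratic data term, $Y\succeq0$ with $Y_{11}=1$ gives $X\succeq xx^\top$ by the Schur complement. Hence $\langle A^\top A,X\rangle\ge\langle A^\top A,xx^\top\rangle=\|Ax\|^2$, and therefore $\|b\|^2-2b^\top Ax+\langle A^\top A,X\rangle\ge\|Ax-b\|^2$.

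For the ridge term, I claim $X_{ii}\ge x_i^2/z_i$ for every $i$, using the convention $x/0$ from the notation section. If $X_{ii}>0$ and $x_i\neq0$, then $z_i>0$ and $x_i^2/z_i=X_{ii}$ exactly. If $x_i=0$, then $x_i^2/z_i=0\le X_{ii}$. If $X_{ii}=0$, then $x_i=0$ by the same $2\times 2$ minor, so the term is again $0$.

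Adding the two bounds shows that the perspective objective at $(x,z)$ is at most $\langle C_\gamma,Y\rangle$, which gives the claim.

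There is no real obstacle. The only delicate point is the zero-division bookkeeping in the case $z_i=0$, and the extended-real convention already fixed in the paper handles it.
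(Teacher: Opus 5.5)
Your proof is correct and is essentially the paper's argument carried out pointwise. The paper lifts the perspective relaxation to the problem \eqref{eq:srr-persp-SDP} in $(Y,z,t)$, notes that SC-SDP is the same problem with the extra constraint $\diag(X)=t$, and concludes $v^{\mathrm{SC}}\ge v^{\mathrm{PR}}$ by feasible-set containment; your choice $z_i=x_i^2/X_{ii}$ and your bound $\langle A^\top A,X\rangle\ge\|Ax\|^2$ (from $X\succeq xx^\top$) are exactly the two facts the paper uses without comment when passing to \eqref{eq:srr-SDP-reformulated} and when equating \eqref{eq:srr-persp-SOCP} with \eqref{eq:srr-persp-SDP}, so your version is, if anything, more explicit.
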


\begin{proof}\;
The perspective relaxation \eqref{eq:srr-persp} admits the lifted reformulation
\begin{align}
    v^{\mathrm{PR}}
\;=\;&\min_{x,z,t}\;
 \left\{
    \|Ax-b\|^2+\gamma e^\top t
    \;\middle|\;  
    \begin{bmatrix}
z_i & x_i\\
x_i & t_i
\end{bmatrix}\succeq 0,\; i\in [n],\;
    z\in \mathcal{Z}
\right\}\label{eq:srr-persp-SOCP}\\
=&\min_{Y,z,t}\;
\Big\{
\langle C,Y\rangle + \gamma e^\top t
\;\Big|\;
Y_{11}=1,\;
Y \succeq 0,\;
\begin{bmatrix}
z_i & x_i\\
x_i & t_i
\end{bmatrix}\succeq 0,\ i\in [n],\;
z\in \mathcal{Z}
\Big\}.\label{eq:srr-persp-SDP}
\end{align}
On the other hand, by the equivalence \eqref{eq:kDiag-xx-general}, 
the SC-SDP relaxation \eqref{eq:srr-SDP} can be reformulated as
\begin{equation}\label{eq:srr-SDP-reformulated}
v^{\mathrm{SC}}\;\coloneqq\;\min_{Y,z,t}\;
\Big\{
\langle C,Y\rangle + \gamma e^\top t
\;\Big|\;
Y_{11}=1,\;
Y\succeq 0,\;
\begin{bmatrix}
z_i & x_i\\
x_i & t_i
\end{bmatrix}\succeq 0,\ i\in [n],\;
z\in \mathcal{Z},\; \diag(X)=t
\Big\}.
\end{equation}
The feasible set of \eqref{eq:srr-SDP-reformulated} is contained in that of \eqref{eq:srr-persp-SDP}, since it imposes the additional constraint $\diag(X)=t$. Because the two problems have the same objective, it follows that $v^{\mathrm{SC}} \ge v^{\mathrm{PR}}$.

\end{proof}

The next theorem sharpens this comparison by identifying \eqref{eq:srr-SDP} with the strongest relaxation in the perspective family.

\begin{theorem}\label{thm:srr-opt-persp}
The SC-SDP relaxation \eqref{eq:srr-SDP} is equivalent to the following optimal perspective relaxation:
\begin{equation}\label{eq:srr-opt-persp}
\max_{\mu\in\R^n_+}\;\min_{x,z}
\left\{
x^\top Q_\mu x + 2c^\top x+\sum_{i=1}^n \mu_i\;\frac{x_i^2}{z_i}
+b^\top b
\;\middle|\;
z\in\Z,\;
 Q_\mu \succeq 0
\right\},
\end{equation}
where $Q_\mu\coloneqq A^\top A+\gamma I-\Diag(\mu)$ and $c\coloneqq -A^\top b$.
\end{theorem}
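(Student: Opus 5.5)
The plan is to prove the equivalence through conic duality. I will first dualize \eqref{eq:srr-SDP} and use Proposition~\ref{prop:Kstar-explicit} to write the dual explicitly. I will then show that, for each fixed $\mu$, the inner minimization in \eqref{eq:srr-opt-persp} has a convex dual with exactly the same form. Strong duality for \eqref{eq:srr-SDP} is easy to check: $Y=[1,0;0,\varepsilon I]$ with $0<\varepsilon$ and $k\varepsilon<1$ is strictly feasible, since $Y\succ0$ and $\D(Y)=[k,0;0,\varepsilon I]\succ0$. Hence $v^{\mathrm{SC}}$ equals the value of the dual
\[
\max\Big\{\alpha \;\Big|\; C_\gamma-\D^*(T)-S-\alpha E_{11}=0,\; T\succeq0,\; S\succeq0\Big\}.
\]

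By Proposition~\ref{prop:Kstar-explicit}, $\D^*(T)$ ranges over the matrices $[k\beta, w^\top; w, \Diag(\mu)]$ with $\beta\ge0$, $\mu\ge0$ and $w_i^2\le\beta\mu_i$. We have $C_\gamma=[b^\top b,\; c^\top;\; c,\; A^\top A+\gamma I]$ with $c=-A^\top b$, so the condition $S\succeq0$ reads
\[
\begin{bmatrix} b^\top b-k\beta-\alpha & (c-w)^\top\\ c-w & Q_\mu\end{bmatrix}\succeq0 .
\]
Thus $v^{\mathrm{SC}}=\max_{\mu\ge0}\Phi(\mu)$. Here $\Phi(\mu)$ is the largest $\alpha$ for which there exist $\beta\ge0$ and $w$ with $w_i^2\le\beta\mu_i$ satisfying this LMI. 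The LMI forces $Q_\mu\succeq0$, so $\Phi(\mu)=-\infty$ whenever $Q_\mu\not\succeq0$, which matches the constraint $Q_\mu\succeq0$ in \eqref{eq:srr-opt-persp}.

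It remains to show, for fixed $\mu\ge0$ with $Q_\mu\succeq0$, that $\Phi(\mu)$ equals the inner value
\[
\Psi(\mu)=\min_{x,\;z\in\Z}\Big\{x^\top Q_\mu x+2c^\top x+\sum_i\mu_i x_i^2/z_i+b^\top b\Big\}.
\]
The easy direction is weak duality. Take $(\alpha,\beta,w)$ feasible. The LMI gives $x^\top Q_\mu x+2(c-w)^\top x+b^\top b-k\beta-\alpha\ge0$ for all $x$. The $2\times2$ bound $\mu_i x_i^2/z_i+\beta z_i\ge 2|w_ix_i|\ge 2w_ix_i$, valid because $w_i^2\le\beta\mu_i$, combined with $\beta e^\top z\le k\beta$, then gives $\Psi(\mu)\ge\alpha$.

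For the reverse inequality I will dualize $\Psi(\mu)$ directly. I write $\mu_i x_i^2/z_i=\sup_{\beta_i\ge0,\,w_i^2\le\beta_i\mu_i}\{2w_ix_i-\beta_i z_i\}$, the conjugate form of the perspective of $\mu_i t^2$. The objective is then convex in $(x,z)$ and concave in $(w,\beta)$. I apply a minimax theorem, Sion's, using compactness of $\Z$ after bounding $x$ via $Q_\mu\succeq0$ plus the perspective term, or a limiting argument if that bound fails. Minimizing over $x$ produces the Schur-complement LMI in $(c-w,Q_\mu)$. Minimizing $-\sum_i\beta_i z_i$ over $\Z$ produces $-k\beta$ if all $\beta_i$ are equal to a common $\beta$, but in general it also produces extra terms from the bounds $z_i\le1$.

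\textbf{Main obstacle.} The main obstacle is exactly this mismatch: the dual of the perspective side carries individual $\beta_i$ together with multipliers for $z\le1$, while the SC-SDP dual has a single $\beta$. I would first try to show that the upper bounds $z_i\le1$ can be dropped without changing $\Psi(\mu)$. The mechanism should mirror the primal argument in Theorem~\ref{thm:equivalence}, where $Y\succeq0$ forces $z_i=x_i^2/X_{ii}\le1$. In the dual, any multiplier on $z_i\le1$ should be absorbable: increasing $\mu_i$ and decreasing the $(i,i)$ entry of $Q_\mu$ trades the box multiplier against $\gamma$. Since the outer maximization is over $\mu$, such a shift can be compensated there. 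Once the box is removed, only $e^\top z\le k$ remains, whose multiplier is the common $\beta$. The two duals then coincide, giving $\Phi=\Psi$ and hence $v^{\mathrm{SC}}$ equal to the value of \eqref{eq:srr-opt-persp}.
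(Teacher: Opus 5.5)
Your dual computation is correct. Slater holds, $v^{\mathrm{SC}}=\max_{\mu\ge0}\Phi(\mu)$, and your AM--GM argument gives $\Phi(\mu)\le\Psi(\mu)$ for every $\mu$. The gap is in the reverse step.

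\textbf{The pointwise identity is false.} The identity $\Phi(\mu)=\Psi(\mu)$ that you set out to prove fails, and so does the claim that the bounds $z_i\le1$ can be dropped from $\Psi(\mu)$ for fixed $\mu$. Take $n=3$, $k=2$, $A=I_3$, $b=e_1$, $\gamma=1$, so that $c=-e_1$ and $b^\top b=1$, and take $\mu=e_1$, so that $Q_\mu=\Diag(1,2,2)\succ0$.
\begin{itemize}
\item Since $z_1\le1$ gives $x_1^2/z_1\ge x_1^2$, we get $\Psi(e_1)=\min_{x_1}\{2x_1^2-2x_1+1\}=1/2$.
\item In $\Phi(e_1)$, the constraint $w_i^2\le\beta\mu_i$ forces $w_2=w_3=0$. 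The Schur complement then gives $\alpha\le 1-2\beta-(1+w_1)^2$ with $w_1^2\le\beta$. Choosing $w_1=-\sqrt\beta=:-s$ yields $\Phi(e_1)=\max_{0\le s\le 1}\{2s-3s^2\}=1/3$.
\item Removing the bounds $z_i\le1$ from $\Psi(e_1)$ gives exactly $1/3$. So for this $\mu$ the box is precisely what separates the two values, and it cannot be dropped.
\end{itemize}
The theorem itself still holds here, since $v^{\mathrm{SC}}=\Phi(0)=\Psi(0)=1/2$.

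Your absorption heuristic changes $\mu$, so at best it would give $\sup_\mu\Psi\le\sup_\mu\Phi$, not a pointwise equality, and it is not carried out. Moreover, in this example $\Phi(te_1)=1-2/(4-t)$ is decreasing in $t$, so any compensating shift must lower $\mu_1$, not raise it.

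\textbf{How to close the argument.} The missing inequality $\sup_\mu\Psi(\mu)\le v^{\mathrm{SC}}$ needs no second dualization. It is primal weak duality, and that is where the box is genuinely redundant. Fix $\mu\ge0$ with $Q_\mu\succeq0$ and any $Y=[1,x^\top;x,X]$ feasible for \eqref{eq:srr-SDP}. Set $z_i:=x_i^2/X_{ii}$, with $z_i:=0$ if $X_{ii}=0$. Lemma~\ref{lemma:equivalence} gives $e^\top z\le k$, and $Y\succeq0$ gives $X_{ii}\ge x_i^2$, so $z\in\Z$. Using $X\succeq xx^\top$, $Q_\mu\succeq0$ and $X_{ii}\ge x_i^2/z_i$,
\[
\langle C_\gamma,Y\rangle=b^\top b+2c^\top x+\langle Q_\mu,X\rangle+\sum_i\mu_iX_{ii}\ge b^\top b+2c^\top x+x^\top Q_\mu x+\sum_i\mu_i\frac{x_i^2}{z_i}\ge\Psi(\mu).
\]
Combined with your two facts, this gives $v^{\mathrm{SC}}=\max_\mu\Phi(\mu)\le\sup_\mu\Psi(\mu)\le v^{\mathrm{SC}}$. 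The outer maximum is attained at the $\mu$ of an optimal SC-SDP dual solution.

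\textbf{Comparison with the paper.} The paper avoids the mismatch differently. It Lagrangian-dualizes the $z$-lifted problem \eqref{eq:srr-SDP-reformulated-3}, attaching $\mu_i$ to $X_{ii}\ge x_i^2/z_i$. It keeps $Y\succeq0$ and $z\in\Z$ (box included) in the inner problem, so its dual function is $\Psi(\mu)$ itself, and Slater gives the equality in one step. Your full conic dual instead eliminates $z$ and produces a different function $\Phi$, which matches $\Psi$ in supremum but not pointwise.
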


\begin{proof}\;
Starting from \eqref{eq:srr-SDP-reformulated}, eliminate $t$ to obtain
\begin{equation}\label{eq:srr-SDP-reformulated-3}
\min_{Y,z}\;
\Big\{
\langle C,Y\rangle + \gamma e^\top \diag(X)
\;\Big|\;
Y_{11}=1,\;
Y\succeq 0,\;
z\in \mathcal{Z},\; X_{ii}\geq \frac{x_i^2}{z_i},\; i\in [n]
\Big\}.
\end{equation}
Problem \eqref{eq:srr-SDP-reformulated-3} admits a strictly feasible point, so strong duality holds. Its dual is exactly the optimal perspective relaxation \eqref{eq:srr-opt-persp} with the constant \(b^\top b\) retained.
\end{proof}

\section{Presolving with SC-SDP}\label{sec:presolving}

For sparsity-constrained problems such as \eqref{eq:QP} and \eqref{eq:MIQP}, the main combinatorial difficulty is to identify the support of the solution. A strong relaxation is therefore useful not only as a source of lower bounds, but also as a source of certificates that fix variables or exclude impossible support patterns before extensive branching. This section shows how the SC-SDP dual can be used for this purpose.

\medskip
\noindent\tb{Restricted relaxations.}
A direct way to test a support decision is to impose the corresponding restriction and solve the restricted relaxation. Although SC-SDP eliminates the explicit support vector $z$, the same sparsity-cone construction and algorithmic framework extend naturally to these restricted subproblems; see Appendix~\ref{app:support-restrictions}. In sparse ridge regression, the tightness results from Subsection~\ref{subsec:applications} imply that the resulting SC-SDP support-restriction bounds dominate the standard perspective bounds and match the optimal perspective bounds. Thus this route provides strong node-restriction bounds, with strong branching as a special case.

\medskip
\noindent\tb{Root-dual certificates.}
The difficulty is that support-restriction bounds require solving one relaxation for each candidate restriction, and the number of candidates can grow quickly when screening larger support patterns. Our alternative is to extract presolving certificates from a single optimal dual solution of the root SC-SDP relaxation. This is the main contribution of this section: Subsection~\ref{subsec:variable-fixing} derives dual-fixing rules for individual variables, Subsection~\ref{subsec:screening-cuts} derives screening cuts for larger support patterns, and Subsection~\ref{subsec:dual-refinement} introduces a dual-refinement step that strengthens these certificates without changing the SC-SDP lower bound.

\begin{remark}\label{rmk:node-certificates}

The same certificate construction can also be applied at a child node of a branch-and-bound tree. In this case, the node restrictions are incorporated into the SC-SDP relaxation, and the resulting node-level dual solution yields fixing and screening certificates with respect to the corresponding node bounds.

\end{remark}

\subsection{Variable fixing}\label{subsec:variable-fixing}
In this subsection, we show how to derive variable-fixing rules from an optimal dual solution of the SC-SDP relaxation \eqref{eq:intro-Decomposed-SDP}. These rules can then be used to reduce the search space of the original problem \eqref{eq:MIQP}.
The details are provided in the next theorem. 

\begin{theorem}[Dual fixing]\label{thm:dual-fixing}
Let $v^{\mathrm{UB}}$ be any upper bound on the optimal value $v^{\mathrm{P}}$ of~\eqref{eq:MIQP}.
Let $(\alpha, \Gamma, \Lambda, W, S)$ be feasible dual variables of~\eqref{eq:dual-Decomposed-SDP}
with objective value $v^{\mathrm{LB}}$. 
Then, for any $p\in[n]$, any optimal solution of~\eqref{eq:MIQP} $(x^*,z^*)$ satisfies
\[
z_p^*=\begin{cases}
    0,&\text{if}
    \quad w_{[k]}-w_p> v^{\mathrm{UB}}-v^{\mathrm{LB}},\\
    1,& \text{if}
    \quad
    w_p-w_{[k+1]}> v^{\mathrm{UB}}-v^{\mathrm{LB}},
\end{cases}
\]
where $w_i := {(W_{12})_i^2}/{(W_{22})_{ii}}$ if $(W_{22})_{ii}>0$, and $w_i:=0$ otherwise.
\end{theorem}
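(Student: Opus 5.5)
The plan is to evaluate the dual equality constraint of \eqref{eq:dual-Decomposed-SDP} at the rank-one lift of an optimal solution of \eqref{eq:MIQP}, which bounds the objective from below by $v^{\mathrm{LB}}$ plus a support-dependent quantity. Each fixing rule then follows by contradiction against $v^{\mathrm{UB}}$.

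\textbf{Step 1: lower bound at the rank-one lift.} Fix an optimal $(x^*,z^*)$ of \eqref{eq:MIQP} and let $T:=\{i: z^*_i=1\}$, so that $|T|\le k$ and $\operatorname{supp}(x^*)\subseteq T$. Set $Y=[1;x^*][1;x^*]^\top$. Taking the inner product of the dual equality with $Y$ gives
\[
v^{\mathrm{P}}=\langle \bQ,Y\rangle=\alpha+\langle\Gamma,\A(Y)\rangle+\langle\Lambda,\B(Y)\rangle+\langle W,\D(Y)\rangle+\langle S,Y\rangle .
\]
Since $x^*$ satisfies $Ax^*=b$, we have $\A(Y)=0$. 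Since $Bx^*\ge d$, the matrix $\B(Y)$ is an outer product of a nonnegative vector with itself, hence $\B(Y)\ge0$, and with $\Lambda\ge 0$ this gives $\langle\Lambda,\B(Y)\rangle\ge0$. Finally $S\succeq0$ and $Y\succeq0$ give $\langle S,Y\rangle\ge 0$. Using $\alpha=v^{\mathrm{LB}}$, this yields $v^{\mathrm{P}}\ge v^{\mathrm{LB}}+\langle W,\D(Y)\rangle$.

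\textbf{Step 2: evaluating $\langle W,\D(Y)\rangle$.} Write $W=[\omega, g^\top; g, W_{22}]$. Here $\D(Y)=[k, x^{*\top}; x^*, \Diag(x^*\circ x^*)]$, so
\[
\langle W,\D(Y)\rangle=k\omega+\sum_{i\in T}\bigl(2g_ix_i^*+(W_{22})_{ii}(x_i^*)^2\bigr).
\]
Minimizing each scalar quadratic gives a term $\ge -w_i$. When $(W_{22})_{ii}=0$, positive semidefiniteness of $W$ forces $g_i=0$, so the term is $0=-w_i$. Moreover, $W\succeq0$ implies $\omega(W_{22})_{ii}\ge g_i^2$, hence $0\le w_i\le\omega$ for all $i$, and therefore $k\omega\ge\sum_{i=1}^k w_{[i]}$. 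Combining,
\[
v^{\mathrm{UB}}-v^{\mathrm{LB}}\;\ge\; v^{\mathrm{P}}-v^{\mathrm{LB}}\;\ge\;\sum_{i=1}^k w_{[i]}-\sum_{i\in T}w_i .
\]
This combination of the $2\times2$ PSD minors of $W$ with the ordering of the $w_i$ is the key step; the rest is combinatorial bookkeeping.

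\textbf{Step 3: the two fixing rules by contradiction.}
\begin{itemize}
\item Suppose $w_{[k]}-w_p>v^{\mathrm{UB}}-v^{\mathrm{LB}}$ but $z_p^*=1$, i.e.\ $p\in T$. Then $T\setminus\{p\}$ has at most $k-1$ elements, and $w\ge0$, so $\sum_{i\in T}w_i\le w_p+\sum_{i=1}^{k-1}w_{[i]}$. The right-hand side of Step 2 is then at least $w_{[k]}-w_p$, a contradiction.
\item Suppose $w_p-w_{[k+1]}>v^{\mathrm{UB}}-v^{\mathrm{LB}}\ge0$ but $z_p^*=0$, i.e.\ $p\notin T$. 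Here $w_p>w_{[k+1]}$, so $p$ lies among the top $k$ indices (under a tie-consistent ordering). The largest sum of at most $k$ weights avoiding $p$ is then $\sum_{i=1}^{k+1}w_{[i]}-w_p$. Hence $\sum_{i=1}^k w_{[i]}-\sum_{i\in T}w_i\ge w_p-w_{[k+1]}$, again a contradiction.
\end{itemize}

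The only delicate points I expect are this tie and ordering bookkeeping and the degenerate case $(W_{22})_{ii}=0$, which Step 2 handles.
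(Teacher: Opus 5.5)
Your proposal is correct and takes essentially the same route as the paper. Both arguments evaluate the dual equality at the rank-one lift of an optimal solution to obtain $v^{\mathrm{UB}}-v^{\mathrm{LB}}\ge kW_{11}-\sum_{i\in T}w_i$, bound each per-coordinate term below by $-w_i$ using the $2\times 2$ minors of $W$, and then derive each rule by contradiction. The one difference is that you write out the second (fix-to-one) case explicitly, including the tie bookkeeping showing that $w_p>w_{[k+1]}$ forces $p$ into the top $k$; the paper dismisses that case as ``proved similarly.''
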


\begin{proof}\;
Let $(x,z)$ be any optimal solution of \eqref{eq:MIQP}. Define $Y=[1, x^\top; x, xx^\top] $ as the corresponding rank-one lifted matrix.  
By the dual feasibility of $(\alpha, \Gamma, \Lambda, W, S)$ and primal feasibility of $Y$, we have
\begin{align*}
v^{\mathrm{UB}} 
&\ge v^{\mathrm{P}} 
= x^\top Q x + 2c^\top x 
= \langle \bQ, Y \rangle 
= \alpha Y_{11}+\langle\A^*(\Gamma)+\B^*(\Lambda)+\D^*(W)+S,Y\rangle\\
&\ge v^{\mathrm{LB}}+\langle W,\D(Y)\rangle
=v^{\mathrm{LB}}+kW_{11}
+\sum_{i=1}^n\left((W_{22})_{ii}x_i^2+2(W_{12})_i x_i\right).
\end{align*} 

	 We prove the first case by contradiction.
	 Fix any $p\in[n]$.
Suppose $z_p=1$ and $w_{[k]}-w_p> v^{\mathrm{UB}}-v^{\mathrm{LB}}\geq 0$. 
Let \(J:=\{i\in[n]\mid z_i=1\}\). Then \(|J|\le k\), \(p\in J\), and \(i\notin J\) implies \(x_i=0\). Moreover, for every \(i\),
\[
(W_{22})_{ii}x_i^2+2(W_{12})_i x_i\ge -w_i,
\]
where the case \((W_{22})_{ii}=0\) follows from \(W\succeq0\), which implies \((W_{12})_i=0\).
Therefore, we have
\begin{align*}
v^{\mathrm{UB}} - v^{\mathrm{LB}}
&\geq k W_{11}+\sum_{i=1}^n\left((W_{22})_{ii}x_i^2+2(W_{12})_i x_i\right)
\geq k W_{11}-\sum_{i\in J}w_i\\
&\geq k W_{11}-\sum_{i=1}^{k} w_{[i]}+(w_{[k]}-w_p)
\geq w_{[k]}-w_p>v^{\mathrm{UB}}-v^{\mathrm{LB}},
\end{align*}
where the third inequality uses \(|J|\le k\) and \(p\in J\), and the fourth inequality uses \(W_{11}\geq w_i\) for all \(i\). This leads to a contradiction, and hence $z_p=0$. The second case is proved similarly by contradiction.
\end{proof}

\subsection{Screening cuts}\label{subsec:screening-cuts}

When dual fixing alone does not identify any variable, we can still generate valid cuts from the same dual solution. To state these screening cuts, we first recall the notion of an SCG-tuple from \cite{tan2025screening}.

\begin{definition}[\bf SCG-tuple]\label{def:scg-tuple}
Given two disjoint index subsets $S, N \subseteq [n]$ with $|S| \le k$, let
$
R := [n] \setminus (S \cup N)
$
denote the remaining index set. For any index set $C \subseteq R$, we say that
$(S,N,C)$ is an \emph{SCG-tuple} with respect to $w$ if
the index set $C$ is constructed as
\[
C := \{ i \in R \mid w_i \ge (w_R)_{[c]} \},
\]
where
$
c := \min\{k - |S|,\ |R|\}.
$
In particular, when $c = 0$, we set $C := \emptyset$.
Throughout this paper, for an SCG-tuple $(S,N,C)$, we refer to $S$ as the
\emph{candidate support set}, $N$ as the \emph{candidate non-support set}, and
$C$ as the \emph{complement support set} relative to $S$, which is determined
based on $S$, $N$, and $w$.
\end{definition}

Using the SCG-tuple, we can obtain the following screening cut from the dual solution of \eqref{eq:intro-Decomposed-SDP}.

\begin{theorem}[Screening cuts]\label{thm:screening-cuts}
Under the same assumptions as in Theorem~\ref{thm:dual-fixing}, let $(S,N,C)$ be an SCG-tuple with respect to $w$. If
\[
\sum_{i=1}^k w_{[i]} - \sum_{i\in S} w_i - \sum_{i\in C} w_i \;>\; v^{\mathrm{UB}} - v^{\mathrm{LB}},
\]
then the following screening cut is valid for all optimal solutions of~\eqref{eq:MIQP}:
\[
\sum_{i\in S} z_i + \sum_{i\in N} (1 - z_i) \;\le\; |S| + |N| - 1 .
\]
\end{theorem}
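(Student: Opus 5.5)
We mirror the proof of Theorem~\ref{thm:dual-fixing} and argue by contradiction. Suppose some optimal solution $(x,z)$ of \eqref{eq:MIQP} violates the cut. Since $z\in\{0,1\}^n$, this means $z_i=1$ for all $i\in S$ and $z_i=0$ for all $i\in N$. Let $J:=\{i\in[n]\mid z_i=1\}$. Then $S\subseteq J$, $J\cap N=\emptyset$, $|J|\le k$, and $x_i=0$ for $i\notin J$.

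The first step repeats the chain of inequalities in the proof of Theorem~\ref{thm:dual-fixing}. Using the rank-one lift $Y=[1;x][1;x]^\top$, dual feasibility, and the bound $(W_{22})_{ii}x_i^2+2(W_{12})_ix_i\ge -w_i$ (including the degenerate case $(W_{22})_{ii}=0$, where $W\succeq0$ forces $(W_{12})_i=0$), we get
\[
v^{\mathrm{UB}}-v^{\mathrm{LB}}\ \ge\ kW_{11}-\sum_{i\in J}w_i
= kW_{11}-\sum_{i\in S}w_i-\sum_{i\in J\setminus S}w_i .
\]

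The second step bounds $\sum_{i\in J\setminus S}w_i$. Since $J\setminus S\subseteq R$ and $|J\setminus S|\le k-|S|$, also $|J\setminus S|\le c=\min\{k-|S|,|R|\}$. Because each $w_i\ge 0$, the sum of $w$ over at most $c$ indices of $R$ is at most the sum of the $c$ largest values $(w_R)_{[1]},\dots,(w_R)_{[c]}$. We then claim $\sum_{i\in C}w_i$ is at least that sum.

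This claim is the main obstacle, because Definition~\ref{def:scg-tuple} defines $C$ by a threshold, so $C$ may contain more than $c$ indices when there are ties. This does not hurt: $C$ contains at least $c$ indices achieving the top $c$ values, and any extra indices have $w_i\ge0$, so the inequality still holds. When $c=0$, we have $J\setminus S=\emptyset$ and $C=\emptyset$, so the claim is trivial. This yields
\[
v^{\mathrm{UB}}-v^{\mathrm{LB}}\ \ge\ kW_{11}-\sum_{i\in S}w_i-\sum_{i\in C}w_i .
\]

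The third step uses $W_{11}\ge w_i$ for every $i$. This follows from the $2\times2$ principal minors of $W\succeq0$: the matrix $[W_{11},(W_{12})_i;(W_{12})_i,(W_{22})_{ii}]$ is positive semidefinite. Hence $kW_{11}\ge\sum_{i=1}^k w_{[i]}$. Combining with the previous display gives
\[
v^{\mathrm{UB}}-v^{\mathrm{LB}}\ \ge\ \sum_{i=1}^k w_{[i]}-\sum_{i\in S}w_i-\sum_{i\in C}w_i ,
\]
which contradicts the hypothesis of the theorem. Therefore every optimal solution satisfies the screening cut.
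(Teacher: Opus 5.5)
Your proof is correct and follows essentially the same route as the paper: the rank-one lift and dual stationarity give $v^{\mathrm{UB}}-v^{\mathrm{LB}}\ge kW_{11}-\sum_{i\in J}w_i$. A violated cut forces $J\setminus S\subseteq R$ with $|J\setminus S|\le c$, so $\sum_{i\in J\setminus S}w_i\le\sum_{i\in C}w_i$, and together with $kW_{11}\ge\sum_{i=1}^k w_{[i]}$ this yields the contradiction; your explicit treatment of ties in the threshold definition of $C$ is a small, welcome tightening of the paper's phrase ``$C$ contains the $c$ largest scores.''
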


\begin{proof}\;
Let $(x,z)$ be an optimal solution of \eqref{eq:MIQP}, set $J:=\{i\in[n]\mid z_i=1\}$, and define $Y=[1,x^\top;x,xx^\top]$. Since $z$ is binary and $e^\top z\le k$, we have $|J|\le k$. Also, $\langle \bQ,Y\rangle=v^{\mathrm{P}}\le v^{\mathrm{UB}}$. Since $Y$ is feasible for \eqref{eq:intro-Decomposed-SDP}, and since $v^{\mathrm{LB}}=\alpha$, the dual stationarity equation gives $\langle\bQ,Y\rangle-v^{\mathrm{LB}}\ge\langle W,\D(Y)\rangle$, where
\[
\langle W,\D(Y)\rangle
=kW_{11}+\sum_{i=1}^n (W_{22})_{ii}x_i^2+2\sum_{i=1}^n (W_{12})_i x_i .
\]
Here the inequality uses $\A(Y)=0$, $\B(Y)\ge0$, $\Lambda\ge0$, and $Y,S\succeq0$.
For each $i$, the principal $2\times2$ submatrix of $W$ indexed by $(1,i+1)$ is positive semidefinite. Hence $(W_{22})_{ii}\ge0$ and $W_{11}\ge w_i$. Moreover,
\[
(W_{22})_{ii}x_i^2+2(W_{12})_i x_i\ge -w_i .
\]
Indeed, this follows by completing the square when $(W_{22})_{ii}>0$, while if $(W_{22})_{ii}=0$, positive semidefiniteness implies $(W_{12})_i=0$ and both sides are zero. If $i\notin J$, then $z_i=0$ and the complementarity constraint in \eqref{eq:MIQP} gives $x_i=0$, so the corresponding term is zero. Therefore
\[
v^{\mathrm{UB}}-v^{\mathrm{LB}}
\ge
kW_{11}-\sum_{i\in J}w_i
\ge
\sum_{i=1}^k w_{[i]}-\sum_{i\in J}w_i .
\]

Suppose, to obtain a contradiction, that this optimal solution violates the proposed cut. Since each term in the left-hand side of the cut is at most one, violation is possible only if $z_i=1$ for all $i\in S$ and $z_i=0$ for all $i\in N$. Thus $S\subseteq J$ and $J\cap N=\emptyset$. Let $R=[n]\setminus(S\cup N)$ and $L:=J\backslash S$. Then $J=S\cup L$ and $|L|\le k-|S|$. By the definition of an SCG-tuple, $C$ contains the $c=\min\{k-|S|,|R|\}$ largest scores in $R$. Since $L\subseteq R$, $|L|\le c$, and $w_i\ge0$, we have
\[
\sum_{i\in L}w_i\le \sum_{i\in C}w_i .
\]
Consequently, $\sum_{i\in J}w_i=\sum_{i\in S}w_i+\sum_{i\in L}w_i\le \sum_{i\in S}w_i+\sum_{i\in C}w_i$, and hence
\[
v^{\mathrm{UB}}-v^{\mathrm{LB}}
\ge
\sum_{i=1}^k w_{[i]}-\sum_{i\in S}w_i-\sum_{i\in C}w_i
>
v^{\mathrm{UB}}-v^{\mathrm{LB}},
\]
which is impossible. Therefore no optimal solution can violate the cut, so the cut is valid for all optimal solutions of \eqref{eq:MIQP}.
\end{proof}

\subsection{Dual refinement}\label{subsec:dual-refinement}

The presolving rules in Theorems~\ref{thm:dual-fixing} and \ref{thm:screening-cuts} both depend on the semidefinite multiplier $W\succeq0$ in the dual problem \eqref{eq:dual-Decomposed-SDP} through the scores
\[
w_i=\frac{W_{1,i+1}^2}{W_{i+1,i+1}},\qquad i\in[n],
\]
with the zero-denominator convention used in Theorem~\ref{thm:dual-fixing}. Since the dual optimal solution is generally not unique, different optimal certificates may give the same lower bound but different presolving strength. We therefore introduce a dual-refinement step to select a certificate that is more useful for presolving.

Fix an optimal triple $(\bar\alpha,\bar\Gamma,\bar\Lambda)$ and define $\bar C:=\bQ-\A^*(\bar\Gamma)-\B^*(\bar\Lambda)-\bar\alpha E_{11}$. A natural refinement is to select the remaining dual certificate by maximizing the score separation

\begin{equation}
\label{eq:dual-refinement-ideal}
\max\left\{
\sum_{i=1}^k w_{[i]}-\sum_{i=k+1}^n w_{[i]}
\;\middle|\;
\bar C-\D^*(W)\succeq0,\ W\succeq0
\right\}.
\end{equation}
This problem is difficult to solve directly because the scores are fractional and the leading index set is unknown. To derive a tractable form, write
\[
\D^*(W)=
\begin{bmatrix}
k\tau & z^\top\\
z & \Diag(d)
\end{bmatrix}\in\K^*,
\]
where $\tau=W_{11}$, $z_i=W_{1,i+1}$, and $d_i=W_{i+1,i+1}$. Then $w_i=z_i^2/d_i\le\tau$. A leading score with $0<w_i<\tau$ can be saturated by reducing $d_i$ to $z_i^2/\tau$, which only decreases the corresponding diagonal entry of the arrow matrix and therefore preserves feasibility. Hence, after this saturation step, the leading scores are taken at the common upper envelope $\tau$, and we remove the unknown ordering as follows: 
\begin{equation*}
\sum_{i=1}^k w_{[i]}-\sum_{i=k+1}^n w_{[i]}
=2\sum_{i=1}^k w_{[i]}-\sum_{i=1}^n w_i
=2k\tau-\sum_{i=1}^n w_i .
\end{equation*}
Introducing variables $t_i$ with $w_i\le t_i\le\tau$, and we refine the dual certificate by solving
\begin{equation}
\label{eq:dual-refinement-uqp}
\min\left\{-2k\tau + \sum_{i=1}^n t_i
\;\middle|\;
\begin{aligned}
&\bar C-
\begin{bmatrix}
k\tau & z^\top\\
z & \Diag(d)
\end{bmatrix}
\succeq 0,\\
&\ d\ge 0,\ z_i^2\le t_i d_i,\ \tau \geq t_i \ge 0,\ i\in[n]
\end{aligned}
\right\}.
\end{equation}
\begingroup
\makeatletter
\def\@currentlabel{\theequation}
\label{eq:dual-refinement-uqp-reduced}
\makeatother
\endgroup

The refinement keeps the SC-SDP bound unchanged because $(\bar\alpha,\bar\Gamma,\bar\Lambda)$ is fixed and only the remaining dual certificate is reselected. Compared with the ordered formulation \eqref{eq:dual-refinement-ideal}, \eqref{eq:dual-refinement-uqp-reduced} is tractable: the ordering is removed, and the constraints \(z_i^2\le t_i d_i\) are rotated second-order cone representable. We use \eqref{eq:dual-refinement-uqp-reduced} later in the numerical experiments when constructing fixing certificates from an optimal dual solution.

\section{Algorithm framework for solving SC-SDP}\label{sec:algorithm}

This section develops the algorithmic framework for solving the SC-SDP relaxation \eqref{eq:intro-Decomposed-SDP}. In Subsection~\ref{subsec:alm}, we introduce the augmented Lagrangian framework. In Subsections~\ref{subsec:low-rank} and \ref{subsec:convex-lifting}, we describe the low-rank and convex lifting phases for solving the ALM subproblems. The projection onto $\K$ and the Moreau-based recovery of the $\K^*$ multiplier update are given in Section~\ref{sec:cone-geometry}; in Subsection~\ref{subsec:projection-F}, we derive the remaining projection onto $\F$.

\subsection{Augmented Lagrangian method}\label{subsec:alm}
We first equivalently express \eqref{eq:intro-Decomposed-SDP} in the following splitting form:
\begin{equation}
\label{eq:Decomposed-SDP-2}
\min_{Y,Z}\left\{
\langle \bQ,Y \rangle + \delta_{\F}(Y)+\delta_{\K}(Z)  \;\middle|\;  
 \B(Y)\geq 0,\; Y-Z=0
\right\},
\end{equation}
where $\delta_{\cal S}$ denotes the indicator function of the set ${\cal S}$, and the set $\F$ is defined as
\begin{equation}\label{eq:F-set}
    \F\coloneq\left\{Y\in\S^{n+1} : \A(Y)=0,\; Y_{11}=1,\; Y\in \S^{n+1}_+\right\}.
\end{equation}
The reduced augmented Lagrangian function of \eqref{eq:Decomposed-SDP-2} is
\begin{align}
\label{eq:reduced-AL-Decomposed-SDP}
L_\sigma(Y,\Lambda,W)
=& \langle \bQ,Y \rangle  + \frac{\sigma}{2}
\|(\sigma^{-1}\Lambda- \B(Y) )_+\|^2 + \frac{\sigma}{2}
\|\Pi_{\K^*}(\sigma^{-1}W- Y)\|^2 + \delta_{\F}(Y),
\end{align}
where $\Lambda\geq 0$ and $W\in \K^*$ are the Lagrange multipliers associated with the constraints $\B(Y)\geq 0$ and $Y-Z=0$, respectively, and $\sigma>0$ is the penalty parameter.
The augmented Lagrangian method for solving \eqref{eq:Decomposed-SDP-2} is summarized in Algorithm 1.

\begin{description}[itemsep=0pt]
\item[Algorithm 1:] Choose $Y_0\in \F$, $\Lambda_0\geq 0$, $W_0\in \K^*$, and $\sigma_0>0$. Set $k=0$.
\item[Step 1:] Compute $Y_{k+1}=\arg\min\{L_{\sigma_k}(Y,\Lambda_k,W_k): Y\in \F\}$. \hfill (ALM-sub)
\item[Step 2:] Update $\Lambda_{k+1}=(\Lambda_k-\sigma_k\B(Y_{k+1}))_+$.
\item[Step 3:] Update $W_{k+1}=\Pi_{\K^*}(\sigma_k^{-1}W_k-Y_{k+1})$.
\item[Step 4:] Choose $\sigma_{k+1}$, set $k\leftarrow k+1$, and return to Step 1.
\end{description}

\begin{remark}
Algorithm~1 follows a double-loop augmented Lagrangian framework. Under the standard assumptions used in inexact ALM analysis, if each ALM subproblem is solved to progressively higher accuracy so that the 
accumulated inexactness error is summable,
then the outer ALM iterates generate a primal-dual sequence whose accumulation points satisfy the KKT conditions of \eqref{eq:intro-Decomposed-SDP}; see \cite{RiNNAL} for a detailed analysis of this classical inexact ALM setting.

At the subproblem level, each ALM subproblem is solved by a hybrid strategy consisting of a low-rank phase and a convex lifting phase. Although the low-rank phase is nonconvex, the convex lifting step guarantees a monotonic decrease of the augmented Lagrangian value. As shown in \cite{lee2022escaping,RiNNALPOP}, this mechanism ensures global convergence of the subproblem iterates to a minimizer of (ALM-sub), while automatically identifying the true optimal rank.
\end{remark}

The next two subsections briefly describe how to implement the low-rank and convex lifting phases for solving the subproblem (ALM-sub) efficiently.

\subsection{Low-rank phase}\label{subsec:low-rank}

In the low-rank phase, we represent a rank-$r$ solution of (ALM-sub) in the factorized form
\[
Y=\begin{bmatrix} 1& x^\top\\ x & X\end{bmatrix}=\begin{bmatrix}
    e_1^\top\\R 
\end{bmatrix}\begin{bmatrix}
    e_1 & R^\top
\end{bmatrix} = \widehat{R}\widehat{R}^\top,
\]
where $R\coloneqq[R_1,\dots,R_n]^\top\in\R^{n\times r}$ and $\widehat{R}\coloneqq [e_1^\top;R]$ with $e_1$ being the first standard unit vector in $\R^r$. Thus, (ALM-sub) is equivalent to the following nonconvex problem:
\begin{equation}\label{eq:LR-Decomposed-complex}
\min_{R}\left\{
    f_r(R)\coloneq L_{\sigma}(\widehat{R}\widehat{R}^\top,\Lambda,W)
\;\middle|\;
\widehat R\widehat{R}^\top\in \F
\right\}.
\end{equation}
Directly solving \eqref{eq:LR-Decomposed-complex} is difficult because of the nonconvex constraint. However, \cite{RiNNAL,RiNNALplus} show that it can be reduced to
\begin{equation}\label{eq:LR-Decomposed}
\min_{R}\left\{
    f_r(R)
\;\middle|\;
AR=be_1^\top
\right\},
\end{equation}
which is a simpler nonconvex problem with only linear constraints. We then apply a projected gradient method to solve \eqref{eq:LR-Decomposed}.

\subsection{Convex lifting phase}\label{subsec:convex-lifting}

The low-rank phase serves as a local acceleration step for reducing the objective value of (ALM-sub), but it does not by itself guarantee global convergence of Algorithm 1. To ensure global convergence, we switch to a convex lifting phase after a suitable number of low-rank iterations. In this phase, we solve (ALM-sub) directly in the variable $Y$  via the projected gradient step
\begin{equation}\label{eq:PG-Decomposed}
Y^+=\Pi_{\F}\left(Y-\eta \nabla L_\sigma(Y,\Lambda,W)\right),
\end{equation}
where $\eta_k>0$ is a suitable step size. As shown in \cite{lee2022escaping,RiNNALPOP}, this phase guarantees global convergence of the proposed method and helps identify the optimal rank, which can in turn be used to warm start the low-rank phase
\eqref{eq:LR-Decomposed-complex}. The main computational bottleneck in \eqref{eq:PG-Decomposed} is the projection onto $\F$. The next subsection shows that this projection can be computed efficiently through a one-dimensional convex optimization problem.

\subsection{Projection onto \texorpdfstring{$\F$}{F}}\label{subsec:projection-F}
 
The projection onto $\K$ was developed in Section~\ref{sec:cone-geometry}, together with the Moreau-based recovery of the $\K^*$ multiplier update. We next derive the remaining projection formula for the affine-semidefinite set $\F$, which is needed in the convex lifting phase.
Recall that the set $\F$ defined in \eqref{eq:F-set} can be equivalently expressed as
\begin{align*}
   \F 
   =  \{Y\in\S^{n+1}: Y_{11}=1\}\cap\S^{\A}_{+},
\end{align*}
where $\S^{\A}_{+}\coloneqq\{Y\in\S^{n+1}_+:\A(Y)=0\}$.
Let $P =[b^\top;-A^\top]$ and $J=I-P(P^\top P)^{-1}P^\top$.
By \cite[Lemma 2]{RiNNALplus}, we have
\begin{equation}
    \label{eq:proj-SAplus}
    \Pi_{\S^{\A}_{+}}(\bar Y)=\Pi_{\S^{n+1}_+}(J\bar YJ), \quad \forall\, \bar Y\in\S^{n+1}.
\end{equation}
Therefore, projecting onto $\F$ reduces to a one-dimensional convex optimization problem.
\begin{theorem}
    \label{thm:projection-F}
    The projection of any given matrix $\bar Y\in \S^{n+1}$ onto $\F$ is given by
    \[
\Pi_{\F}(\bar Y)=\Pi_{\S^{\A}_{+}}(\bar Y+\eta^* E_{11}),
    \]
where $\eta^*$ is the unique optimal solution of the following problem:
    \begin{equation}
        \label{eq:proj-F}
        \min_{\eta\in\R}\;\left\{\frac{1}{2}\|\Pi_{\S^{n+1}_+}(J(\bar Y+ \eta E_{11})J)\|^2-\eta\right\}.
    \end{equation}
\end{theorem}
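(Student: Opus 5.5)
Our plan is to treat the projection onto $\F=\{Y:Y_{11}=1\}\cap\S^{\A}_+$ as a convex program with a single affine constraint and dualize that constraint. Concretely, $\Pi_\F(\bar Y)$ is the unique minimizer of $\frac12\|Y-\bar Y\|^2$ over $Y\in\S^{\A}_+$ subject to $\langle E_{11},Y\rangle=1$. We introduce a multiplier $\eta\in\R$ for the equality and form the Lagrangian
\[
\frac12\|Y-\bar Y\|^2-\eta(\langle E_{11},Y\rangle-1)=\frac12\|Y-(\bar Y+\eta E_{11})\|^2-\eta\langle E_{11},\bar Y\rangle-\frac{\eta^2}{2}+\eta .
\]
For fixed $\eta$, minimizing over $Y\in\S^{\A}_+$ gives $Y(\eta)=\Pi_{\S^{\A}_+}(\bar Y+\eta E_{11})$. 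By \eqref{eq:proj-SAplus}, this equals $\Pi_{\S^{n+1}_+}(J(\bar Y+\eta E_{11})J)$.

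The next step is to compute the dual function $g(\eta)$ in closed form. We write $M=\bar Y+\eta E_{11}$ and use that $\S^{\A}_+$ is a closed convex cone. Moreau's identity then gives $\|M-\Pi(M)\|^2=\|M\|^2-\|\Pi(M)\|^2$, since $\langle \Pi(M),M-\Pi(M)\rangle=0$. Substituting, $g(\eta)=-\frac12\|\Pi_{\S^{\A}_+}(M)\|^2+\frac12\|M\|^2-\eta\bar Y_{11}-\frac{\eta^2}{2}+\eta$. Expanding $\|M\|^2=\|\bar Y\|^2+2\eta\bar Y_{11}+\eta^2$, the linear and quadratic terms cancel, leaving
\[
g(\eta)=\tfrac12\|\bar Y\|^2-\Bigl(\tfrac12\|\Pi_{\S^{n+1}_+}(J(\bar Y+\eta E_{11})J)\|^2-\eta\Bigr).
\]
Maximizing $g$ is therefore exactly problem \eqref{eq:proj-F}. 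The objective there is convex, since $M\mapsto\frac12\|\Pi_C(M)\|^2$ is convex and differentiable for a closed convex cone $C$, with gradient $\Pi_C(M)$. Its derivative in $\eta$ is $\langle \Pi_{\S^{\A}_+}(\bar Y+\eta E_{11}),E_{11}\rangle-1$, so the first-order condition at $\eta^*$ is precisely primal feasibility $Y(\eta^*)_{11}=1$.

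It then remains to justify that a maximizer $\eta^*$ exists, that strong duality (or simply the KKT conditions) holds, and that $\eta^*$ is unique. Once some $\eta^*$ with $Y(\eta^*)_{11}=1$ is available, optimality of $Y(\eta^*)$ follows directly. $Y(\eta^*)$ minimizes the Lagrangian over $\S^{\A}_+$ and is feasible, so for any feasible $Y$ the Lagrangian inequality yields $\frac12\|Y-\bar Y\|^2\ge\frac12\|Y(\eta^*)-\bar Y\|^2$. No constraint qualification is needed here, which matters because $\S^{\A}_+$ has empty interior in $\S^{n+1}$.

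The main obstacle is existence and uniqueness of the root of $\phi(\eta):=Y(\eta)_{11}-1$. The map $\eta\mapsto Y(\eta)_{11}$ is continuous and nondecreasing, since it is the derivative of a convex function. For existence, we would use feasibility of $\F$, which follows from the standing assumption that \eqref{eq:QP} is feasible: there is $\hat Y=[1;\hat x][1;\hat x]^\top\in\F$, so $E_{11}$ has a nonzero component along $\S^{\A}_+$. We would show $Y(\eta)_{11}\to0$ as $\eta\to-\infty$, because $\Pi(M)_{11}\le$ something bounded by $\|\bar Y\|$ plus a decaying term. We would also show $Y(\eta)_{11}\to+\infty$ as $\eta\to+\infty$, since $\langle\Pi(\bar Y+\eta E_{11}),\hat Y\rangle\ge\langle \bar Y+\eta E_{11},\hat Y\rangle$ grows linearly in $\eta$ (using $\hat Y$ in the cone) while $Y(\eta)$ stays comparable to its $(1,1)$ entry. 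For uniqueness, we would argue strict monotonicity of $\phi$ near the root. If that proves delicate, we would fall back on uniqueness of $\Pi_\F(\bar Y)$ and interpret "unique" as uniqueness of the optimal $\eta^*$ via strict convexity on the relevant range.
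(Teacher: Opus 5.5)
Your route is the paper's: its proof is only the remark that the formula follows from the dual characterization of $\Pi_{\F}$ together with \eqref{eq:proj-SAplus}. Your dual computation, the identification of the dual with \eqref{eq:proj-F}, the derivative $Y(\eta)_{11}-1$, and the constraint-qualification-free sufficiency argument are all correct and make that remark precise. The gap is in the part you flag as the main obstacle.

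For existence, the $\eta\to+\infty$ step rests on ``$Y(\eta)$ stays comparable to its $(1,1)$ entry.'' That is not justified: for a general positive semidefinite $Y$, $\hat y^\top Y\hat y$ is not controlled by $Y_{11}$ (here $\hat y:=[1;\hat x]$, so $\hat Y=\hat y\hat y^\top$). The $\eta\to-\infty$ limit is only gestured at. Neither step is needed. Let $\theta$ be the objective of \eqref{eq:proj-F}. Then $\theta(\eta)\ge-\eta$. Your inequality $\langle Y(\eta),\hat Y\rangle\ge\langle\bar Y,\hat Y\rangle+\eta$ plus Cauchy--Schwarz gives $\theta(\eta)\ge(\langle\bar Y,\hat Y\rangle+\eta)^2/(2\|\hat Y\|^2)-\eta$ for large $\eta$. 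Hence the continuous convex function $\theta$ is coercive, has a minimizer $\eta^*$, and $Y(\eta^*)_{11}=1$ there. Alternatively, $\Pi_{\S^{n+1}_+}(JMJ)\succeq JMJ$ gives $Y(\eta)_{11}\ge (Je_1)^\top\bar Y Je_1+\eta\|Je_1\|^4$ directly, and $Je_1\neq 0$ because $\langle Je_1,\hat y\rangle=\langle e_1,J\hat y\rangle=\hat y_1=1$.

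The more substantive omission is uniqueness of $\eta^*$, which is part of the statement. You leave it as a plan, and the fallback does not work as stated. First, $\theta$ need not be strictly convex: if $J\bar YJ\preceq 0$, then $\theta(\eta)=-\eta$ on all of $(-\infty,0]$. Second, reinterpreting ``unique'' changes the claim. The missing step is a short complementarity argument:
\begin{itemize}
\item Suppose $\eta_1\neq\eta_2$ both minimize $\theta$. Then $Y(\eta_i)_{11}=1$, so by your sufficiency argument both $Y(\eta_1)$ and $Y(\eta_2)$ equal the unique projection $Y^*=\Pi_{\F}(\bar Y)$.
\item Projection onto the closed convex cone $\S^{\A}_+$ gives $\langle \bar Y+\eta_iE_{11}-Y^*,\,Y^*\rangle=0$ for $i=1,2$.
\item Subtracting yields $(\eta_1-\eta_2)Y^*_{11}=0$, which contradicts $Y^*_{11}=1$.
\end{itemize}
With these two repairs your proposal is a complete proof, and more detailed than the one the paper gives.
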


\begin{proof}\;
    The formula follows from the dual characterization of the projection onto $\F$ together with \eqref{eq:proj-SAplus}.
\end{proof}

The problem \eqref{eq:proj-F} is one-dimensional and convex, so it can be solved efficiently by standard methods such as the semismooth Newton method; this dual viewpoint is consistent with the affine-constrained proximal framework in \cite{hou2026efficient}. The computation can be further accelerated by preprocessing and warm-start strategies from \cite{RiNNALplus}, using the low-rank solution as an initial point.

\section{Numerical experiments}\label{sec:numerical}
In this section, we report numerical experiments to evaluate the efficiency and scalability of our algorithm RiNNAL-S for solving the SC-SDP relaxation. We also evaluate
the bound quality of the SC-SDP relaxation, its computational and presolving performance, and the broader applicability of the framework.
All experiments are performed using {\sc Matlab} R2023b
on a workstation equipped with Intel Xeon E5-2680 (v3) processors
and 96GB of RAM.

\medskip
\noindent\textbf{Baseline Solvers}. For the SC-SDP relaxation, we compare RiNNAL-S with the commercial solver MOSEK 11.0 \cite{mosek}. For the SDP--RLT relaxation, we report results obtained by RiNNAL+ \cite{RiNNALplus} and SDPNAL+ \cite{SDPNALp1,SDPNALp2}. For the original problem \eqref{eq:MIQP}, we report results from the commercial solver Gurobi 13.0.0 \cite{gurobi}, which solves the problem directly by branch-and-bound methods. These benchmarks are used to assess both the practical performance of SC-SDP and the computational advantage of RiNNAL-S.

\medskip
\noindent\textbf{Stopping Conditions}. Based on the KKT conditions \eqref{eq:KKT-Decomposed-SDP} for the SC-SDP relaxation \eqref{eq:Decomposed-SDP-2}, we define the following relative KKT residuals to assess the accuracy of the solution:
\begin{equation*}
\operatorname{R_p}:=
\max\left\{
\frac{{ \|\Pi_{-}(\B(Y))\|}}{1+\|d\|^2},\;
\frac{\|Y-Z\|}{1+\|Y\|+\|Z\|}
\right\},
\;\;
\operatorname{R_d} :=  
\frac{\|\Pi_{\S_-^{n+1}}(S)\|}{1+\|S\|},
\;\;
\operatorname{R_{c}} := 
\frac{|\langle Y, S\rangle|}{1+\|Y\|+\|S\|}.
\end{equation*}
\noindent
We omit residuals associated with the feasibility constraints
$Y\in \F$, $Z\in \K$, the dual feasibility constraints $\Lambda\geq 0$, $W\in \K^*$, and the complementarity constraints,
as the first is automatically enforced by the projection step in the ALM subproblem, and the others are inherently satisfied by the construction of the ALM iteration.
For a given tolerance $\tol> 0$, our algorithm terminates when the maximum residual satisfies $\operatorname{R_{max}}:=\max\{\operatorname{R_p},\operatorname{R_d},\operatorname{R_c}\}<\tol$ or the maximum time limit $\timelimit$ is reached. In our experiments, unless otherwise specified, we set $\tol = 10^{-6}$ and $\timelimit = 3600\mathrm{(secs)}$ for all solvers.

\medskip
\noindent\textbf{Implementation}. 
In RiNNAL-S, we employ a projected gradient (PG) method with Barzilai-Borwein steps and non-monotone line search to solve the augmented Lagrangian subproblems. Although classical ALM theory often increases the penalty parameter monotonically to obtain fast local convergence, this rule can be overly conservative and may lead to difficult subproblems. We therefore use an adaptive update in the implementation: the penalty parameter $\sigma_k$ is initialized at $\sigma_0 = 1$, increased by a factor of 1.5 if $\operatorname{R_p}/\operatorname{R_d}\geq 2$ and decreased by a factor of 1.5 if $\operatorname{R_p}/\operatorname{R_d}\leq 1/5$. The initial rank $r_0$ is set to be $\min\{200,\lceil n/5 \rceil \}$. The stepsize $t_k$ of the PG step is determined as $1/\sigma_k$. The initial point $R_0$ is randomly selected from the feasible region. In each ALM iteration, the low-rank phase is performed with the maximum number of PG iterations capped at 50. Additionally, a PG step is executed after the low-rank phase at every 5 outer ALM iterations.

\medskip
\noindent\textbf{Table Notations.} We use `-' to indicate that an algorithm did not achieve the required accuracy $\tol$ within the maximum time limit $\timelimit$. For MOSEK, the iteration count is not reported. 
A superscript ``$\dagger$'' is appended to the KKT residual value to indicate that the algorithm attained moderate accuracy, but not the required accuracy.
For relaxation comparisons, the relative gap is defined as $\mathrm{relgap}=(UB-LB)/\max\{1,|UB|\}$, and is multiplied by $100$ when reported in percentage. The Gurobi relative gap refers instead to the solver-reported MIP gap.
For the column labeled ``Iteration'' associated with SDPNAL+, the first entry denotes the number of outer iterations, the second entry denotes the total number of semismooth Newton inner iterations, and the third indicates the total number of ADMM+ iterations. Similarly, for the column labeled ``Iteration'' associated with RiNNAL-S and RiNNAL+, the first entry corresponds to the number of ALM iterations, the second denotes the total number of projected gradient descent 
iterations, and the third 
reports the total number of PG steps.
The column labeled ``Rank'' indicates the number of columns of the final iterate $R$ for RiNNAL-S and RiNNAL+ and the final rank of the output matrix $Y$ for SDPNAL+. 
The ``Objective'' column denotes the value of the objective function, while the total computation time is listed under ``Time''.

\subsection{Sparse ridge regression}

In this subsection, we investigate problem \eqref{eq:srr-scaling-model} as a representative problem class for evaluating the practical performance of SC-SDP. It is given by
\begingroup
\renewcommand{\theequation}{SRR}
\begin{equation}\label{eq:srr-scaling-model}
\min_{x\in\R^n}\left\{\frac{1}{m}\|Ax-b\|_2^2+\gamma\|x\|_2^2 \;\middle|\; \|x\|_0\le k\right\},
\tag{SRR}
\end{equation}
\endgroup
where $A\in\R^{m\times n}$, $b\in\R^m$, and $\gamma>0$ are given. We focus on two aspects: the scalability of RiNNAL-S with respect to the problem dimension, and its presolving performance.
For the synthetic instances used in this subsection, we follow the data generation schemes in \cite{tan2025screening,bertsimas2020sparse}. In particular, each row of $A$ is sampled independently from $\mathcal{N}(0,\Sigma)$ with Toeplitz covariance matrix $\Sigma_{ij}=\rho^{|i-j|}$. The ground-truth vector $x^*\in\{-1,0,1\}^n$ has exactly $k$ nonzero entries, with uniformly random support and independent random signs, and the response vector is generated by $b=Ax^*+\varepsilon$, where $\varepsilon\sim\mathcal{N}(0,\sigma^2 I)$ and $\sigma^2=\|Ax^*\|_2^2/(m\,\mathrm{SNR}^2)$. Unless otherwise specified, we use $m=2n$, $k=5$, $\rho=0.1$, $\gamma=1$ and $\mathrm{SNR}=1$.

\subsubsection{Scalability comparison}
\label{subsec:scalability}

We assess the scalability of RiNNAL-S with respect to the problem dimension. We compare RiNNAL-S and MOSEK on the SC-SDP relaxation \eqref{eq:intro-Decomposed-SDP}, together with Gurobi on the original problem \eqref{eq:srr-scaling-model}. For these instances, the SC-SDP relaxation is exact, so solving it with RiNNAL-S also solves \eqref{eq:srr-scaling-model}.

\begin{figure}[ht!]
    \centering
    \includegraphics[width=0.5\textwidth]{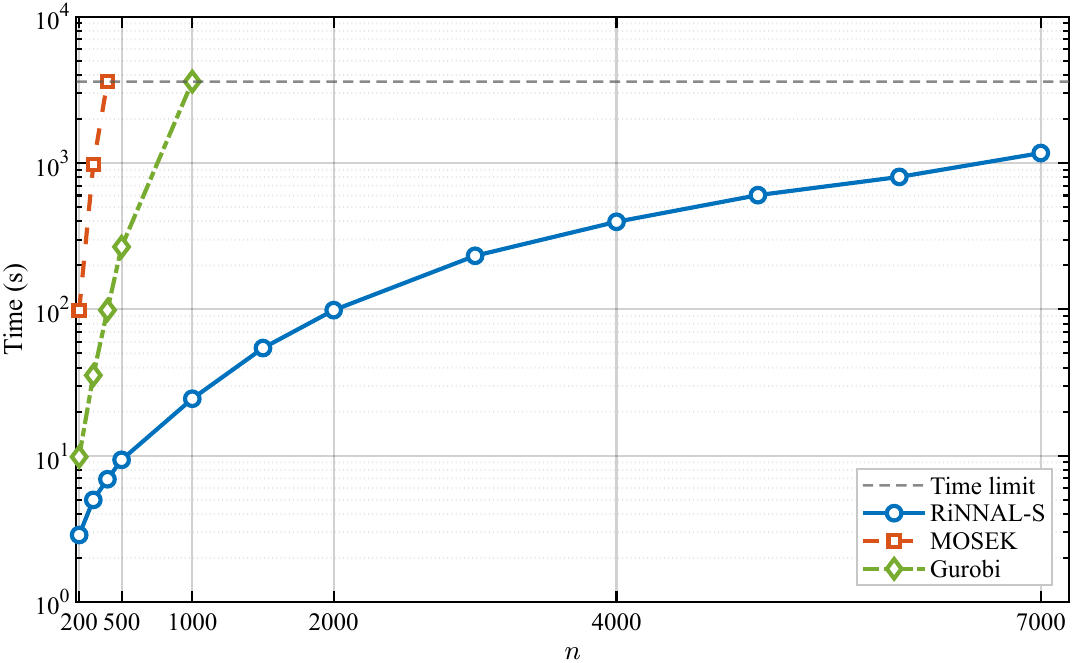}
    \caption{Scalability for \eqref{eq:srr-scaling-model}.}
    \label{fig:srr-scaling}
\end{figure}

Figure~\ref{fig:srr-scaling} shows that RiNNAL-S requires less time than both MOSEK and Gurobi on these instances. At $n=1000$, RiNNAL-S takes about $25$ seconds, whereas MOSEK requires at least one hour and still does not solve the instance; Gurobi also reaches the time limit at this dimension. The trend of the curves indicates that this computational gap continues to widen as the dimension increases, and RiNNAL-S remains effective up to $n=7000$. This experiment demonstrates the practical scalability of RiNNAL-S for solving the SC-SDP relaxation, and highlights the computational advantage of the proposed method over state-of-the-art commercial solvers on large instances.

\subsubsection{Presolving comparison}\label{subsubsec:srr-presolving-comparison}

We evaluate the presolving effect of SC-SDP through variable fixing, screening cuts, and their effect on solving the original problem with Gurobi. We also study the additional benefit of the dual-refinement step from Subsection~\ref{subsec:dual-refinement}, using the perspective relaxation \eqref{eq:srr-persp} as a baseline. The dual-refinement (DR) certificate derived from the optimal dual solution of SC-SDP is computed by MOSEK from the reduced formulation \eqref{eq:dual-refinement-uqp-reduced}. We use the same synthetic setting as above, but set $n=500$ and $\gamma=0.1$ to obtain a regime in which the presolving behavior of the perspective relaxation and SC-SDP can be clearly separated.
Note that under the parameters in Subsection \ref{subsec:scalability}
with $\gamma = 1$, both relaxations are already tight enough to fix all variables. 
All plots are based on five random instances and $\mathrm{SNR}\in\{0.40,0.42,\ldots,0.50\}$, with markers showing the mean and shaded bands showing the range from minimum to maximum. We first compare relaxation gaps and variable-fixing strength, then evaluate screening cuts, and finally test their practical effect in Gurobi.

\begin{figure}[ht!]
    \centering
    \begin{subfigure}[t]{0.32\textwidth}
        \centering
        \includegraphics[width=\textwidth]{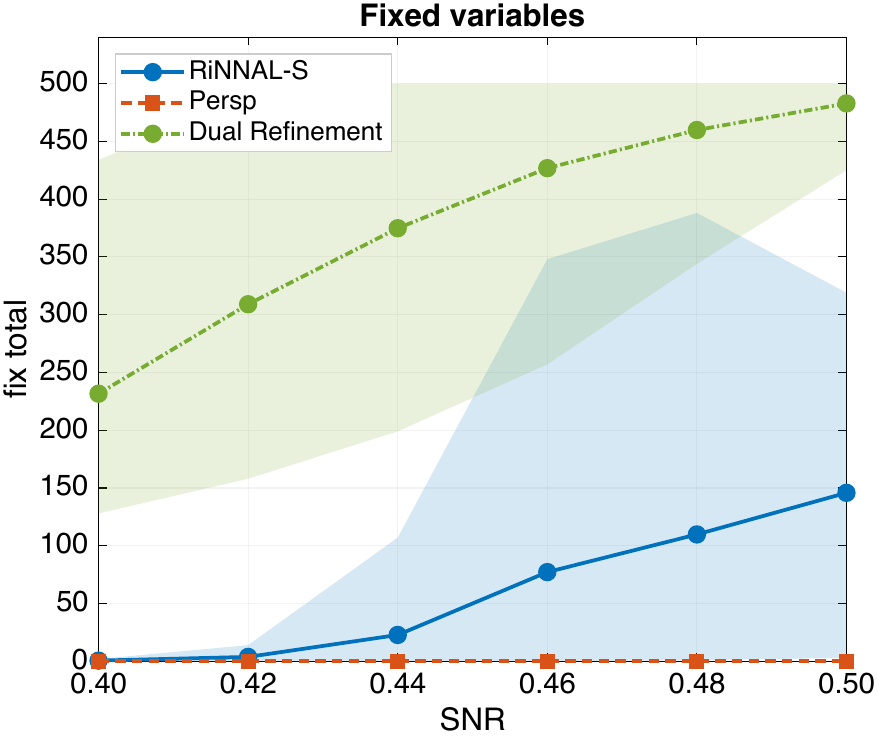}
        \caption{Fixed variables.}
        \label{fig:srr-fixing-snr-fix}
    \end{subfigure}
    \hfill
    \begin{subfigure}[t]{0.32\textwidth}
        \centering
        \includegraphics[width=\textwidth]{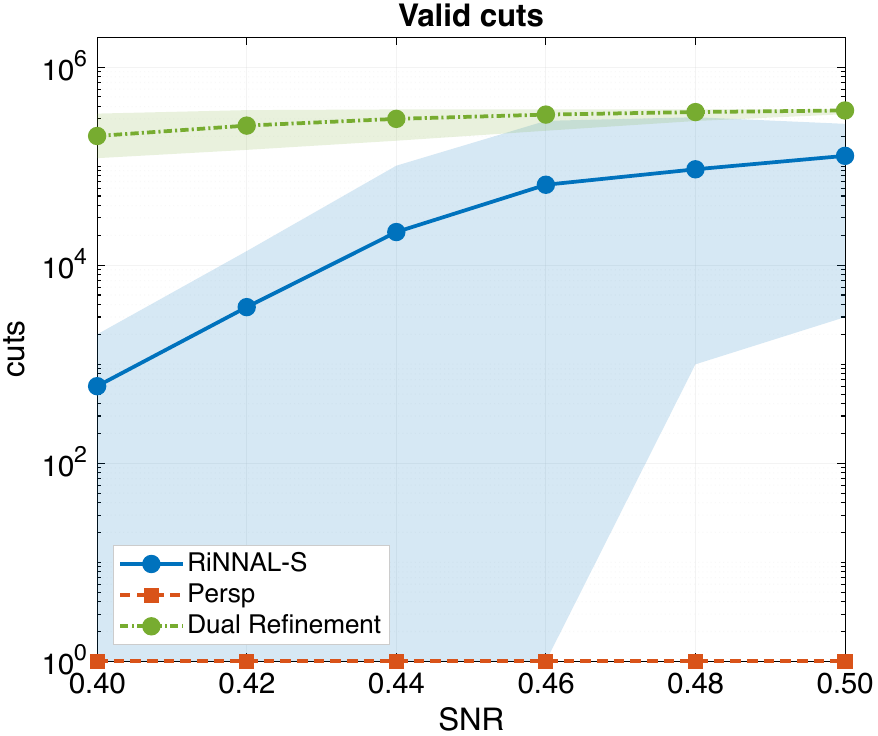}
        \caption{Valid cuts.}
        \label{fig:srr-fixing-snr-valid-cuts}
    \end{subfigure}
    \hfill
    \begin{subfigure}[t]{0.32\textwidth}
        \centering
        \includegraphics[width=\textwidth]{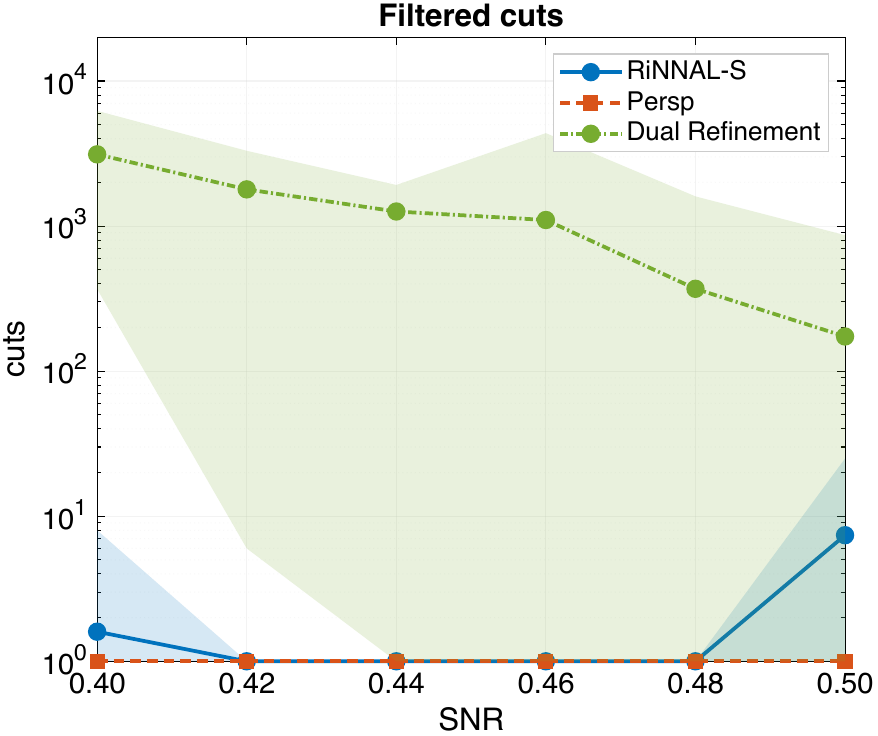}
        \caption{Filtered cuts.}
        \label{fig:srr-fixing-snr-filtered-cuts}
    \end{subfigure}\\[0.8em]
    \begin{subfigure}[t]{0.32\textwidth}
        \centering
        \includegraphics[width=\textwidth]{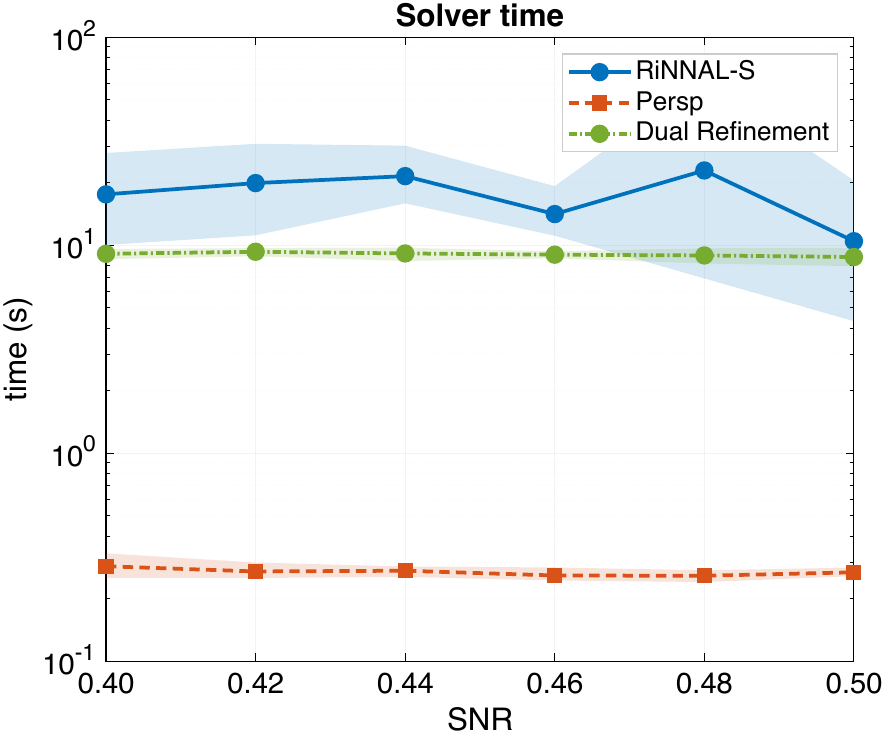}
        \caption{Solver time.}
        \label{fig:srr-fixing-snr-time}
    \end{subfigure}
    \hfill
    \begin{subfigure}[t]{0.32\textwidth}
        \centering
        \includegraphics[width=\textwidth]{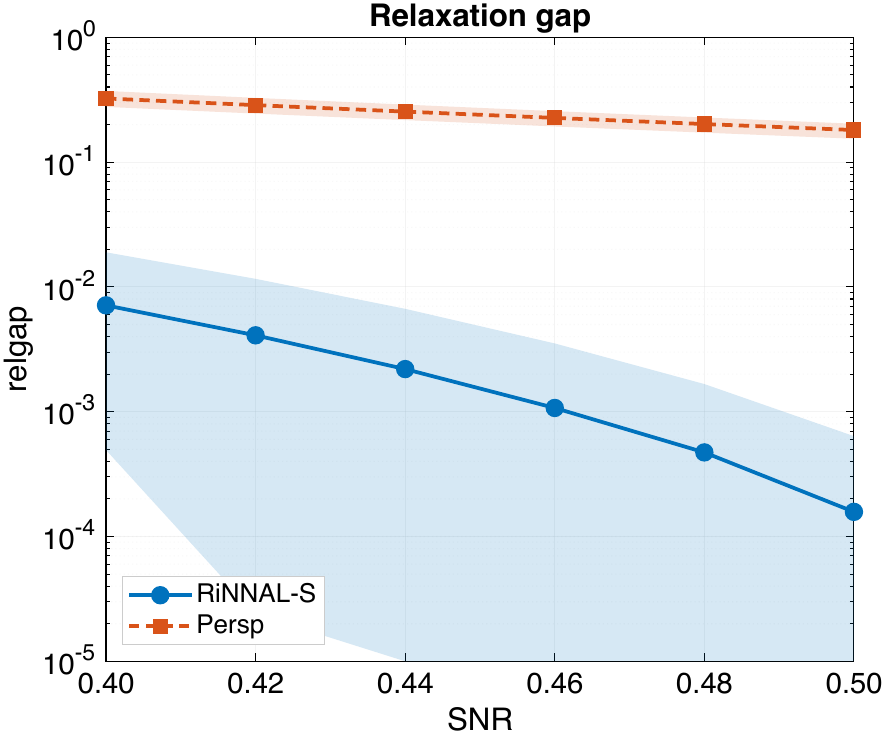}
        \caption{Relaxation gap.}
        \label{fig:srr-fixing-snr-relgap}
    \end{subfigure}
    \hfill
    \begin{subfigure}[t]{0.32\textwidth}
        \centering
         \includegraphics[width=\textwidth]{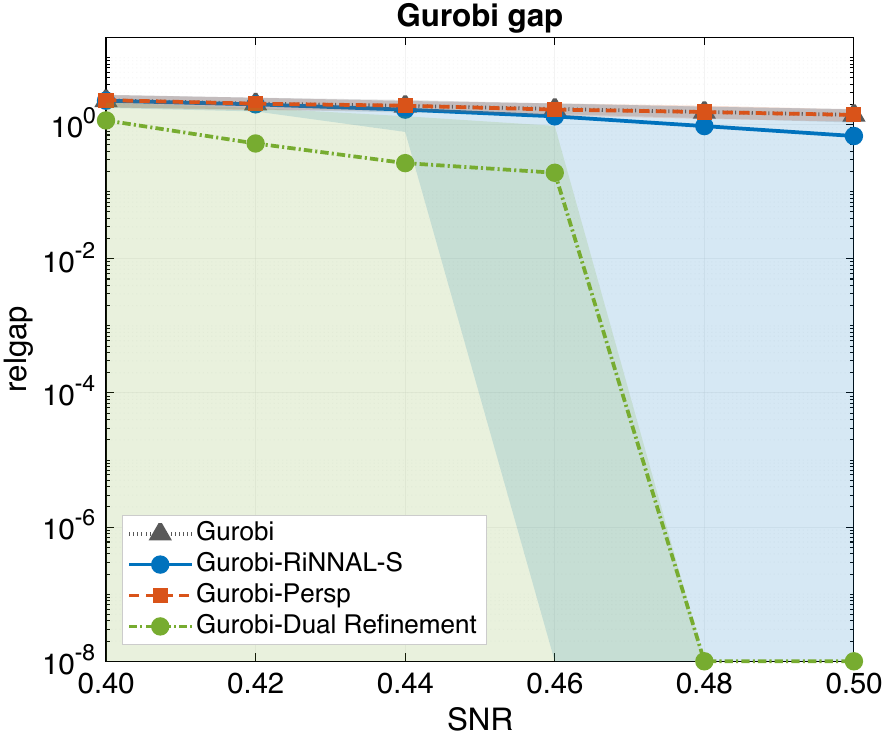}
        \caption{Gurobi gap (600s).}
        \label{fig:srr-fixing-snr-gurobi-gap}
    \end{subfigure}
    \caption{Presolving and Gurobi performance for \eqref{eq:srr-scaling-model} under varying SNR.}
    \label{fig:srr-fixing-snr}
\end{figure}

\medskip
\noindent\textbf{Relaxation gap.}
For each relaxation, the relative gap is computed using an upper bound obtained from its own support estimate: we first round the relaxation solution of either \eqref{eq:intro-Decomposed-SDP} 
or \eqref{eq:Decomposed-SDP-z}
to obtain a support set, then solve the restricted sparse ridge subproblem on that support. Since the support is fixed, this subproblem is a strongly convex unconstrained quadratic problem, and the upper bound is therefore obtained by solving the corresponding linear system.
Although the perspective relaxation is faster, Figure~\ref{fig:srr-fixing-snr}(\subref{fig:srr-fixing-snr-relgap}) shows that RiNNAL-S produces much smaller relaxation gaps throughout the tested SNR range. Thus, SC-SDP provides a much stronger lower bound even when the perspective relaxation is still too weak to identify the support.

\medskip
\noindent\textbf{Fixing.}
We apply the dual-fixing rule from Theorem~\ref{thm:dual-fixing} to the SC-SDP certificate and its dual-refined version from Subsection~\ref{subsec:dual-refinement}, and compare the resulting fixing performance with that of the perspective relaxation proposed in \cite{atamturk2020safe}.
Figure~\ref{fig:srr-fixing-snr}(\subref{fig:srr-fixing-snr-fix}) shows that SC-SDP fixes many more variables than the perspective relaxation, and the advantage becomes more visible as the 
relaxation \eqref{eq:intro-Decomposed-SDP} becomes tighter (i.e., smaller relaxation gap)
with increasing SNR. Dual refinement strengthens this effect further while keeping the SC-SDP bound unchanged, and the extra cost 
incurred by MOSEK in solving 
\eqref{eq:dual-refinement-uqp-reduced} is modest.

\medskip
\noindent\textbf{Screening.}
Here we apply the screening rule from Theorem~\ref{thm:screening-cuts} and consider pairwise screening cuts, corresponding to SCG tuples with $|S|+|N|=2$. We call a cut filtered if it remains after removing those already implied by variable fixing. Figures~\ref{fig:srr-fixing-snr}(\subref{fig:srr-fixing-snr-valid-cuts}) and \ref{fig:srr-fixing-snr}(
\subref{fig:srr-fixing-snr-filtered-cuts}) show that screening provides additional presolving information beyond single-variable fixing. RiNNAL-S generates many valid screening cuts, whereas the perspective relaxation yields very few, and dual refinement further increases this number substantially. The filtered-cut plot shows that a nontrivial portion remains after removing cuts already implied by variable fixing, so the refined dual certificate captures support-pattern information beyond fixing alone. For these two log-scale plots, zero cuts are displayed as one.

\medskip
\noindent\textbf{Presolving in Gurobi.}
We add the fixed variables and up to 100 filtered screening cuts to Gurobi and impose a 600-second time limit. Figure~\ref{fig:srr-fixing-snr}(\subref{fig:srr-fixing-snr-gurobi-gap}) shows that the 600-second MIP relative gap improves over solving the original formulation directly. 
We can see that the quality of variable fixing and filtered screening cuts provided by SC-SDP with dual refinement is substantially better than that provided by perspective relaxation \eqref{eq:srr-persp}, which provides no useful information and hence does not
improve Gurobi's MIP gap over the baseline.
This is consistent with the role of fixing and screening in presolving: they shrink the feasible region before branching and thereby reduce the branch-and-bound burden.

\begin{remark}
    The Gurobi relative gap may be much larger than the gaps of the perspective and SC-SDP relaxations, because the MIP gap is computed after adding fixing and screening information rather than using the relaxation lower bound itself. A more specialized branch-and-bound method that incorporates the SC-SDP bound more directly could further improve practical performance, and we leave this direction for future work.
\end{remark}

\subsubsection{Extension to constrained sparse ridge regression}

We next consider a constrained variant of sparse ridge regression in order to illustrate that the presolving framework also applies when additional linear equalities are present. The problem takes the form
\begingroup
\renewcommand{\theequation}{SRR-E}
\begin{equation}\label{eq:srr-equality-model}
\min_{x\in\R^n}\left\{\frac{1}{m}\|Ax-b\|_2^2+\gamma\|x\|_2^2
\;\middle|\;
e^\top x=\beta,\; \|x\|_0\le k
\right\},
\tag{SRR-E}
\end{equation}
\endgroup
where the scalar $\beta=e^\top x^*$ matches the ground-truth vector. 
The data generation and presolving pipeline are the same as those used in the
presolving comparison for \eqref{eq:srr-scaling-model}. The main difference
from the unconstrained setting is that the perspective relaxation used in the
previous comparison is no longer applicable in this form once the additional
coupling constraint \(e^\top x=\beta\) is imposed, so it is not included as a
baseline here. Accordingly, in this subsection we focus only on SC-SDP and its
dual-refined certificate.

\begin{figure}[H]
    \centering
    \begin{subfigure}[t]{0.32\textwidth}
        \centering
        \includegraphics[width=\textwidth]{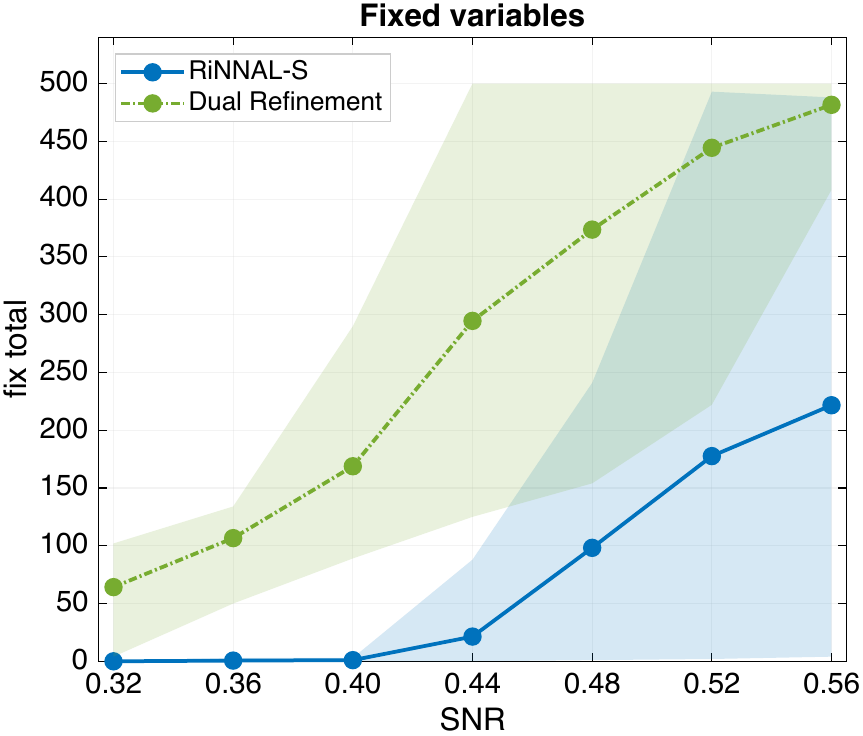}
        \caption{Fixed variables.}
        \label{fig:srr-equality-fixing-snr-fix}
    \end{subfigure}
    \hfill
    \begin{subfigure}[t]{0.32\textwidth}
        \centering
        \includegraphics[width=\textwidth]{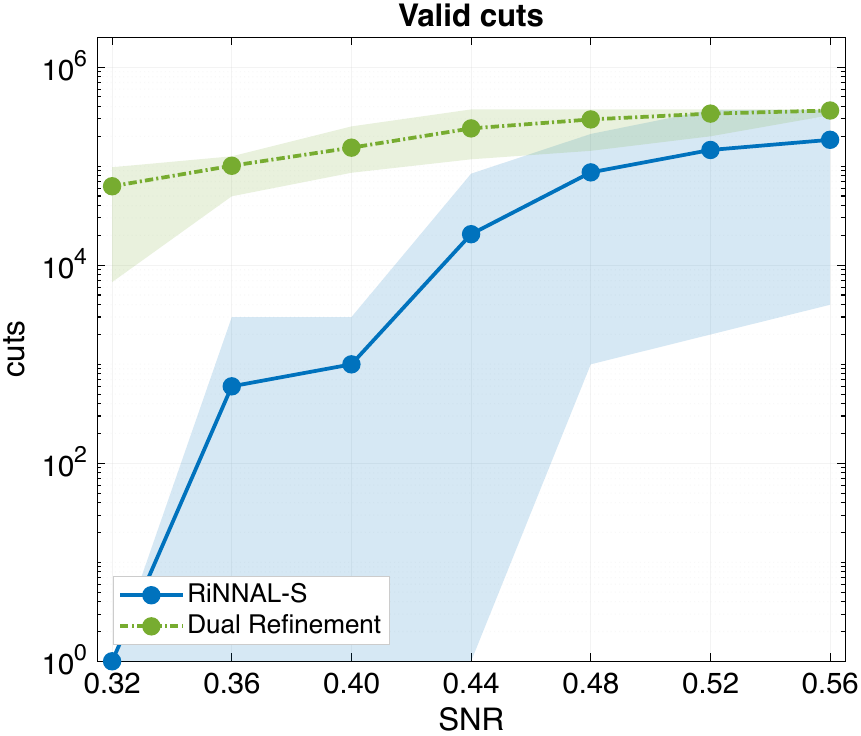}
        \caption{Valid cuts.}
        \label{fig:srr-equality-fixing-snr-valid-cuts}
    \end{subfigure}
    \hfill
    \begin{subfigure}[t]{0.32\textwidth}
        \centering
        \includegraphics[width=\textwidth]{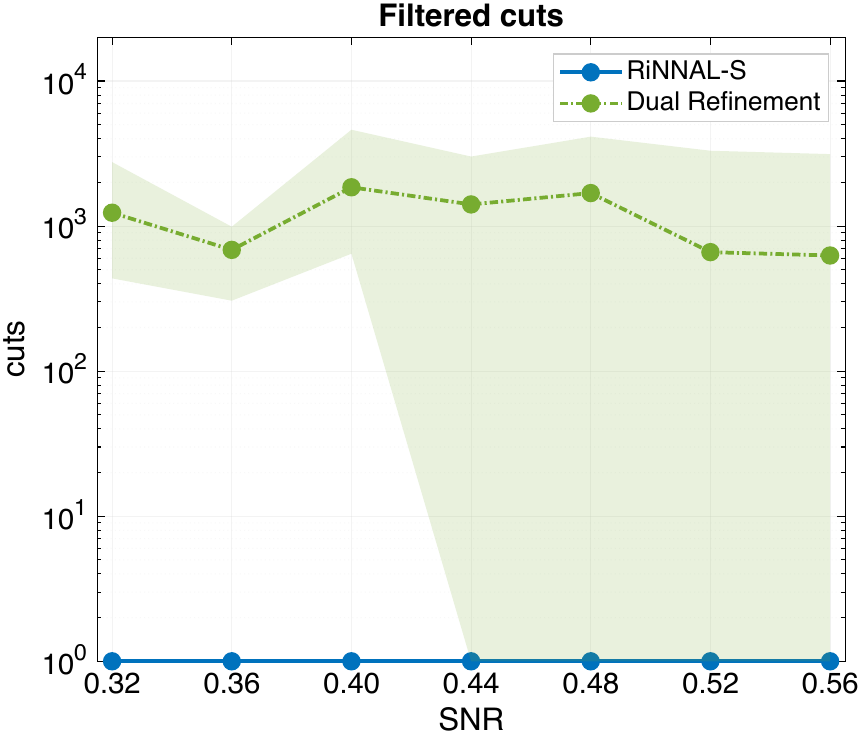}
        \caption{Filtered cuts.}
        \label{fig:srr-equality-fixing-snr-filtered-cuts}
    \end{subfigure}\\[0.8em]
    \begin{subfigure}[t]{0.32\textwidth}
        \centering
        \includegraphics[width=\textwidth]{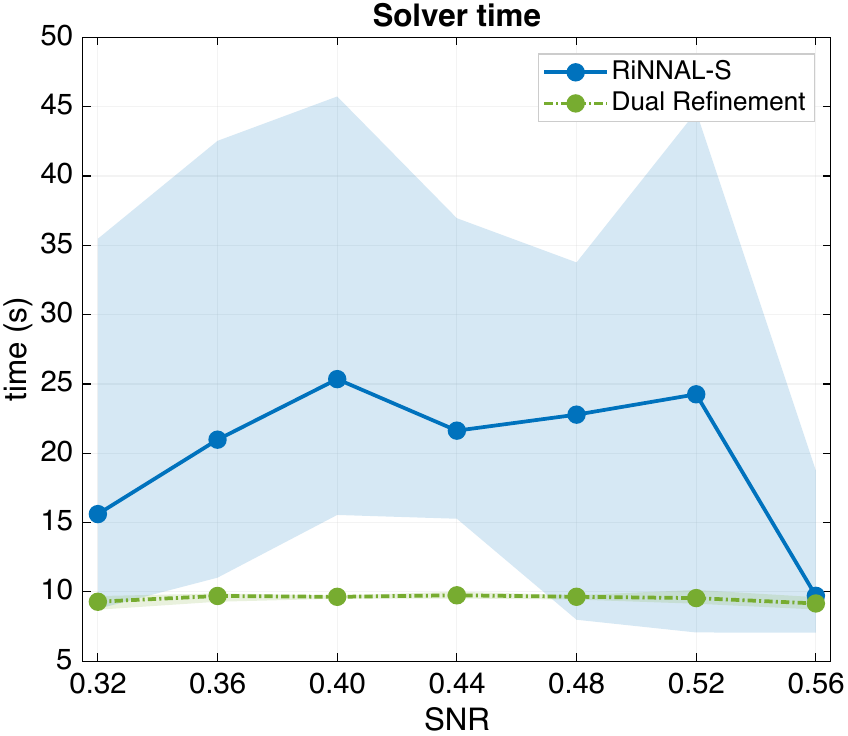}
        \caption{Solver time.}
        \label{fig:srr-equality-fixing-snr-time}
    \end{subfigure}
    \hfill
    \begin{subfigure}[t]{0.32\textwidth}
        \centering
        \includegraphics[width=\textwidth]{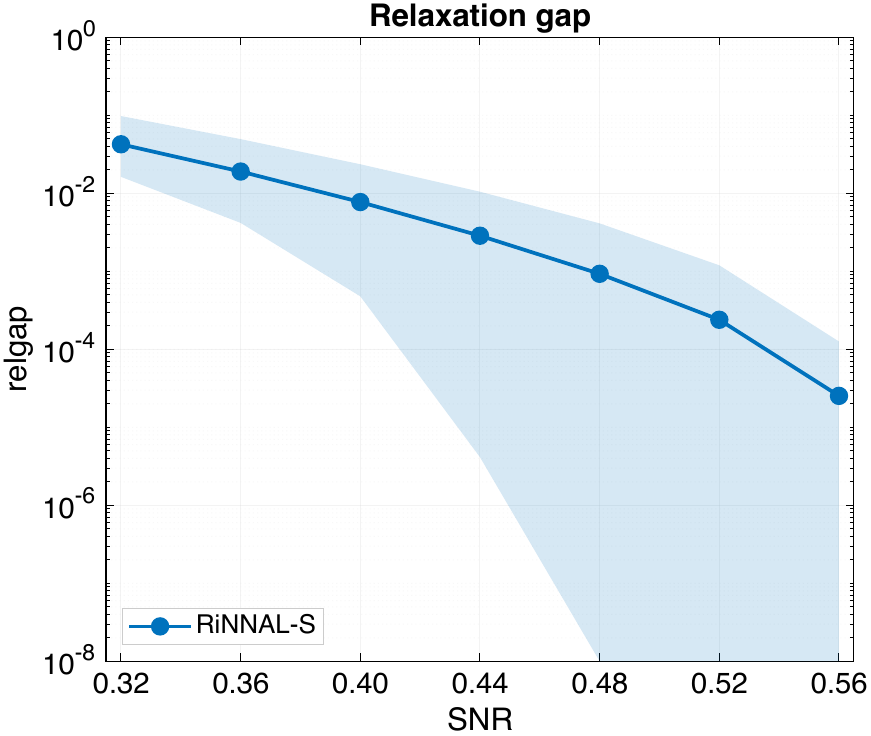}
        \caption{Relaxation gap.}
        \label{fig:srr-equality-fixing-snr-relgap}
    \end{subfigure}
    \hfill
    \begin{subfigure}[t]{0.32\textwidth}
        \centering
        \includegraphics[width=\textwidth]{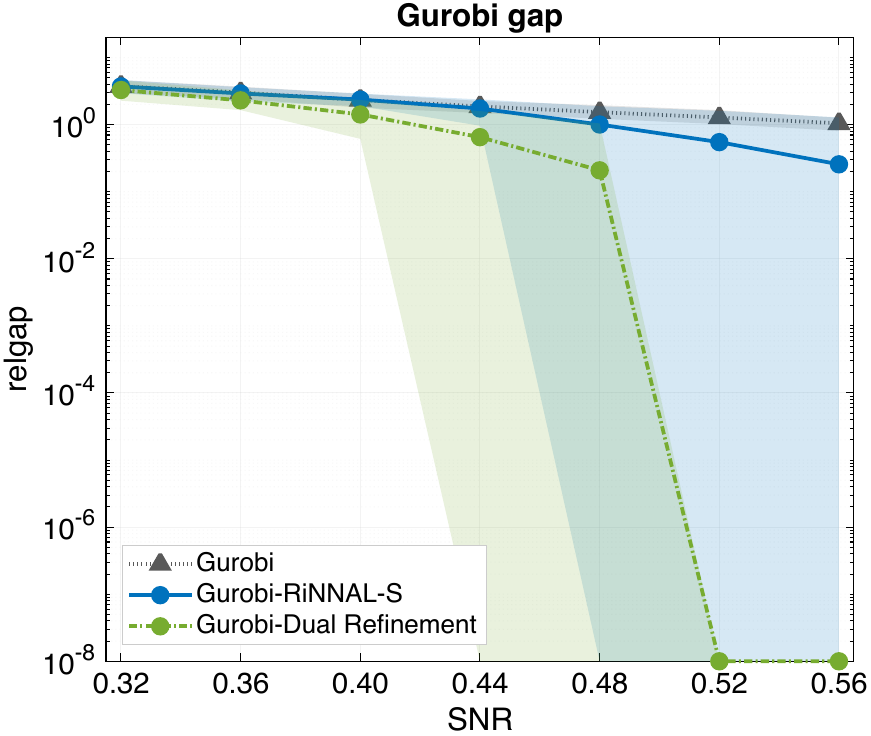}
        \caption{Gurobi gap (600s).}
        \label{fig:srr-equality-fixing-snr-gurobi-gap}
    \end{subfigure}
    \caption{Presolving and Gurobi performance for the equality-constrained sparse ridge regression model \eqref{eq:srr-equality-model} under varying SNR.}
    \label{fig:srr-equality-fixing-snr}
\end{figure}

Figure~\ref{fig:srr-equality-fixing-snr} confirms that SC-SDP presolving extends
to equality-constrained sparse ridge regression. Even when the relaxation gap is
not nearly zero, the dual-refined certificate still fixes many variables and
screens many support patterns, often leaving a much smaller original problem.
The smaller number of filtered cuts primarily reflects extensive fixing, which
leaves few remaining nontrivial cuts. The results by Gurobi 
in Figure~\ref{fig:srr-equality-fixing-snr}(f) further show that this
presolving information improves the downstream solver. These results highlight
SC-SDP as an effective presolver for more general constrained models.

\subsection{Sparse standard quadratic programs}\label{subsec:numexp-stqp-compare}

In this subsection, we study problem \eqref{eq:stqp-benchmark-model} as a benchmark class for evaluating the tightness and computational performance of SC-SDP. It takes the form
\begingroup
\renewcommand{\theequation}{S-StQP}
\begin{equation}\label{eq:stqp-benchmark-model}
\min_{x\in\R^n}
\left\{
x^\top Q x
\;\middle|\;
e^\top x = 1,\;
x\ge 0,\;
\|x\|_0\le k
\right\}.\tag{S-StQP}
\end{equation}
\endgroup
The matrix $Q$ is generated following the construction in \cite{bomze2025tighter,RiNNALplus}. More precisely, three classes of instances are considered, denoted by COP, PSD, and SPN. In terms of the observed solver difficulty for the resulting relaxations, COP instances are relatively easy, while PSD and SPN instances are harder. For each class, the data are generated so that the standard quadratic program without the sparsity constraint admits a unique global minimizer whose support size equals a target sparsity level of $k_0=n/4$. To evaluate the relaxations under a stricter sparsity requirement, the actual cardinality budget is set to $k=k_0/2$. {We examine three aspects: the bound quality of SC-SDP, its presolving performance, and the efficiency of RiNNAL-S on solving the resulting SC-SDP models.}

\subsubsection{Tightness comparison}\label{subsubsec:stqp-tightness-comparison}

We assess the bound quality delivered by SC-SDP against that of the standard SDP relaxation. For this comparison, the standard SDP relaxation \eqref{eq:Shor} and the SC-SDP relaxation \eqref{eq:intro-Decomposed-SDP} are both solved by MOSEK. The upper bound is obtained by selecting the relaxation solution with the best lower bound among the two methods, rounding it to a support set of cardinality $k$, and then solving the resulting fixed-support quadratic problem by Gurobi. Table~\ref{tab:stqp-relgap-values} reports the relative gap in percentage.

\small
\begin{longtable}{llrrrr}
\caption{Relative gaps (\%) for relaxations of \eqref{eq:stqp-benchmark-model}.}
\label{tab:stqp-relgap-values}\\
\toprule
Family & Method & $n=25$ & $n=50$ & $n=75$ & $n=100$ \\
\midrule
\endfirsthead
\caption[]{continued from previous page}\\
\toprule
Family & Method & $n=25$ & $n=50$ & $n=75$ & $n=100$ \\
\midrule
\endhead
\midrule
\multicolumn{6}{r}{Continued on next page}\\
\endfoot
\bottomrule
\endlastfoot
\multirow{2}{*}{\begin{tabular}[c]{@{}c@{}}COP (easy)\end{tabular}} & SDP & $\infty$ & $\infty$ & $\infty$ & $\infty$ \\*
 & SC-SDP & 10.56 & 10.56 & 10.56 & 10.56 \\
\midrule
\multirow{2}{*}{\begin{tabular}[c]{@{}c@{}}PSD (hard)\end{tabular}} & SDP & 1.80 & 2.21 & 2.33 & 1.42 \\*
 & SC-SDP & 0.00 & 0.00 & 0.09 & 0.02 \\
\midrule
\multirow{2}{*}{\begin{tabular}[c]{@{}c@{}}SPN (hard)\end{tabular}} & SDP & $\infty$ & $\infty$ & $\infty$ & $\infty$ \\*
 & SC-SDP & 0.00 & 0.00 & 0.09 & 0.02 \\
\end{longtable}
\normalsize

Table~\ref{tab:stqp-relgap-values} highlights the advantage of SC-SDP: the standard SDP relaxation is unbounded below for the COP and SPN families, which gives infinite relative gaps, and remains noticeably weaker on the PSD family. In contrast, SC-SDP gives finite gaps on COP instances and is exact or nearly exact on the PSD and SPN instances.

\subsubsection{Presolving comparison}\label{subsubsec:stqp-presolving-comparison}

We further evaluate the presolving performance of SC-SDP on the sparse standard quadratic program \eqref{eq:stqp-benchmark-model}, which includes both the equality constraint $e^\top x=1$ and the nonnegativity constraint $x\ge0$. The test instances are generated from the same COP, PSD, and SPN families used above. 
{To show that the presolving certificate is not tied to the specialized solver RiNNAL-S, we compute the SC-SDP solution and its dual certificate using MOSEK with $\tol=10^{-8}$.} For these instances, the resulting dual solution is already sufficiently informative, so we do not apply the dual-refinement step.

\begin{figure}[ht!]
    \centering
    \begin{subfigure}[t]{0.32\textwidth}
        \centering
        \IfFileExists{pics/stqp_02_relaxation_fix_total.pdf}
        {\includegraphics[width=\textwidth]{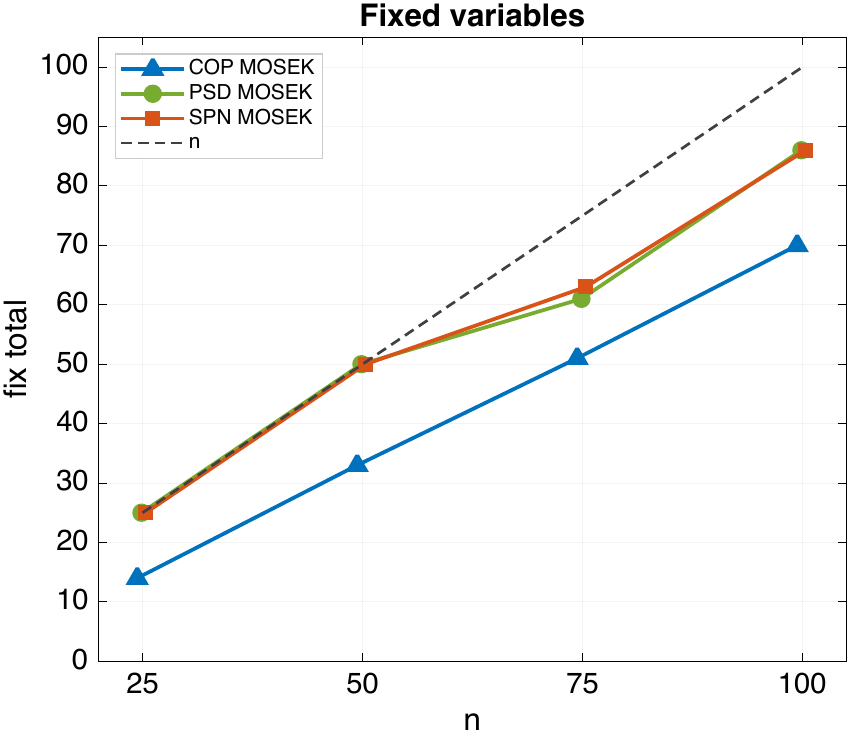}}
        {\fbox{\parbox[c][0.95in][c]{0.9\textwidth}{\centering pending figure}}}
        \caption{Fixed variables.}
        \label{fig:stqp-fixing-fix}
    \end{subfigure}
    \hfill
    \begin{subfigure}[t]{0.32\textwidth}
        \centering
        \IfFileExists{pics/stqp_04_screening_valid_cuts.pdf}
        {\includegraphics[width=\textwidth]{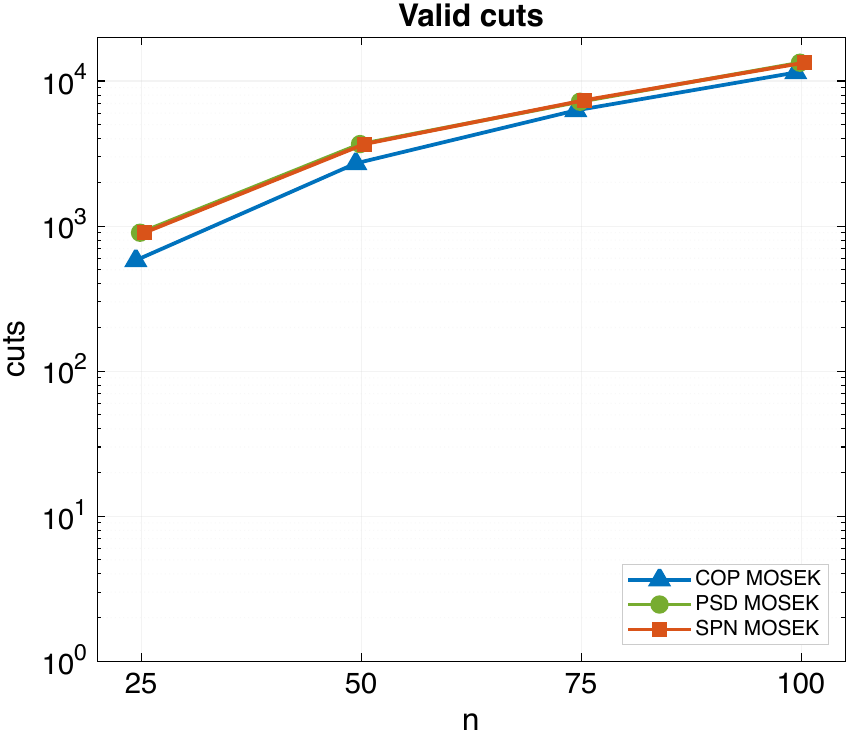}}
        {\fbox{\parbox[c][0.95in][c]{0.9\textwidth}{\centering pending figure}}}
        \caption{Valid cuts.}
        \label{fig:stqp-fixing-valid-cuts}
    \end{subfigure}
    \hfill
    \begin{subfigure}[t]{0.32\textwidth}
        \centering
        \IfFileExists{pics/stqp_05_screening_filtered_cuts.pdf}
        {\includegraphics[width=\textwidth]{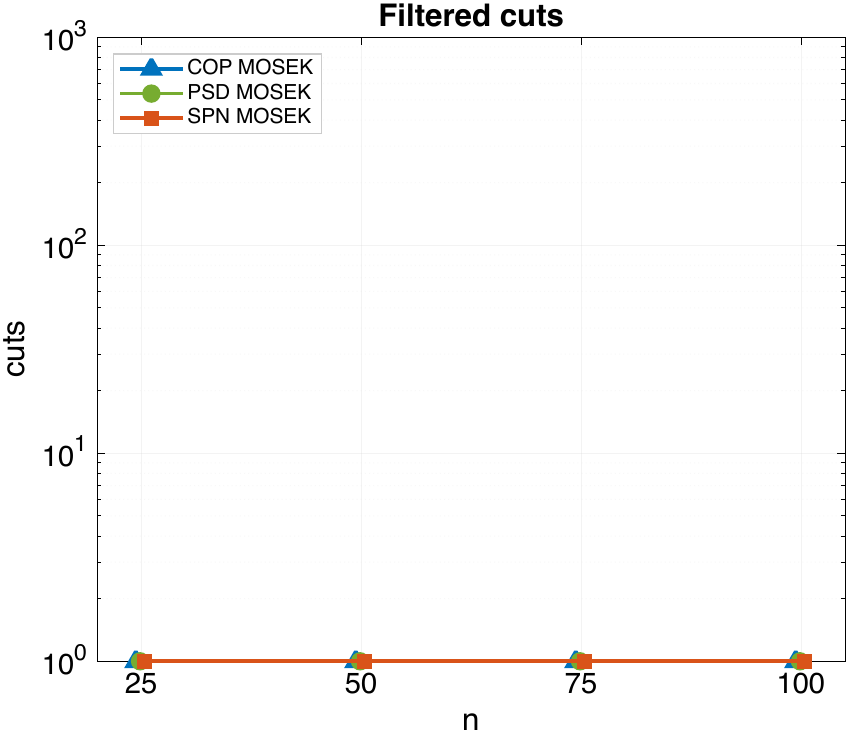}}
        {\fbox{\parbox[c][0.95in][c]{0.9\textwidth}{\centering pending figure}}}
        \caption{Filtered cuts.}
        \label{fig:stqp-fixing-filtered-cuts}
    \end{subfigure}\\[0.8em]
    \begin{subfigure}[t]{0.32\textwidth}
        \centering
        \IfFileExists{pics/stqp_03_relaxation_runtime.pdf}
        {\includegraphics[width=\textwidth]{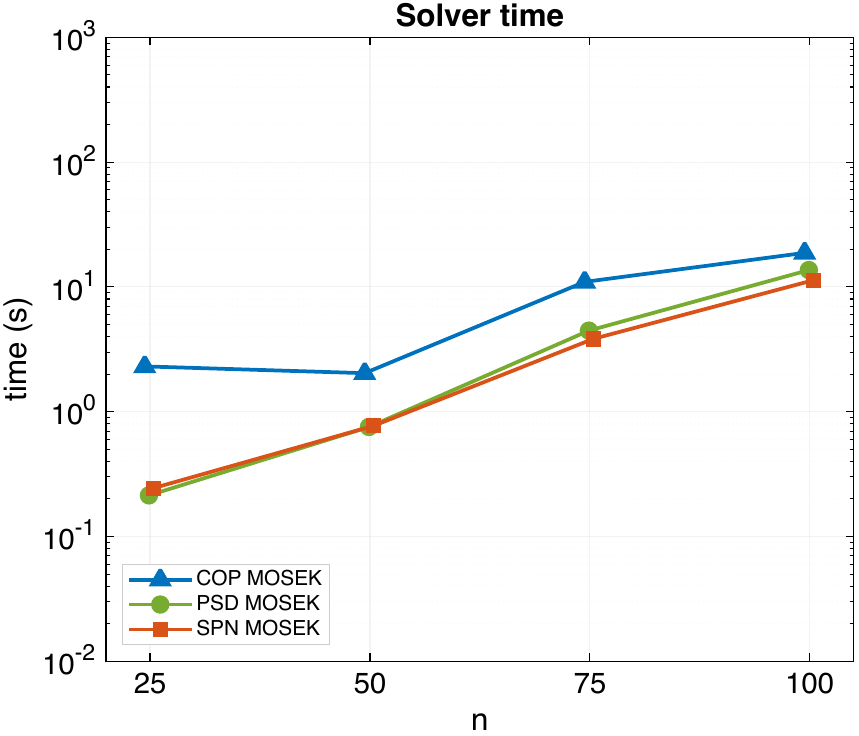}}
        {\fbox{\parbox[c][0.95in][c]{0.9\textwidth}{\centering pending figure}}}
        \caption{Solver time.}
        \label{fig:stqp-fixing-time}
    \end{subfigure}
    \hfill
    \begin{subfigure}[t]{0.32\textwidth}
        \centering
        \IfFileExists{pics/stqp_01_relaxation_relgap.pdf}
        {\includegraphics[width=\textwidth]{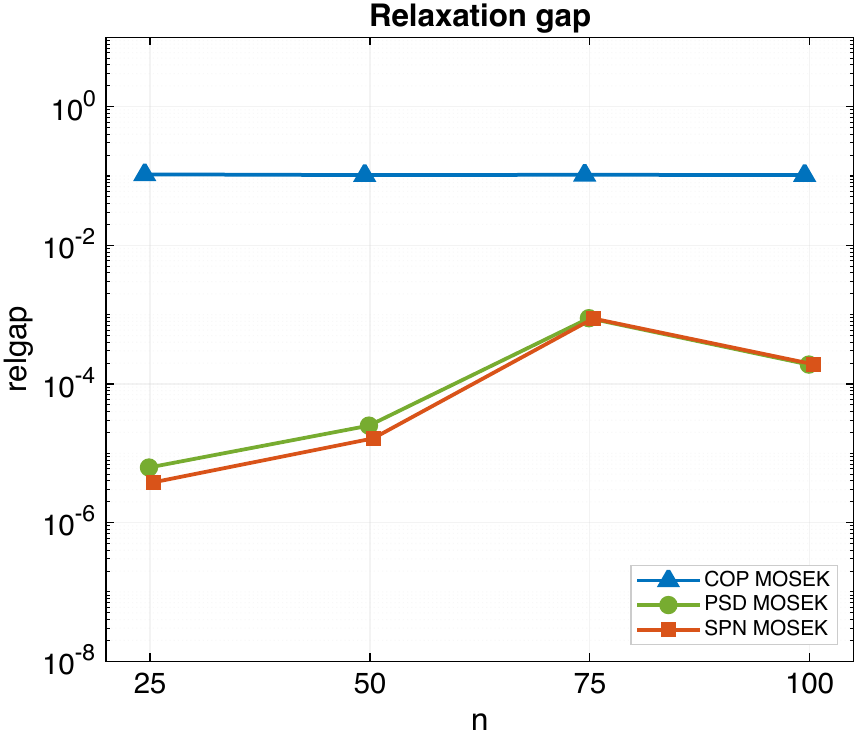}}
        {\fbox{\parbox[c][0.95in][c]{0.9\textwidth}{\centering pending figure}}}
        \caption{Relaxation gap.}
        \label{fig:stqp-fixing-relgap}
    \end{subfigure}
    \hfill
    \begin{subfigure}[t]{0.32\textwidth}
        \centering
        \IfFileExists{pics/stqp_06_gurobi60_relgap.pdf}
        {\includegraphics[width=\textwidth]{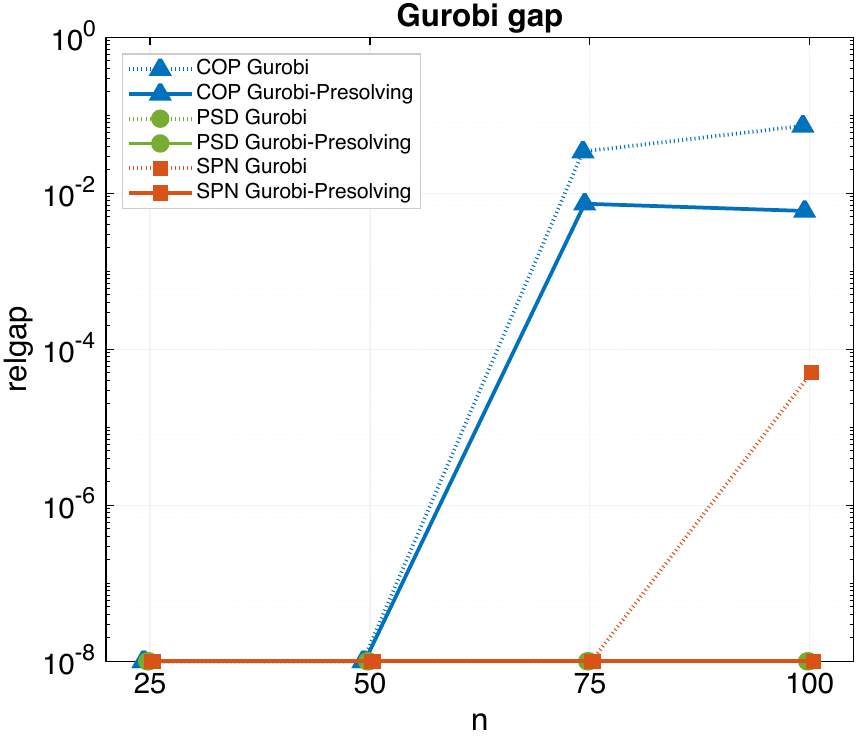}}
        {\fbox{\parbox[c][0.95in][c]{0.9\textwidth}{\centering pending figure}}}
        \caption{Gurobi gap (600s).}
        \label{fig:stqp-fixing-gurobi-gap}
    \end{subfigure}
    \caption{ Presolving and Gurobi performance for \eqref{eq:stqp-benchmark-model}.}
    \label{fig:stqp-fixing}
\end{figure}

Figure~\ref{fig:stqp-fixing} reports the same presolving quantities as in Subsubsection~\ref{subsubsec:srr-presolving-comparison}. Even without dual refinement, the MOSEK certificate yields strong fixing on these constrained sparse QP instances, and this reduction translates into a visibly smaller Gurobi gap within the same time limit. The certificate also produces valid screening cuts, but the filtered-cut count is zero because the variable fixing is already strong enough to imply these pairwise exclusions, leaving no additional nontrivial cuts after filtering.

\subsubsection{RiNNAL-S efficiency}\label{subsubsec:stqp-rinnal-efficiency}

{We next compare RiNNAL-S for solving SC-SDP with reference solvers for solving the SDP--RLT relaxation \eqref{eq:Shor-RLT}.} We compare RiNNAL-S on SC-SDP with the SDP--RLT results reported in \cite{RiNNALplus}. Those results are for the same problem \eqref{eq:stqp-benchmark-model}, but use an SDP--RLT formulation related to \eqref{eq:Shor-RLT} that contains additional RLT constraints and is therefore at least as strong as the SC-SDP relaxation considered here. The SDP--RLT reference models in \cite{RiNNALplus} were solved by RiNNAL+ and SDPNAL+.

\small
\begin{longtable}{llrrccr}
\caption{Computational results for the easy instances of \eqref{eq:stqp-benchmark-model} (COP).}
\label{tab:stqp-solver-results-easy}\\
\toprule
Problem & Algorithm & Iteration & Rank & $R_{\max}$ & Objective & Time \\
\midrule
\endfirsthead
\caption[]{continued from previous page}\\
\toprule
Problem & Algorithm & Iteration & Rank & $R_{\max}$ & Objective & Time \\
\midrule
\endhead
\midrule
\multicolumn{7}{r}{Continued on next page}\\
\endfoot
\bottomrule
\endlastfoot
\multirow{3}{*}{\begin{tabular}[c]{@{}c@{}}COP\\$n=100$\end{tabular}} & RiNNAL-S & 10, 550, 2 & 3   & 5.04e-07 & -1.0195884e-01 & 1.0 \\*
 & RiNNAL+ & 125, 6300, 25 & 102 & 6.64e-07 & -1.0190589e-01 & 30.7 \\
 & SDPNAL+ & 76, 112, 1170 & 102 & 8.59e-07 & -1.0196083e-01 & 40.1 \\
\midrule
\multirow{3}{*}{\begin{tabular}[c]{@{}c@{}}COP\\$n=200$\end{tabular}} & RiNNAL-S & 12, 650, 3 & 3   & 9.63e-07 & -1.0444021e-01 & 2.0 \\*
 & RiNNAL+ & 300, 15050, 60 & 202 & 9.99e-07 & -1.0402158e-01 & 230.5 \\
 & SDPNAL+ & 103, 145, 1511 & 202 & 9.24e-07 & -1.0445084e-01 & 170.7 \\
\midrule
\multirow{3}{*}{\begin{tabular}[c]{@{}c@{}}COP\\$n=500$\end{tabular}} & RiNNAL-S & 15, 800, 3 & 4 & 9.46e-07 & -1.0493236e-01 & 5.4 \\*
 & RiNNAL+ & 672, 33650, 135 & 621 & $1.20\mathrm{e}{-04}^{\dagger}$ & -4.8722936e-01 & 3600.0 \\
 & SDPNAL+ & 200, 227, 2350 & 502 & 9.28e-07 & -1.0466603e-01 & 1779.7 \\
\midrule
\multirow{3}{*}{\begin{tabular}[c]{@{}c@{}}COP\\$n=1000$\end{tabular}} & RiNNAL-S & 20, 1050, 4 & 3 & 8.45e-07 & -1.0530038e-01 & 58.0 \\*
 & RiNNAL+  & - & - & - & - & $>3600.0$  \\
 & SDPNAL+ & - & - & - & - & $>3600.0$ \\
\midrule
\multirow{3}{*}{\begin{tabular}[c]{@{}c@{}}COP\\$n=2000$\end{tabular}} & RiNNAL-S & 37, 1900, 8 & 3 & 4.21e-07 & -1.0542925e-01 & 373.3 \\*
 & RiNNAL+ & - & - & - & - & $>3600.0$ \\
 & SDPNAL+ & - & - & - & - & $>3600.0$ \\
\end{longtable}
\normalsize

Table~\ref{tab:stqp-solver-results-easy} shows that the SC-SDP relaxation solved by RiNNAL-S gives nearly the same objective values as the stronger SDP--RLT reference models solved by RiNNAL+ and SDPNAL+. At the same time, the final ranks produced by RiNNAL-S are much lower than those of the SDP--RLT reference solutions, which further highlights the modeling advantage of SC-SDP. Moreover, RiNNAL-S is roughly 30--700 times faster than RiNNAL+ and 40--300 times faster than SDPNAL+. For the larger COP instances with $n=1000$ and $n=2000$, both SDP--RLT reference solvers exceed the one-hour time limit, whereas RiNNAL-S solves the SC-SDP relaxation in about 1 and 6 minutes, respectively. Overall, these results show that RiNNAL-S keeps essentially the same bound quality while being substantially faster on the easier COP family.

\small
\begin{longtable}{llrrccr}
\caption{Computational results for the hard instances of \eqref{eq:stqp-benchmark-model} (PSD and SPN).}
\label{tab:stqp-solver-results-hard}\\
\toprule
Problem & Algorithm & Iteration & Rank & $R_{\max}$ & Objective & Time \\
\midrule
\endfirsthead
\caption[]{continued from previous page}\\
\toprule
Problem & Algorithm & Iteration & Rank & $R_{\max}$ & Objective & Time \\
\midrule
\endhead
\midrule
\multicolumn{7}{r}{Continued on next page}\\
\endfoot
\bottomrule
\endlastfoot
\multirow{3}{*}{\begin{tabular}[c]{@{}c@{}}PSD\\$n=100$\end{tabular}} & RiNNAL-S & 44, 4400, 1 & 7  & 9.82e-07 & 1.4002781e-02 & 11.7 \\*
 & RiNNAL+ & 374, 18750, 75 & 5  & 8.59e-07 & 1.4174794e-02 & 80.7 \\
 & SDPNAL+ & 801, 1058, 17306 & 2 & 8.50e-07 & 1.4174999e-02 & 468.1 \\
\midrule
\multirow{3}{*}{\begin{tabular}[c]{@{}c@{}}SPN\\$n=100$\end{tabular}} & RiNNAL-S & 119, 6000, 1 & 4  & 8.60e-07 & 1.4007862e-02 & 13.8 \\*
 & RiNNAL+ & 747, 37400, 150 & 107 & 9.31e-07 & 1.3592384e-02 & 204.0 \\
 & SDPNAL+ & 801, 1032, 29187 & 12 & 9.94e-07 & 1.4887842e-02 & 866.1 \\
\midrule
\multirow{3}{*}{\begin{tabular}[c]{@{}c@{}}PSD\\$n=200$\end{tabular}} & RiNNAL-S & 322, 32200, 0 & 44 & 9.82e-07 & 8.9018590e-03 & 137.0 \\*
 & RiNNAL+ & 360, 18050, 72 & 108 & 9.50e-07 & 9.0349366e-03 & 231.0 \\
 & SDPNAL+ & 801, 905, 36057 & 89 & $1.84\mathrm{e}{-06}^{\dagger}$ & 9.3529559e-03 & 3600.0 \\
\midrule
\multirow{3}{*}{\begin{tabular}[c]{@{}c@{}}SPN\\$n=200$\end{tabular}} & RiNNAL-S & 391, 19600, 0 & 8 & 9.28e-07 & 8.9032518e-03 & 74.3 \\*
 & RiNNAL+ & 1812, 90650, 363 & 248 & 9.97e-07 & -1.2530973e-02 & 1556.1 \\
 & SDPNAL+ & 801, 896, 38599 & 25 & $1.02\mathrm{e}{-06}^{\dagger}$ & 1.3190619e-02 & 3600.0 \\
\midrule
\multirow{3}{*}{\begin{tabular}[c]{@{}c@{}}PSD\\$n=500$\end{tabular}} & RiNNAL-S & 347, 34700, 0 & 99 & 9.16e-07 & 2.9152494e-03 & 450.1 \\*
 & RiNNAL+ & 507, 25400, 102 & 234 & 9.88e-07 & 2.9188026e-03 & 1838.0 \\
 & SDPNAL+ & 426, 430, 4726 & 160 & $7.73\mathrm{e}{-06}^{\dagger}$ & 1.0912535e-02 & 3600.0 \\
\midrule
\multirow{3}{*}{\begin{tabular}[c]{@{}c@{}}SPN\\$n=500$\end{tabular}} & RiNNAL-S & 462, 167810, 0 & 15 & 9.78e-07 & 2.9193141e-03 & 3552.5 \\*
 & RiNNAL+ & - & - & - & - & $>3600.0$ \\
 & SDPNAL+ & - & - & - & - & $>3600.0$ \\
\end{longtable}
\normalsize

Table~\ref{tab:stqp-solver-results-hard} reports the harder PSD and SPN families. The comparisons again give nearly identical objective values, confirming that SC-SDP preserves the practical bound quality of SDP--RLT on these instances. These harder families require more iterations and longer runtimes than COP, but RiNNAL-S remains competitive and often much faster than the SDP--RLT reference solvers. In particular, for the SPN instance with $n=500$, both reference solvers exceed the one-hour time limit, while RiNNAL-S solves the SC-SDP relaxation within the limit. Overall, Tables~\ref{tab:stqp-solver-results-easy} and \ref{tab:stqp-solver-results-hard} indicate that SC-SDP achieves a numerical bound similar to that of the stronger SDP--RLT reference relaxation, while being much more efficient to solve.

\subsection{Sparse binary quadratic programs}\label{subsec:numexp-sbqp}
This subsection tests the SC-SDP framework on sparse binary quadratic programs. We consider the following sparse binary quadratic program:
\begingroup
\renewcommand{\theequation}{S-BIQ}
\begin{equation}\label{eq:sbqp-original}    
\min_{x}\;
\left\{
x^\top Q x + 2c^\top x
\;\middle|\;\|x\|_0\le k,\;  x\in\{0,1\}^n
\right\},\tag{S-BIQ}
\end{equation}
\endgroup
where $Q\in\S^n$ and $c\in\R^n$ are given. The binary restriction can be viewed as an additional quadratic equality constraint, since
\[
x\in \{0,1\}^n
\quad\Longleftrightarrow\quad
x_i(1-x_i)=0,\; i\in[n].
\]
In the lifted model, this is simply imposed through the affine equality \(\diag(X)=x\). We also use the valid nonnegative strengthening \(Y\in\mathcal N\), defined in \eqref{eq:nonnegative-cone}, which leads to the SC-SDP relaxation
\begin{equation}\label{eq:sbqp-SDP}
\min_{Y}\;
\Big\{
\langle C,Y\rangle
\;\Big|\; 
\diag(X)=x,\;
Y_{11}=1,\;
Y\in \K\cap \S_+^{n+1}\cap \mathcal{N}
\Big\}.
\end{equation}
We select the test instances from the ORLIB library~\cite{beasley1998heuristic}, maintained by J.E. Beasley, to generate the coefficient matrix $Q$ and vector $c$. We consider four problem dimensions, namely $n=\{250,500,1000,2500\}$, and set the sparsity budget to $k=n/5$. Each dimension includes ten instances, but we report only the first one in this subsection, since the performance on the remaining instances is similar.

\small
\begin{longtable}{llrrccr}
\caption{Computational results for the SC-SDP relaxation of \eqref{eq:sbqp-original}.}
\label{tab:uqp-orlib-results}\\
\toprule
Problem & Algorithm & Iteration & Rank & $R_{\max}$ & Objective & Time \\
\midrule
\endfirsthead
\caption[]{continued from previous page}\\
\toprule
Problem & Algorithm & Iteration & Rank & $R_{\max}$ & Objective & Time \\
\midrule
\endhead
\midrule
\multicolumn{7}{r}{Continued on next page}\\
\endfoot
\bottomrule
\endlastfoot
\multirow{2}{*}{$n = 250$}  & RiNNAL-S & 198, 4040, 52 & 31 & 9.21e-07 & -2.0241801e+04 & 30.3 \\*
                              & MOSEK   & -             & 38 & 6.55e-07 & -2.0241970e+04 & 1033.2 \\
\midrule
\multirow{2}{*}{$n = 500$}  & RiNNAL-S & 130, 2680, 32 & 57 & 2.99e-07 & -5.6538511e+04 & 46.9 \\*
                              & MOSEK   & -             & -  & -        & -               & $>3600.0$ \\
\midrule
\multirow{2}{*}{$n = 1000$} & RiNNAL-S & 189, 3860, 50 & 89 & 8.32e-07 & -1.7015018e+05 & 325.0 \\*
                              & MOSEK   & -             & -  & -        & -               & $>3600.0$ \\
\midrule
\multirow{2}{*}{$n = 2500$} & RiNNAL-S & 198, 4040, 57 & 167 & 6.61e-07 & -7.0768612e+05 & 2768.4 \\*
                              & MOSEK   & -             & -  & -        & -               & $>3600.0$ \\
\end{longtable}
\normalsize

Table~\ref{tab:uqp-orlib-results} shows that RiNNAL-S remains highly effective for the SC-SDP relaxation of \eqref{eq:sbqp-original}: it consistently solves all tested instances within the prescribed tolerance, whereas solving the reduced SDP--RLT relaxation \eqref{eq:Decomposed-SDP-z} by MOSEK
already becomes impractical on the larger instances and exceeds the one-hour time limit for $n\geq 500$. Overall, these results indicate that the proposed algorithm extends effectively to the sparse binary setting while still delivering strong computational performance.

\subsection{Sparse quadratic knapsack problems}
\label{subsec:numexp-sqkp-conflict}

We finally consider a more complicated QCQP to test SC-SDP beyond the unconstrained and simplex-constrained settings. Specifically, we study the sparse quadratic knapsack problem with multiple knapsack constraints and pairwise conflict constraints, which takes the form
\begingroup
\renewcommand{\theequation}{S-QKP}
\begin{equation}\label{eq:sqkp-conflict-model}
\min_{x}
\left\{
x^\top Qx + 2c^\top x
\;\middle|\;
Gx\le d,\;
x_i x_j=0\;\; (i,j)\in E,\;
\ x\in\{0,1\}^n,\;
\|x\|_0\le k
\right\},
\tag{S-QKP}
\end{equation}
\endgroup
where $G\in\R^{p\times n}$ and $d\in\R^p$ define multiple knapsack constraints, and $E$ encodes pairwise conflicts. The constraint $x_i x_j=0$ prevents two incompatible binary variables from being selected simultaneously. 
In the lifted model, the binary constraints and pairwise conflicts are handled as ordinary affine constraints by adding \(\diag(X)=x\) and \(X_{ij}=0\) for \((i,j)\in E\), respectively.

We randomly generate the profit and weight data following the classical
quadratic knapsack procedure of Gallo et al.~\cite{gallo1980quadratic}, which
has been widely used in the literature
\cite{billionnet2004exact,caprara1999exact,pisinger2007quadratic,tang2024feasible}.
The multiple knapsack and conflict constraints are added to test the SC-SDP
relaxation on a richer binary QCQP, in the spirit of generalized and
disjunctively constrained knapsack models
\cite{saracc2014generalized,yamada2002heuristic}. We use \(p=10\) knapsack
constraints, \(k=\lceil 0.2n\rceil\), and quadratic density \(0.25\). Item
profits \(c_j\) and nonzero pairwise profits \(q_{ij}=q_{ji}\) are integers
uniformly drawn from \(\{1,\ldots,100\}\), where each pair \(i<j\) is selected
with probability \(0.25\). Knapsack weights \(G_{rj}\) are integers uniformly
drawn from \(\{1,\ldots,50\}\), and capacities are set to
\(d_r=\lceil 0.5\sum_j G_{rj}\rceil\). Finally, each conflict edge is sampled independently with
density \(\rho\in\{10^{-2},10^{-1}\}\). MOSEK is not reported for this
benchmark: it already encountered memory limitations at the smallest
reported size \(n=500\), while its runtime on smaller pilot instances
exceeded the one-hour time limit. 

\small
\begin{longtable}{crlrrccr}
\caption{Computational results for the SC-SDP relaxation of \eqref{eq:sqkp-conflict-model}.}
\label{tab:sqkp-conflict-results}\\
\toprule
$\rho$ & $n$ & Algorithm & Iteration & Rank & $R_{\max}$ & Objective & Time \\
\midrule
\endfirsthead
\caption[]{continued from previous page}\\
\toprule
$\rho$ & $n$ & Algorithm & Iteration & Rank & $R_{\max}$ & Objective & Time \\
\midrule
\endhead
\midrule
\multicolumn{8}{r}{Continued on next page}\\
\endfoot
\bottomrule
\endlastfoot
$10^{-1}$ & 1000 & RiNNAL-S & 78, 870, 16 & 198 & 9.33e-07 & -1.0932744e+05 & 142.9 \\*
$10^{-1}$ & 2000 & RiNNAL-S & 108, 1170, 22 & 388 & 6.40e-07 & -2.2755262e+05 & 940.1 \\*
$10^{-1}$ & 3000 & RiNNAL-S & 102, 1110, 21 & 580 & 9.95e-07 & -3.4644352e+05 & 3196.9 \\
\midrule
$10^{-2}$ & 500 & RiNNAL-S & 789, 7980, 158 & 33 & 9.99e-07 & -1.5698783e+05 & 536.7 \\*
$10^{-2}$ & 1000 & RiNNAL-S & 360, 3690, 72 & 52 & 9.64e-07 & -4.9705848e+05 & 616.8 \\*
$10^{-2}$ & 1500 & RiNNAL-S & 408, 4170, 82 & 76 & 9.00e-07 & -9.5741504e+05 & 1339.6 \\
\end{longtable}
\normalsize

Table~\ref{tab:sqkp-conflict-results} shows that RiNNAL-S remains effective on
this more general constrained binary QCQP. For \(\rho=10^{-1}\), the reported
instances with \(n=1000,2000,3000\) are all solved within the one-hour time limit; for \(\rho=10^{-2}\),
it still solves the reported instances up to \(n=1500\) within the time limit. These instances are
substantially more complex, since they include multiple knapsack
constraints and pairwise conflict constraints in addition to the sparsity
budget. The final ranks are not extremely small, especially for
\(\rho=10^{-1}\), but RiNNAL-S still reaches the prescribed accuracy and remains
scalable. This demonstrates that the SC-SDP framework and the RiNNAL-S algorithm can be effective for broader classes of sparse QCQPs.

\section{Conclusions}

This paper develops a sparsity-cone semidefinite programming (SC-SDP) framework for sparse quadratic optimization. The proposed relaxation replaces the conventional lifted formulation involving both continuous and binary variables by a compact SDP in the original lifted matrix of size $n+1$, together with a single structured constraint involving the sparsity-cone $\K$. We prove that this compact formulation is equivalent in strength to the SDP--RLT relaxation, while substantially reducing the SDP dimension and simplifying the constraint structure. We also analyze the cones $\mathcal K$ and $\mathcal K^*$, derive an efficient projection operation for $\mathcal K$, and use the SC-SDP dual structure to obtain support-restriction, variable fixing, screening-cut, and dual-refinement certificates.

For computation, we develop RiNNAL-S, a two-phase augmented Lagrangian method that combines a low-rank phase with a convex lifting phase and exploits the structured projections associated with $\mathcal F$ and $\mathcal K$. Numerical experiments on projection subproblems and several classes of sparse quadratic programs show that SC-SDP preserves the bound quality of SDP--RLT while offering significant computational advantages, supporting effective presolving, and often producing much lower-rank solutions. Future work includes integrating these certificates more fully into branch-and-bound algorithms and further exploiting the sparsity-cone geometry for broader sparse QCQP models.

\bibliographystyle{abbrv}
\bibliography{SCSDP}

\clearpage
\appendix

\section{Support restrictions and strong branching}\label{app:support-restrictions}

This section records the restricted-relaxation viewpoint mentioned in Section~\ref{sec:presolving}. Let $S\subseteq[n]$ be the indices fixed to $z_i=1$,  $N\subseteq[n]$  the indices fixed to $z_i=0$, and  $R:=[n]\setminus(S\cup N)$ the remaining indices. In the unconstrained case \eqref{eq:QP}, the corresponding support-restricted subproblem is
\[
\min_x\left\{x^\top Qx+2c^\top x
\;\middle|\;
x_i=0\ \forall i\in N,\;
\|x_R\|_0\le k-|S|
\right\}.
\]
The SC-SDP relaxation is obtained by applying the sparsity-cone constraint only to the active block. Writing $Y=[1,x^\top;x,X]$, this gives
\[
\min\left\{\langle C,Y\rangle
\;\middle|\;
Y_{11}=1,\;
Y\succeq 0,\;
X_{ii}=0\ \forall i\in N,\;
\begin{bmatrix}
(k-|S|)Y_{11} & Y_{1R}\\
Y_{R1} & \Diag(Y_{RR})
\end{bmatrix}\succeq 0
\right\}.
\]
For problems with additional linear constraints, the same construction applies after updating the affine constraints and the active index set. Thus the cone geometry and the projection routines used by RiNNAL-S can be reused at such restricted nodes.

In the unconstrained sparse ridge regression setting, the tightness results in Proposition~\ref{thm:SDP-perspective} and Theorem~\ref{thm:srr-opt-persp} apply directly after the linear restrictions $z_i=1$ for $i\in S$ and $z_i=0$ for $i\in N$. Thus, with the obvious notation, $v^{\mathrm{PR}}_{S,N}\le v^{\mathrm{SC}}_{S,N}=v^{\mathrm{OP}}_{S,N}$. If $v^{\mathrm{SC}}_{S,N}$ exceeds a known upper bound, then the pattern $(S,N)$ cannot occur in an optimal solution, equivalently the no-good cut $\sum_{i\in S}z_i+\sum_{i\in N}(1-z_i)\le |S|+|N|-1$ is valid for all optimal solutions. Strong branching is the special case $S=\emptyset,N=\{p\}$ or $S=\{p\},N=\emptyset$.

\section{Additional projection experiments}\label{app:additional-projection}

This section reports the supplementary projection experiments for $\K\cap\mathcal N$, with \(\mathcal N\) defined in \eqref{eq:nonnegative-cone}, mentioned in Subsection~\ref{subsec:projection-efficiency}. These tests use the same table notation as Table~\ref{tab:projection-benchmark-direct}. The instances for $\K\cap\mathcal{N}$ are generated in the same manner as those for $\K$, except that the feasible seed is adapted to the nonnegative setting: we impose $x\geq 0$ and generate the off-diagonal entries of the lower-right block to be nonnegative before perturbation.

\small
\begin{longtable}{lrrrrr}
\caption{Additional projection results on $\K\cap\mathcal{N}$.}
\label{tab:projection-benchmark-additional}\\
\toprule
Set & $n$ & RiNNAL-S (s) & MOSEK (s) & Speedup & RelErr \\
\midrule
\endfirsthead
\caption[]{continued from previous page}\\
\toprule
Set & $n$ & RiNNAL-S (s) & MOSEK (s) & Speedup & RelErr \\
\midrule
\endhead
\midrule
\multicolumn{6}{r}{Continued on next page}\\
\endfoot
\bottomrule
\endlastfoot
$\K\cap\mathcal{N}$ & 100   & 0.0006 & 17.5666  & 27629    & 3.35e-06 \\
$\K\cap\mathcal{N}$ & 200   & 0.0015 & 458.0648 & 306603   & 5.02e-07 \\
$\K\cap\mathcal{N}$ & 1000  & 0.0228 & $>3600.0$  & --       & -- \\
$\K\cap\mathcal{N}$ & 10000 & 5.0250 & $>3600.0$  & --       & -- \\
\end{longtable}
\normalsize

The results are consistent with the discussion in Subsection~\ref{subsec:projection-efficiency}. The results for $\K\cap\mathcal N$, where \(\mathcal N\) is defined in \eqref{eq:nonnegative-cone}, closely mirror those for $\K$, as expected from the shared one-dimensional reduction.

\end{document}